\def\ps@pprintTitle{%
	\let\@oddhead\@empty
	\let\@evenhead\@empty
	\let\@oddfoot\@empty
	\let\@evenfoot\@empty
}
\newtheorem{thm}{Theorem}[section]
\newtheorem{prop}[thm]{Proposition}
\newtheorem{lem}[thm]{Lemma}
\newtheorem{cor}[thm]{Corollary}
\newtheorem{defin}[thm]{Definition}
\newtheorem{rem}[thm]{Remark}
\newtheorem{con}[thm]{Construction}
\newtheorem{nota}[thm]{Notation}
\newcommand{\hs}{\hookrightarrow}
\DeclareMathOperator{\PG}{PG}
\DeclareMathOperator{\AG}{AG}
\DeclareMathOperator{\rank}{rank}
\DeclareMathOperator{\CAN}{CAN}
\DeclareMathOperator{\CA}{CA}
\DeclareMathOperator{\OA}{OA}
\DeclareMathOperator{\Tr}{Tr}
\DeclareMathOperator{\ACM}{ACM}
\DeclareMathOperator{\ACTM}{ACTM}
\DeclareMathOperator{\CPHF}{CPHF}
\DeclareMathOperator{\SCPHF}{SCPHF}
\journal{Journal of Combinatorial Theory, Series A}
\begin{document}

\begin{frontmatter}

\title{Existence of $3$ anti-cocircular truncated M{\"o}bius planes and constructions of strength-4 covering arrays}

\author[1]{Kianoosh Shokri} 

\affiliation[1]{organization={Department of Mathematics and Statistics, University of Ottawa},
            city={Ottawa},
            state={Ontario},
            country={Canada}}

\author[2]{Lucia Moura} 

\affiliation[2]{organization={School of Electrical Engineering and Computer Science, University of Ottawa},
            city={Ottawa},
            state={Ontario},
            country={Canada}}

\author[3]{Brett Stevens} 

\affiliation[3]{organization={School of Mathematics and Statistics, Carleton University},%
            city={Ottawa},
            state={Ontario},
            country={Canada}}

\begin{abstract}

Two projective (affine) planes with the same point sets are \emph{orthogoval} if the common intersection of any two lines, one from each, has size at most two. The existence of a pair of orthogoval projective planes has been proven and published independently many times. 
A strength-$t$ \emph{covering array}, denoted by $\CA(N; t, k, v)$, is an $N \times k$ array over a $v$-set such that in any $t$-set of columns, each $t$-tuple occurs at least once in a row. 
A pair of orthogoval projective planes can be used to construct a strength-$3$ covering array $\CA(2q^3-1; 3, q^2 + q + 1, q)$. Our work extends this result to construct arrays of strength $4$. A \emph{$k$-cap} in a projective geometry is a set of $k$ points no three of which are collinear.
In $\PG(3,q)$, an \emph{ovoid} is a maximum-sized $k$-cap with $k =q^2+1$. Its plane sections (circles) are the blocks of a $3$-$(q^2 + 1, q + 1, 1)$ design, called a \emph{M{\"o}bius plane} of order $q$. For $q$ an odd prime power, we prove the existence of three truncated M{\"o}bius planes, such that for any choice of these circles, one from each plane, their intersection size is at most three. From this, we construct  
a strength-$4$ covering array $\CA(3q^4-2; 4, \frac{q^2+1}{2}, q)$. 
For $q \geq 11$, these covering arrays improve the size of the best-known covering arrays with the same parameters by almost 25 percent. The $\CA(3q^4 -3; 4, \frac{q^2 +1}{2}, q)$ is used as the main ingredient in a recursive construction to obtain a $\CA(5q^4 - 4q^3 - q^2 + 2q; 4, q^2 +1, q)$. Some improvements are obtained in the size of the best-known arrays using these covering arrays.

\end{abstract}

\begin{keyword}
 projective geometry \sep orthogoval \sep ovoid  \sep M{\"o}bius plane \sep covering array

\end{keyword}

\end{frontmatter}
\section{Introduction}

In a projective plane $\PG(2,q)$, an \emph{oval} is a set of $q+1$ points, no three of which are collinear. Two projective (affine) planes with the same point sets are \emph{orthogoval} if a line in one is an oval in another \cite{Colbourn2022Brett}. Equivalently, two projective (affine) planes with the same point sets are orthogoval if the common intersection of any line from each has size at most two. A set of planes is called a \emph{set of mutually orthogoval planes} if the planes are pairwise orthogoval. The existence of orthogoval projective planes is proved independently in \cite{Bruck1973, Glynn1978,Hall1947,Dieter1984,Raaphorst2014,Veblen}, and non-independent novel proofs are published in \cite{Baker1994,Colbourn2022Brett,Panario2020}. 
Colbourn et al. \cite{Colbourn2022Brett} provide a new proof for the existence of a pair of orthogoval projective planes and show the existence of orthogoval affine planes for even $q$ and $q=3$. Only when $q =3$ can a pair of orthogoval affine planes be extended to a pair of orthogoval projective planes. However, it is possible to remove $2(q-1)$ points from a pair of orthogoval projective planes for any prime power $q$ to obtain truncated orthogoval affine planes \cite{torres2018}. The methods used to construct a pair of orthogoval projective planes in the literature are diverse. For example, Raaphorst et al. \cite{Raaphorst2014} use a primitive polynomial and its reciprocal to construct a pair of orthogoval projective planes, while Baker et al. \cite{Baker1994} use primitive elements in $\mathbb{F}_{q^3}$; both use $\alpha$ to generate the points of the first projective plane, while the second progective plane uses $\alpha^{-1}$ for any prime power $q$ \cite{Baker1994, Raaphorst2014}, and $\alpha^{2}$ or $\alpha^{\frac{1}{2}}$ for an odd prime power $q$ \cite{Baker1994}. An extension of the $\alpha^{-1}$ method is studied in \cite{Panario2020}, where $\alpha$ is a root of an arbitrary polynomial, not necessarily primitive.  

A pair of orthogoval projective (affine) planes can be used to construct a strength-$3$ covering array (see \cref{subsec:ca} for the definition) $\CA(2q^3 -1; 3, q^2 +q +1,q)$ \cite{Raaphorst2014}. The possibility of generalizing or extending the definition of orthogoval planes to other geometric objects in higher dimensions, such as \emph{M{\"o}bius planes}, is mentioned in  \cite{Colbourn2022Brett}. A strength-$4$ covering array $\CA(511; 4, 17, 4)$ obtained in \cite{Tzanakis2016} suggested a connection with \emph{ovoids} in $\PG(3,q)$ for $q=4$. An ovoid in $\PG(3,q)$ is a set of $q^2 +1$ points, no three of which are collinear. 
A \emph{$t$-$(v, k, \lambda)$ design} is a pair $(X, \mathcal{B})$ where $X$ is a $v$-set of points and $\mathcal{B}$ is a collection of $k$-subsets of $X$ (blocks) with the property that every $t$-subset of $X$ is contained in exactly $\lambda$ blocks. The parameter $\lambda$ is the \emph{index} of the design. The plane sections of the ovoid are the blocks of a $3$-$(q^2 +1 , q+1, 1)$ design, also called a M{\"o}bius plane \cite{okeefe1996}. 
Let $(\mathcal{X}, \mathcal{B})$ be a $t$-$(v, k, \lambda)$ design. A \emph{truncated} design is obtained from $(\mathcal{X}, \mathcal{B})$ by removing some of its points, $\mathcal{P} \subseteq \mathcal{X}$, and getting $(\mathcal{X'} = \mathcal{X} \setminus \mathcal{P} , \mathcal{B'} = \{ B \cap \mathcal{X'}: B \in \mathcal{B}\} )$. 
We provide an extension of the definition of orthogoval planes. A set of $m$ (truncated) M{\"o}bius planes on the same point set is \emph{anti-cocircular} if the common intersection of any choice of $m$ blocks one from each of the planes is at most size three. In other words, any four points cannot be simultaneously contained in $m$ circles, one in each of the M{\"o}bius planes. A set of $m$ anti-cocircular M{\"o}bius planes of order $q$ is denoted by $\ACM(m,q)$. If those M{\"o}bius planes are truncated, we denote it by $\ACTM(m,q)$. 
In this paper, we have two objectives. 
First, we show the existence of an $\ACTM(3,q)$; and second, we build a $\CA(3q^4 -2; 4, \frac{q^2 +1}{2}, q)$ for any odd prime power $q$ by vertically concatenating three strength-$3$ covering arrays. Each strength-$3$ covering array is obtained by a generator matrix whose columns correspond to one of the three truncated M{\"o}bius planes. For $q \geq 11$, this covering array improves the size of the best-known covering arrays with the same parameters by almost 25 percent (see \cref{table:4ca3layer}). \cref{tab:sum} shows the geometric objects and corresponding constructed covering arrays. 

\begin{table}[h]
    \centering
    \scalebox{0.72}{
    \begin{tabular}{||c|c|c|c|c||}
    \hline \hline
        $q$ & Geometric object & Corresponding CA & References\\
         \hline \hline 
        All prime powers & Pair of orthogoval $\PG(2,q)$ &$\CA(2q^3 -1; 3, q^2 + q +1, q)$ & \cite{Bruck1973, Glynn1978,Hall1947,Dieter1984,Raaphorst2014,Veblen}\\
         \hline 
        Even prime powers & Pair of orthogoval $\AG(2,q)$ & $\CA(2q^3 -q; 3, q^2 + 2, q)$ & \cite{Colbourn2022Brett}\\
         \hline 
        All prime powers & Pair of truncated orthogoval $\AG(2,q)$ & $\CA(2q^3 -q; 3, q^2 - q +3, q)$& \cite{torres2018}\\
         \hline 
         $q = 3$ & Set of 4 mutually orthogoval $\PG(2,3)$ & $\CA_{\lambda}(27\lambda + 26 ; 3, 13 , 3)$, $\lambda < 4$& \cite{Colbourn2022Brett}\\
          \hline 
          $q = 3$ & Set of 7 mutually orthogoval $\AG(2,3)$ & $\CA_{\lambda}(27\lambda + 24 ; 3, 11 , 3)$, $\lambda < 7$& \cite{Colbourn2022Brett}\\
          \hline 
          $q = 2^n$ , $gcd(n, 6) = 1$ & Triple of mutually orthogoval $\AG(2,q)$ & $\CA_{\lambda}(\lambda q^3 + q^3 - q ; 3, q^2 +2 , q)$, $\lambda < 3$& \cite{Colbourn2022Brett}\\
          \hline 
          $q = 4$ & Set of $7$ mutually orthogoval $\AG(2,4)$ & $\CA_{\lambda}(64\lambda + 60 ; 3, 18 , 4)$, $\lambda < 7$& \cite{Colbourn2022Brett}\\
          \hline 
          $q = 8$& Set of $7$ mutually orthogoval $\AG(2,8)$ & $\CA_{\lambda}(512\lambda + 504; 3, 66 , 8)$, $\lambda < 7$& \cite{Colbourn2022Brett}\\
         \hline 
         
        $q = 4$ & $\ACM(2,4)$  & $\CA(511; 4, 17, 4)$ & \cite{Tzanakis2016}\\
         \hline 
        Odd prime powers & $\ACTM(3,q)$ & $\CA(3q^3 -2; 4, \frac{q^2 +1}{2}, q)$ & \cref{orthogovoid}, \cref{4ca3layer}\\
         \hline \hline 
    \end{tabular}}
    \caption{Different geometric objects and corresponding constructed covering arrays}
    \label{tab:sum}
\end{table}
In the construction of the $\CA(3q^4 -2; 4, \frac{q^2 +1}{2}, q)$, we use $\frac{q^2 +1}{2}$ points of the ovoid (half points of the ovoid). We observed that half ovoids have been used in other contexts, such as the construction of a family of projective two-weight codes (strongly regular Cayley graphs) in an unpublished work \cite{wilson1997}.

Extending our construction to employ all $q^2 +1$ points of the ovoid is feasible using additional geometry and recursive constructions.
Using these, we construct a $\CA(5q^4 - 4q^3 - q^2 + 2q; 4, q^2 +1, q)$ for any odd prime power $q$. These covering arrays improve the size of many covering arrays with the same parameters. 

The structure of this paper is as follows. In \cref{sec:bg}, we review the background in finite geometry, covering arrays, and their connections. In \cref{sec:orthogoval}, we review the existence of pairs of orthogoval projective planes in the literature and their role in constructing strength-$3$ covering arrays.
In \cref{sec:orthogovoid}, we give results connecting blocks of truncated M{\"o}bius planes with hypersurfaces in $\PG(3,q)$. We prove the existence of an $\ACTM(3,q)$ for any odd prime power $q$. 
In \cref{sec:4ca3layer}, we prove the existence of a $\CA(3q^4 -2; 4, \frac{q^2 + 1}{2}, q)$ obtained from the vertical concatenation of three strength-$3$ covering arrays obtained by an $\ACTM(3,q)$ for any odd prime power $q$. In \cref{sec:rec4-ca}, we prove the existence of a $\CA(5q^4 - 4q^3 - q^2 + 2q; 4, q^2 +1, q)$ using a recursive method with the $\CA(3q^4 -2; 4, \frac{q^2 +1}{2}, q)$ as the main ingredient. In \cref{sec:future}, we discuss potential future works. 

\section{Necessary Background on Finite Geometry and Covering Arrays} \label{sec:bg}
In this section, we provide the necessary background on finite geometry and its connections with combinatorial designs, especially with covering arrays. 
\subsection{Background on geometry}
We start this section with the basics of finite fields. 
Let $q=p^m$, where $p$ is a prime and $m \geq 1$, and let $\mathbb{F}_{q}$ denote the \emph{finite field} of $q$ elements.
An element $w \in \mathbb{F}_{q}$ is \emph{primitive} if $w$ generates the multiplicative group of $\mathbb{F}_{q}$: $\mathbb{F}_{q} = \{ 0, 1, w, w^{2}, \ldots , w^{q -2}\}$.
A polynomial $f \in \mathbb{F}_p[x]$ of degree $m$ is \emph{primitive} if $f$ is monic, and has a root $\alpha \in \mathbb{F}_{p^m}$ that is primitive.

\begin{defin} \label{defin:trace}
	The \emph{trace function}  is defined as follows: 
    \begin{equation}
	\begin{aligned}
		\Tr : & \ \mathbb{F}_{q^m} \rightarrow \mathbb{F}_{q},\\ 
		 & \Tr(a) =  a + a^q + a^{q^2} + \cdots + a^{q^{m-1}}. 
	\end{aligned}
	\end{equation}
	The trace function is $\mathbb{F}_q$-linear which means that for all $s, t \in \mathbb{F}_q$ and all $a, b \in \mathbb{F}_{q^m}$, $\Tr(sa+tb) = s\Tr(a) + t\Tr(b)$.
\end{defin}

\begin{thm} [{\cite[Theorem 4.11]{golomb_gong_2005}}] \label{p1lfsr}
	Let $f(x) = x^m + \sum_{j = 1}^{m} b_j x^{m-j}$ be a degree-$m$ primitive polynomial over $\mathbb{F}_q$ with root $\alpha$. Then, for any tuple $T = (a_0, \ldots , a_{m-1})$ of initial values, there exists a unique element $\beta \in \mathbb{F}_{q^m}$ such that $a_i = \Tr(\beta \alpha^i)$ for all $ 0 \leq i \leq m-1$. Furthermore, since the trace is $\mathbb{F}_q$-linear, it follows that the recurrence relation $a_i = \gamma_n = - \sum_{j  =1}^{n} b_j \gamma_{n-j}$ with an arbitrary non-zero tuple $T$ of initial values has the property that for all $i \geq 0$, $a_i = \Tr(\beta \alpha ^i)$.
\end{thm}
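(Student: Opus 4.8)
The plan is to derive both assertions from two standard facts about the extension $\mathbb{F}_{q^m}/\mathbb{F}_q$: that $\{1,\alpha,\dots,\alpha^{m-1}\}$ is an $\mathbb{F}_q$-basis of $\mathbb{F}_{q^m}$, since the primitive (hence irreducible) polynomial $f$ is the minimal polynomial of $\alpha$ over $\mathbb{F}_q$; and that the symmetric $\mathbb{F}_q$-bilinear form $(x,y)\mapsto Tr(xy)$ on $\mathbb{F}_{q^m}$ is non-degenerate. The latter holds because $Tr$ is not the zero map (as a polynomial in $a$ it has leading term $a^{q^{m-1}}$ and degree $q^{m-1}<q^m$, so it cannot vanish on all of $\mathbb{F}_{q^m}$), and for a nonzero $\mathbb{F}_q$-linear functional $\mathbb{F}_{q^m}\to\mathbb{F}_q$ this non-vanishing is equivalent to non-degeneracy of the associated form. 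I would take these two facts as given and cite them rather than reprove them.

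For existence and uniqueness of $\beta$, consider the $\mathbb{F}_q$-linear map $\Phi:\mathbb{F}_{q^m}\to\mathbb{F}_q^m$ defined by $\Phi(\beta)=\bigl(Tr(\beta),Tr(\beta\alpha),\dots,Tr(\beta\alpha^{m-1})\bigr)$. Both source and target have dimension $m$ over $\mathbb{F}_q$, so it is enough to show $\Phi$ is injective. If $\Phi(\beta)=0$ then $Tr(\beta\alpha^i)=0$ for $0\le i\le m-1$, hence by $\mathbb{F}_q$-linearity of $Tr$ and the basis property above, $Tr(\beta x)=0$ for every $x\in\mathbb{F}_{q^m}$; non-degeneracy of the trace form then forces $\beta=0$. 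Thus $\Phi$ is a bijection, and the required $\beta$ is the unique preimage $\Phi^{-1}(a_0,\dots,a_{m-1})$; moreover $\beta\neq0$ whenever the initial tuple is non-zero.

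For the recurrence assertion, I would first verify that the sequence $s_i:=Tr(\beta\alpha^i)$, $i\ge0$, satisfies the recurrence attached to $f$. Since $\alpha$ is a root of $f$ we have $\alpha^m=-\sum_{j=1}^m b_j\alpha^{m-j}$; multiplying by $\beta\alpha^{\,n-m}$ for any $n\ge m$, applying $Tr$, and using $\mathbb{F}_q$-linearity yields
\[
  s_n=Tr(\beta\alpha^n)=Tr\Bigl(\beta\alpha^{\,n-m}\bigl(-\sum_{j=1}^m b_j\alpha^{m-j}\bigr)\Bigr)=-\sum_{j=1}^m b_j\,Tr(\beta\alpha^{\,n-j})=-\sum_{j=1}^m b_j\,s_{n-j}.
\]
Hence $(s_i)_{i\ge0}$ obeys the same order-$m$ linear recurrence. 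Finally, if $(a_i)_{i\ge0}$ is the sequence generated from a non-zero initial tuple $T$ by that recurrence, then $(a_i)$ and $(s_i)$ agree for $0\le i\le m-1$ by the first step, and a one-line induction on $i$ propagates the agreement to every $i\ge0$, giving $a_i=Tr(\beta\alpha^i)$ for all $i$. The whole argument is linear algebra together with this induction; the only non-elementary ingredient is the non-degeneracy of the trace form, so I do not anticipate a genuine obstacle.
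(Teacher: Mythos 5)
The paper gives no proof of this statement --- it is imported verbatim as Theorem 4.11 of Golomb and Gong --- so there is no in-paper argument to compare against; your proof stands on its own and is correct. It is the standard argument: non-degeneracy of the trace form (which you justify correctly by a degree count) makes $\beta \mapsto (Tr(\beta), Tr(\beta\alpha), \ldots, Tr(\beta\alpha^{m-1}))$ an injective, hence bijective, $\mathbb{F}_q$-linear map between spaces of dimension $m$, and pushing the relation $\alpha^m = -\sum_{j=1}^{m} b_j \alpha^{m-j}$ through $Tr(\beta\alpha^{n-m}\cdot\,)$ shows the trace sequence obeys the order-$m$ recurrence, after which induction from the matching initial segment finishes the job. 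Two minor observations: the recurrence as printed in the theorem statement ($\gamma_n = -\sum_{j=1}^{n} b_j \gamma_{n-j}$, with the sum running to $n$) is garbled, and you correctly work with the intended order-$m$ recurrence; and your argument never uses primitivity of $f$ beyond the irreducibility it implies, so you have in fact proved the slightly more general statement for any irreducible $f$.
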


\begin{rem} \label{subprim}
    Let $\alpha$ be a primitive element in $\mathbb{F}_{q^m}$. Since $ q -1 \ | \ q^m -1$, then $\alpha^{\frac{q^m -1}{q-1}}$ is a primitive element in $\mathbb{F}_q$. 
\end{rem}

  A \emph{finite projective space} (or finite projective geometry) $P$ is a finite set of \emph{points} together with subsets of points called \emph{lines} satisfying the following properties:
        (1) any two distinct points are on exactly one line; 
        (2) let $A$, $B$, $C$, $D$ be four distinct points of which no three are colinear. If the lines $AB$ and $CD$ intersect each other, then the lines $AD$ and $BC$ also intersect each other; 
        (3) any line has at least three points. 
Let $V$ be a vector space of dimension $m$ over the finite field $\mathbb{F}_q$ for a prime power $q$. The geometry $\PG(m-1, q)$ that has its points the $1$-dimensional subspaces of $V$ and as its lines the $2$-dimensional subspaces of $V$ is a finite projective space of dimension $m-1$. There are $\frac{q^{m} -1}{q-1}$ points and $\frac{q^{m-1}}{q-1}$ lines with $\frac{q^{m-2}-1}{q-1}$ points each in $\PG(m-1, q)$. 
We present a point in $\PG(m-1,q)$ in \emph{homogeneous coordinates} $(a_0 : a_1 : \dots : a_{m-1}) = \left \{ (ba_0 , ba_1 , \ldots , ba_{m-1}) : b \in \mathbb{F}_{q} \right \}$.
Let $P$ be a projective space of dimension at least two. A \emph{collineation} of $P$ is a bijective map $\alpha$ on the point set of $P$ that preserves collinearity; that is, $p$, $q$, $r$ collinear implies $\alpha(p)$, $\alpha(q)$, $\alpha(r)$ collinear. 
A \emph{$k$-cap} of $\PG(m-1, q)$ is a set of
$k$ points of $\PG(m-1, q)$, no three of which are collinear. A $k$-cap in $\PG(2, q)$ is called a \emph{$k$-arc}. In $\PG(3,q)$, an \emph{ovoid} is a $k$-cap with maximum size. 
Since no three points of a $k$-cap $\mathcal{K}$ of $\PG(m-1,q)$ are collinear, the lines of $\PG(m-1,q)$
fall into three classes with respect to $\mathcal{K}$. An \emph{external line} contains no point of $\mathcal{K}$,
a \emph{tangent line} contains exactly one point of $\mathcal{K}$, and a \emph{secant line} contains exactly two
points of $\mathcal{K}$. 

Bose \cite{Bose1947} for odd $q > 2$, Seiden \cite{Seiden1950} for $q = 4$, and Qvist \cite{Qvist1952} for all even $q > 2$ showed that the maximum value of $k$ such that there exist a $k$-cap (ovoid) in $\PG(3, q)$ is $q^2 + 1$.
Barlotti \cite{barlotti1955} and Panella \cite{panella1955} show that for an ovoid $\mathcal{K}$ in $\PG(3,q)$ with $q > 2$: (1) at each point $P$ of $\mathcal{K}$, the $q + 1$ tangent lines to  $\mathcal{K}$ lie in a plane, called the tangent plane to $\mathcal{K}$ at $P$. Thus $\mathcal{K}$ admits $q^2 + 1$ \emph{tangent planes}; (2) every plane of $\PG(3, q)$ is either a tangent plane or else meets $\mathcal{K}$ in a $(q + 1)$-arc, in which case it is called a \emph{secant plane} of $\mathcal{K}$.  
The points and hyperplanes of $\PG(m-1, q)$ form a $2$-$(\frac{q^m-1}{q-1}, \frac{q^{m-1}-1}{q-1}, \frac{q^{m-2}-1}{q-1})$ design. 
 A \emph{M{\"o}bius plane} of order $q$ is a $3$-$(q^2 + 1, q + 1, 1)$ design. Given an ovoid $\mathcal{K}$ in $\PG(3, q)$, the incidence structure with \emph{points} the points of the ovoid, \emph{blocks} or
\emph{circles} the secant plane sections of the ovoid, and the incidence that of $\PG(3, q)$ is a
M{\"o}bius plane plane of order $q$. O'Keefe \cite{okeefe1996} provided a survey about ovoids in $\PG(3,q)$, where classifications and characterization of ovoids are studied. 

In \cref{Gengen}, a generator matrix is given where its columns correspond to $c$ distinct points of $\PG(m-1, q)$.

\begin{con}\label{Gengen}
Let $\mathbb{F}_{q^m}$ be a finite field for a prime power $q$ and $\alpha$ a primitive element of $\mathbb{F}_{q^m}$. Let $L(\alpha^{j})$ denote the tuple representation of $\alpha^{j}$ for the basis $\lbrace \alpha^0, \alpha^1, \ldots, \alpha^{m-1} \rbrace$ of $\mathbb{F}^m_{q} \cong \mathbb{F}_{q^m}$ a vector space over $\mathbb{F}_q$, i.e. the vector $\left[c_0, c_1, \ldots, c_{m-1}\right]^T$ where $\alpha^j = \sum_{n =0}^{m-1} c_n \alpha^n$.

	Let $G^{l}_{c}$ be the matrix where the $i$th column is $L(\alpha^{li})$ for $0 \leq i < c$, where $ 0 < c \leq \frac{q^m -1}{q-1}$, and~$l$ is a positive integer,  
	
	\begin{equation}
		G^{l}_{c} := \begin{bmatrix}
			L(\alpha^0) & L(\alpha^{l}) & L(\alpha^{2l}) & \ldots & L(\alpha^{(c-1)l}) \\
		\end{bmatrix}.
	\end{equation}
	
	The array $A(G^{l}_{c})$ consists of each row in the span of $G^{l}_{c}$ is a $q^m \times c$ matrix, i.e. $A(G_c^l)$ is the array generated by the linear combination of rows of $G_c^l$ over the elements of $\mathbb{F}_q$.
\end{con}  

Let $c = \frac{q^m -1}{q-1}$. As observed in  \cite{Panario2020}, the array $G_c^1$ has the maximum number of columns that are not multiples of each other, since the power $\frac{q^m - 1}{q-1}$ is the smallest non-zero power such that $\alpha^{\frac{q^m - 1}{q-1}} \in \mathbb{F}_q$. It is easy to see that $G_c^1$ has rank $m$ and no two columns of $G_c^1$ are multiples of each other.
Since any two columns of $G^1_c$ are linearly independent and there are $\frac{q^m -1}{q-1}$ columns, then the columns are in correspondence with the points of $\PG(m-1, q)$. Let $z = \frac{q^{m-1} - 1}{q -1}$ and $S = \lbrace i_1, i_2, \ldots i_z \rbrace$ be a set of column indices of $A = A(G_c^1)$ with $A_{r,i_1} = \cdots = A_{r,i_z} = 0$ for some non-zero row $r$ of $A(G_c^1)$. The points of $\PG(m-1, q)$ corresponding to $S$ consist of the $z$ points contained in a hyperplane of $\PG(m-1, q)$. 
In the main results of this paper, we use $G^{q+1}_{q^2+1}$ in \cref{Gengen} for $m=4$. We are interested in $G^{q+1}_{q^2+1}$, since the columns of $G^{q+1}_{q^2+1}$ correspond to  points of an ovoid in $\PG(3, q)$ (\cref{genovoid}) and its plane sections corresponds to a M{\"o}bius plane.

\begin{prop} [{\cite[Theorem 3]{Ebert1985}}] \label{genovoid}
    Let $q$ be a prime power and $m = 4$. Let $O = \lbrace L(\alpha^{x(q+1)}) : 0 \leq x < q^2 + 1 \rbrace$ be the set of the columns of $G^{q+1}_{q^2+1}$. Then, the set $O$ is an ovoid in $\PG(3,q)$. 
\end{prop}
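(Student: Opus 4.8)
\emph{Proof strategy.} The plan is to identify $O$ with the point set of an elliptic quadric $Q^{-}(3,q)$ of $PG(3,q)$; since an elliptic quadric is a cap and caps in $PG(3,q)$ have at most $q^2+1$ points (Bose, Seiden, Qvist, cited above), this forces $O$ to be an ovoid.

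First I would check that the $q^2+1$ field elements $\alpha^{x(q+1)}$ (the value $x=q^2+1$ merely repeats the point $x=0$, since $\alpha^{(q^2+1)(q+1)}=\alpha^{(q^4-1)/(q-1)}\in\mathbb F_q^{\ast}$) represent pairwise distinct points. Put $\beta:=\alpha^{q+1}$. As $\gcd(q+1,q^4-1)=q+1$, the order of $\beta$ is $(q^4-1)/(q+1)=(q-1)(q^2+1)$, and $\beta^{q^2+1}=\alpha^{(q^4-1)/(q-1)}$ is a primitive element of $\mathbb F_q$ by \cref{subprim}, so $\langle\beta^{q^2+1}\rangle=\mathbb F_q^{\ast}$. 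Hence $\beta^{a}$ and $\beta^{b}$ are proportional over $\mathbb F_q$ exactly when $a\equiv b\pmod{q^2+1}$, so $|O|=q^2+1$ and the cone $\widehat O:=\langle\beta\rangle\cup\{0\}$ over $O$ has exactly $(q-1)(q^2+1)+1=q^3-q^2+q$ vectors.

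Next, using the tower $\mathbb F_q\subset\mathbb F_{q^2}\subset\mathbb F_{q^4}$ and writing $N:=N_{\mathbb F_{q^4}/\mathbb F_{q^2}}$, I would fix a nonzero $\delta$ in the $1$-dimensional $\mathbb F_q$-kernel of $\mathrm{Tr}_{\mathbb F_{q^2}/\mathbb F_q}$ and set
\[
Q_{\delta}:\mathbb F_{q^4}\to\mathbb F_q,\qquad Q_{\delta}(v)=\mathrm{Tr}_{\mathbb F_{q^2}/\mathbb F_q}\!\bigl(\delta\,v^{q^2+1}\bigr),\quad\text{where }v^{q^2+1}=N(v)\in\mathbb F_{q^2}.
\]
One verifies directly that $Q_{\delta}(cv)=c^{2}Q_{\delta}(v)$ for $c\in\mathbb F_q$ and that $(v,w)\mapsto Q_{\delta}(v+w)-Q_{\delta}(v)-Q_{\delta}(w)=\mathrm{Tr}_{\mathbb F_{q^4}/\mathbb F_q}(\delta\,vw^{q^{2}})$ is $\mathbb F_q$-bilinear and nondegenerate (for $v\ne0$, $w\mapsto\delta vw^{q^{2}}$ is a bijection of $\mathbb F_{q^4}$ and the absolute trace is not identically zero), so $Q_{\delta}$ is a nondegenerate quaternary quadratic form over $\mathbb F_q$. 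Because $\ker\mathrm{Tr}_{\mathbb F_{q^2}/\mathbb F_q}=\mathbb F_q\,\delta$, we have $Q_{\delta}(v)=0\iff N(v)\in\mathbb F_q$; counting preimages under $N$ (a surjection with kernel of size $q^2+1$) shows the zero set of $Q_{\delta}$ has $(q-1)(q^2+1)+1=q^3-q^2+q$ vectors, which, by the standard enumeration of zeros of a quaternary quadratic form, forces $Q_{\delta}$ to be of minus (elliptic) type. Finally $N(\beta^{x})=\alpha^{x(q+1)(q^{2}+1)}=\bigl(\alpha^{(q^4-1)/(q-1)}\bigr)^{x}\in\mathbb F_q^{\ast}$, so $\widehat O$ sits inside the zero set of $Q_{\delta}$; both sets have $q^3-q^2+q$ vectors, so they coincide. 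Thus $O$ is the elliptic quadric $Q^{-}(3,q)$, which contains no line (its Witt index is $1$); hence no three points of $O$ are collinear, and as $|O|=q^2+1$ is the maximal cap size, $O$ is an ovoid.

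I expect the crux to be the middle step: confirming that $Q_{\delta}$ is a genuine nondegenerate quadratic form and, above all, that it is of elliptic rather than hyperbolic type. The cardinality count of its zero set is the cleanest way to settle the type, but it leans on the classical enumeration of zeros of a quaternary quadratic form over $\mathbb F_q$; alternatively one can note that $Q_{\delta}$ is built from the anisotropic $\mathbb F_{q^2}$-form $N$ by restriction of scalars and track Witt indices through that. A coordinate-free shortcut would be to invoke that the order-$(q^2+1)$ subgroup of the Singer cycle of $PG(3,q)$ (acting on points as $[v]\mapsto[\beta v]$) stabilizes an elliptic quadric and is regular on its points, whence $O=\{[\beta^{x}]\}$ must be that quadric — but cashing this in comes down to the same computation.
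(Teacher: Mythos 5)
The paper gives no proof of this proposition at all: it is imported verbatim as \cite[Theorem~3]{Ebert1985}, so there is no internal argument to compare yours against. Your proof is correct and self-contained, and it is essentially the classical argument underlying Ebert's theorem: the cone over $O$ is the subgroup $\langle\beta\rangle$ of index $q+1$ in $\mathbb{F}_{q^4}^{*}$ (together with $0$), and you exhibit it as the full zero locus of the quaternary $\mathbb{F}_q$-quadratic form $v\mapsto \mathrm{Tr}_{\mathbb{F}_{q^2}/\mathbb{F}_q}\bigl(\delta\, v^{q^2+1}\bigr)$, whose cardinality $(q-1)(q^2+1)+1=q^3-q^2+q$ pins it down as elliptic rather than hyperbolic. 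The individual verifications all check: the order computation for $\beta=\alpha^{q+1}$ and the identification $\mathbb{F}_q^{*}=\langle\beta^{q^2+1}\rangle$ (consistent with \cref{subprim}) give $|O|=q^2+1$; the polarization identity $Q_\delta(v+w)-Q_\delta(v)-Q_\delta(w)=\mathrm{Tr}_{\mathbb{F}_{q^4}/\mathbb{F}_q}(\delta v w^{q^2})$ is right because $\delta^{q^2}=\delta$, and its nondegeneracy follows as you say; and $Q_\delta(v)=0\iff N(v)\in\mathbb{F}_q$ because $\ker\mathrm{Tr}_{\mathbb{F}_{q^2}/\mathbb{F}_q}=\mathbb{F}_q\delta$. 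Two minor caveats. First, the type determination rests on the classical count of singular vectors of a nondegenerate quaternary form over $\mathbb{F}_q$ (and, for even $q$, on the characteristic-$2$ version of that dichotomy); that is a fair citation, but it is where the real content of the ``elliptic, hence no lines, hence a cap'' conclusion lives, exactly as you flag. Second, your closing step ``$q^2+1$ is the maximal cap size, so $O$ is an ovoid'' invokes the Bose--Qvist bound, which the paper itself only records for $q>2$; for $q=2$ the elliptic quadric is the classical ovoid but is not a maximum cap, so under the paper's literal definition the proposition should be read with $q>2$ (harmless here, since all applications in the paper take $q$ an odd prime power). Neither caveat is a gap in substance.
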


    By \cref{genovoid}, the rank of any $4 \times 3$ submatrix of $G^{q+1}_{q^2+1}$ is 3. The rank of any $4 \times 4$ submatrix of $G^{q+1}_{q^2+1}$ is either 3 or 4, depending on whether the corresponding $4$ points are coplanar or not, respectively. 

  \begin{rem} \label{3design}
    Let $q$ be a prime power and $m = 4$. Let $\mathcal{M} = \lbrace i: 0 \leq i < q^2 +1 \rbrace$ be the set of column indices of $G^{q+1}_{q^2+1}$. The columns of $G^{q+1}_{q^2+1}$ correspond to an ovoid $O$ in $\PG(3,q)$. Its plane sections correspond to the blocks of a $3$-$(q^2+1, q+1, 1)$ design or a M{\"o}bius plane. 
    Each block (circle) $B$ of the $3$-$(q^2+1, q+1, 1)$ design (M{\"o}bius plane) corresponds to the column indices $\lbrace i_1, \ldots, i_{q+1} \rbrace$ with $A_{r,i_1} = \cdots = A_{r, i_{q+1}} = 0$ for some non-zero row $0 \leq r \leq q^4 -1$ of $A = A(G^{q+1}_{q^2+1})$.
    Therefore, the set: 
    \begin{center}
        $\mathcal{C} = \lbrace \lbrace i_1, \ldots, i_{q+1 } \rbrace : A_{r, i_1}= \cdots =  A_{r, i_{q+1}} = 0, \ 0 \leq r \leq q^4 - 1 \rbrace$
    \end{center}
    is the set of blocks of a $3$-$(q^2+1, q+1, 1)$ design or M{\"o}bius plane $(\mathcal{M}, \mathcal{C})$. 
\end{rem}

In the rest of this section, we review geometric objects and some of their properties in projective geometry. The reviewed materials can be found in \cite{Casse2006}. 

    Let $n$ be a positive integer. A \emph{homogeneous} polynomial $\phi$ of degree $n$ in the variables $x_0, x_1, \ldots, x_r$ over a field $F$ is the sum of terms of type $ax_0^{n_0}x_1^{n_1} \ldots x_r^{n_r}$ where $a \in F$, each $n_i$ is a non-negative integer, and $n_0 + \cdots + n_r = n$. Each term $ax_0^{n_0}x_1^{n_1}\ldots x_r^{n_r}$ of a homogeneous polynomial of degree $n$ is said to be of \emph{degree} $n$, and $a$ is called its \emph{coefficient}. 
    A \emph{hypersurface} $H$ of \emph{order} $n$, in $\PG(r,F)$, is a set of points $(x_0, x_1, \ldots, x_{r})$ satisfying $\phi(x_0, x_1, \ldots, x_{r}) = 0$, where $\phi$ is a non-zero homogeneous polynomial of degree $n$. 
     When $n=2$, the hypersurface is called a \emph{quadric}, and when $n=4$, the hypersurface is called a \emph{quartic}. 
     In $\PG(2,F)$, a hypersurface is referred to as a \emph{curve}, and when $n=2$, the plane curve is called a \emph{conic}. 
     If $\phi$ is irreducible over any extension of $F$, then the hypersurface $\phi = 0$ is said to be \emph{irreducible}; otherwise the hypersurface is \emph{reducible}.

\begin{nota}
Let $S$ be a set of points in $\PG(r, F)$ that satisfy the equation $\phi = 0$. We denote $S \hs \phi = 0$. 
\end{nota}

    Let $Q \hs \phi = 0$ be a quadric in $\PG(r,F)$ with $\phi = \sum_{j \geq i}a_{ij}x_ix_j$, $(0 \leq i \leq j \leq r)$ in $x_0, x_1, \ldots, x_r$. The quadric $Q$ is \emph{singular} if it contains at least one point $p$ such that every line through $p$ intersects $Q$ doubly at $p$; in other words, the equation of the quadric corresponding to the intersection of $Q$ and the line has a root with multiplicity equal to $2$. Such a point $p$ is called a \emph{singular point}.
    A point $p$ in a quadric $Q$ in $\PG(r,F)$ with equation $\phi = 0$ is singular if and only if $p$ satisfies $\frac{\partial \phi}{\partial x_i} = 0$, for $i = 0, 1, \ldots, r$. The equations $\frac{\partial \phi}{\partial x_i} = 0$, $i = 0, 1, \ldots, r$ can be written in matrix form as $Mx = 0$, where $M = [m_{ij}]$; $m_{ii} = 2a_{ii}$, $m_{ij} = m_{ji} = a_{ij}$.  
The $(r + 1) \times (r+1)$ matrix $M$ is the \emph{matrix associated with the quadric} defined by $\phi = 0$. Note that $\phi$ can be displayed using $M$: $\phi(x) = \frac{1}{2}x^T M x$ where $x^T = \left [x_0, x_1, \ldots, x_r \right ]$. It is clear that $\rank(M) = r +1$ if and only if $Mx = 0$ has a trivial solution. Then $\rank(M) = r + 1$ if and only if $Q$ is non-singular.

Some important properties of quadrics are listed in the following: 

\begin{enumerate}
    \item  A conic $C$ in $\PG(2,F)$ is reducible if and only if $C$ is singular. 

    \item  For a prime power $q$, if a non-singular conic has one point in $\PG(2,q)$, then it has precisely $q +1$ points in $\PG(2,q)$. 
     \item   Any five distinct points, no three collinear in $\PG(2,q)$ determine a unique conic. 
    \item  If $Q$ is a non-singular quadric in $\PG(3,q)$, then it is irreducible. We prove the contrapositive. Let $Q \hs \phi = 0$ be a reducible quadric. Then $\phi = \phi_1 \phi_2$ where $\phi_1$ and $\phi_2$ are equations of planes. If the two planes are distinct ($\phi_1 \neq \phi_2$), they intersect in a line, and all points on that line are singular. If two planes coincide ($\phi_1 = \phi_2$), then all points on the plane are singular. Therefore, $Q$ is singular. 

    \item  For $r \geq 3$, there are singular quadrics in $\PG(r,F)$ which are irreducible. 

    \item  Let $K: \PG(r, F) \longrightarrow \PG(r, E)$ be a linear collineation of $\PG(r, F)$ where $E$ is a field extension of $F$ represented by matrix $K \in E^{(r+1) \times (r+1)}$. Suppose $\bar{x} = Kx$ denotes the new coordinates after the transformation $K$. Let $Q \hs \phi = 0$ be a quadric in $\PG(r, F)$ with associated matrix $M$, and $Q' \hs \phi' = 0$ be a quadric in $\PG(r, E)$ with associated matrix $M'$ after transforming $\phi$ using $K$. So, $\phi = \frac{1}{2} x^T M x$, and $\phi' = \frac{1}{2} \bar{x}^T M' \bar{x}$. It is clear that $M = K^T M' K$. Since $\det(M) = \det(K^T)\det(M')\det(K)$ and $\det(K) = \det(K^T) \neq 0$, then $\det(M) = 0$ if and only if $\det(M') = 0$. Therefore, $Q$ is singular if and only if $Q'$ is singular.  
\end{enumerate}

   Let $Q_1 \hs \phi_1 = 0$ and $Q_2 \hs \phi_2 =0$ be two quadrics in $\PG(r, F)$. Let $F' = F \cup \lbrace \infty \rbrace$. Then the set of quadrics $Q_1 + \lambda Q_2 \hs \phi_1 + \lambda \phi_2 = 0$, $\lambda \in F'$, where $\lambda = \infty$ corresponds to $Q_2 \hs \phi_2 =0$, is called a \emph{pencil of quadrics}.
    Any point of $Q_1 \cap Q_2$ satisfies both $\phi_1 = 0$ and $\phi_2 = 0$ and therefore lies on every quadric of the pencil $Q_1 + \lambda Q_2$. These points are the only such points that lie on every quadric of the pencil.
Let $C$ be a conic in $\PG(2,q)$ with equation $\phi(x,y, z) = ax^2 + by^2 + cz^2 + dyz + exz + fxy = 0$. Let $p_1$, $p_2$, $p_3$, and $p_4$ be four distinct points satisfying this equation, no three collinear. We can always define a linear transformation $T$ such that $T(p_1) = (1 : 0 : 0)$, $T(p_2) = (0 : 1 : 0)$, $T(p_3) = (0 : 0 : 1)$, and 
    $T(p_4) = (1 : 1 : 1)$. Then, the new equation of $C$ with respect to the new coordinates is $\phi' = d'y'z' + e'x'z' + f'x'y' = 0$, since $\phi'(1, 0, 0) = \phi'(0, 1, 0) = \phi'(0, 0, 1) = 0$ force the coefficients of $x'^2$, $y'^2$, and $z'^2$ to be equal to zero, and $d' + e' + f' = 0$.
\begin{prop} \label{onepencil}
    The only conics through four distinct points in $\PG(2,q)$ (no three collinear) are the members of a pencil of conics. 
\end{prop}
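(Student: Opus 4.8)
The plan is to reduce to the coordinate normalization already carried out just before the statement and then do a short dimension count. Given four distinct points $p_1,p_2,p_3,p_4$ of $PG(2,q)$ with no three collinear, they form a projective frame, so one can fix an invertible linear transformation $T$ with $T(p_1)=(1:0:0)$, $T(p_2)=(0:1:0)$, $T(p_3)=(0:0:1)$ and $T(p_4)=(1:1:1)$; this is precisely where the hypothesis ``no three collinear'' is used. Since $T$ sends the zero set of a quadratic form $\phi$ to the zero set of $\phi\circ T^{-1}$ (again a nonzero quadratic form), it maps conics bijectively to conics, and it maps a pencil $\phi_1+\lambda\phi_2$ to the pencil $(\phi_1\circ T^{-1})+\lambda(\phi_2\circ T^{-1})$. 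Hence it suffices to prove the statement for the four standard frame points and transport the conclusion back through $T^{-1}$.

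For the standard points, the computation preceding the proposition shows that a conic contains all four of them if and only if its equation can be written $\phi = d\,yz+e\,xz+f\,xy=0$ with $d+e+f=0$. Thus the conics through $p_1,\dots,p_4$ correspond to the nonzero triples $(d,e,f)$ satisfying $d+e+f=0$, taken up to a scalar, and the solutions of $d+e+f=0$ form a $2$-dimensional subspace $W\subseteq\mathbb{F}_q^3$. I would then pick two linearly independent vectors of $W$, say $(1,-1,0)$ and $(0,1,-1)$, giving the two conics $Q_1\hs\phi_1=0$ and $Q_2\hs\phi_2=0$ with $\phi_1 = yz-xz$ and $\phi_2 = xz-xy$ (geometrically, the degenerate conics $\overline{p_1p_2}\cup\overline{p_3p_4}$ and $\overline{p_2p_3}\cup\overline{p_1p_4}$). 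Because $\phi_1$ and $\phi_2$ are linearly independent as polynomials these are two genuinely distinct conics, and each visibly passes through all four points, so $Q_1+\lambda Q_2$, $\lambda\in\mathbb{F}_q\cup\{\infty\}$, is a bona fide pencil of conics.

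It remains to verify the two inclusions. For one direction, any conic $\phi = d\,yz+e\,xz+f\,xy$ through the four points has $(d,e,f)\in W=\langle(1,-1,0),(0,1,-1)\rangle$, so $\phi$ is a nonzero $\mathbb{F}_q$-combination of $\phi_1$ and $\phi_2$; up to a scalar this is $\phi=\phi_1+\lambda\phi_2$ for some $\lambda\in\mathbb{F}_q$, or $\phi=\phi_2$, so $\phi$ is a member of the pencil. For the other direction, $p_1,\dots,p_4$ all lie in $Q_1\cap Q_2$, and by the property of pencils recalled earlier every point of $Q_1\cap Q_2$ lies on every quadric $Q_1+\lambda Q_2$; hence every member of the pencil passes through the four points. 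Pulling everything back through $T^{-1}$ completes the proof.

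I do not anticipate a genuine obstacle: the core of the argument is just $\dim W = 2$, which produces a $PG(1,q)$'s worth of conics, matching the $q+1$ members of a pencil. The only step that deserves care is the reduction itself --- confirming that ``being a conic through a prescribed $4$-point frame'' is preserved in both directions under the linear change of coordinates, so that the pencil built in standard coordinates truly pulls back to a pencil of conics through the original four points.
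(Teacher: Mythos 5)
Your proof is correct and follows essentially the same route as the paper: both normalize the four points to the standard frame, reduce to quadratic forms $d\,yz+e\,xz+f\,xy$ with $d+e+f=0$, and conclude by linear algebra on the coefficient triples. The only difference is packaging --- you give a direct dimension count ($\dim W=2$) with explicit degenerate generators and check both inclusions, whereas the paper argues by contradiction that three such conics cannot have a rank-$3$ coefficient matrix; the content is the same.
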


\begin{proof}
    Without loss of generality, let $C_1 \hs \phi_1 = 0 $, $C_2 \hs \phi_2 = 0$, and $C_3 \hs \phi_3 = 0$ be three conics sharing $(1 : 0 : 0)$, $(0 : 1 : 0)$, $(0 : 0 : 1)$, and 
    $(1 : 1 : 1)$. Let the equations of $\phi_1$, $\phi_2$, and $\phi_3$ be

    \begin{align*}
        \phi_1  & = ayz + bxz + cxy = 0, \\
        \phi_2  & = a'yz + b'xz + c'xy = 0, \\
        \phi_3  & = a''yz + b''xz + c''xy = 0. 
    \end{align*}

    Assume, for the sake of contradiction, that $\phi_i$ is not in the pencil of $\phi_j$ and $\phi_k$, for some distinct $i, j, k \in \lbrace 1, 2, 3 \rbrace$. Then the matrix in the following system of equations has rank $3$: 

    \begin{equation*}
        \begin{bmatrix}
            a & b & c  \\
         a'&  b'& c' \\
        a''&  b''& c'' \\
        \end{bmatrix} \begin{bmatrix}
            yz \\ 
            xz \\
            xy
        \end{bmatrix} = 0. 
    \end{equation*}

    Hence, the system has only the trivial solution: $yz = 0$, $xz = 0$, $xy = 0$. Therefore, at least two of $x$, $y$, and $z$ must be zero which contradicts the fact that $(1 : 1 : 1)$ is on $\phi_1$, $\phi_2$, and $\phi_3$. 
\end{proof}

\subsection{Background on covering arrays} \label{subsec:ca}

We start this section by defining orthogonal arrays and covering arrays, and investigate some connections between finite geometry and these combinatorial designs, where $G_c^l$ in \cref{Gengen} plays a central role. 

An \emph{orthogonal array}, denoted $\OA_{\lambda}(N; t, k, v)$, is an $N \times k$ array over an alphabet with $v$ symbols with the property that for any $N \times t$ subarray, each $t$-tuple from the alphabet occurs exactly $\lambda = \frac{N}{v^t}$ times as a row.
Here, $\lambda$ is the \emph{index}, and $t$ is the \emph{strength} of the orthogonal array.
If $\lambda =1$, we remove $\lambda$ from the notation and denote $\OA(N; t, k, v)$. 

In an array over an alphabet of $v$ symbols, a $t$-set of column indices $ \lbrace c_1, \ldots, c_t \rbrace$ is {\emph{$\lambda$-covered}} if each $t$-tuple of the alphabet occurs at least $\lambda$ times as a row of the subarray indexed by $c_1, \ldots, c_t$; we simply use \emph{covered} to denote $1$-covered. 

A \emph{covering array}, denoted by $\CA_{\lambda}(N; t, k, v)$, is an $N \times k$ array over an alphabet with $v$ symbols with the property that any $t$-set of columns is $\lambda$-covered. Here, $N$ is the \emph{size}, and $t$ is the \emph{strength} of the covering array. If $\lambda =1$, we remove $\lambda$ from the notation and denote $\CA(N; t, k, v)$. The covering array number denoted by $\CAN(t, k, v)$ is the minimum $N$ for which a $\CA(N; t, k, v)$ exists. A major application of covering arrays is to generate software test suites to cover all $t$-way interactions between $k$ components, where each component has $v$ values. The number of tests required to exhaustively detect faults in such a system is equal to $v^k$. However, strength-$t$ covering arrays reduce the number of tests dramatically by testing all possible $v^t$ combinations for every $t$-set of factors using $O(\log k)$ tests for fixed $v$ and $t$ \cite{Colbournsurvey2006, Brett2015asym}. This application justifies the importance of covering arrays. For more information about combinatorial testing, see \cite{Nie2011}.
Determining bounds on $\CAN(t, k, v)$ is widely studied \cite{Colbournsurvey2006, handbookcov, Torressurvey2019}. 
Best-known upper bounds for $\CAN(t, k, v)$ for $2 \leq t \leq 6$, $1 \leq k \leq 10000$, and $2 \leq v \leq 25$ are provided in the \href{https://github.com/ugroempi/CAs/blob/main/ColbournTables.md}{covering array tables} maintained by Colbourn \cite{ctable}.
Colbourn \cite{Colbournsurvey2006} and Torres-Jimenez et al. \cite{Torressurvey2019} provided general surveys about covering arrays, where bounds, constructions, and applications of covering arrays are discussed. 

A covering perfect hash family $\CPHF_{\lambda}(n; k, q, t)$ is an $n \times k$ array of elements from $\mathbb{F}_{q}^{t} \setminus \Vec{0}$ (equivalently the points of $\PG(t - 1, q)$) such that, for each
set $T$ of $t$ columns, there exist at least $\lambda$ rows whose entries in the columns of $T$
are linearly independent. If the vector entries all have a non-zero last coordinate
(equivalently, they are the points of the affine subspace of $\PG(t - 1, q)$) then the
CPHF is a Sherwood covering perfect hash family $\SCPHF_{\lambda}(n; k, q, t)$.
Covering perfect hash families are a compressed way to present certain types of covering arrays.
\begin{thm} [\cite{Colbourn2022Brett}] \label{cphftoCA}
Suppose that $C$ is a $\CPHF_{\lambda}(n; k, q, t)$ and $\lambda' \leq \lambda$. Then there exists a $\CA_{\lambda'}(n(q^t  - 1)+ \lambda'; t, k, q)$; and there exists a $\CA_{\lambda'}(n(q^t  - q)+ \lambda'q; t, k, q)$
if $C$ is an $\SCPHF_{\lambda}(n; k, q, t)$.
\end{thm}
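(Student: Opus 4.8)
The plan is to use the standard ``expand each hash-family row into its linear span and then delete redundant constant rows'' argument. First I would, for each row $r$ of the CPHF $C$, form the $t\times k$ matrix $M_r$ over $\mathbb{F}_q$ whose $c$-th column is a representative of the point $C_{r,c}\in\mathbb{F}_q^{t}\setminus\{\vec 0\}$, and let $A_r$ be the $q^{t}\times k$ array whose rows are the words $vM_r$ as $v$ ranges over all of $\mathbb{F}_q^{t}$; this is exactly the span construction of \cref{Gengen} applied to the rows of $M_r$. Stacking $A_1,\dots,A_n$ vertically gives an $nq^{t}\times k$ array $A$ over $q$ symbols.

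Next I would check that $A$ is a CA$_{\lambda}(nq^{t};t,k,q)$. Fix a $t$-set $T$ of columns. By the defining property of a CPHF there are at least $\lambda$ rows $r$ for which the entries $C_{r,c}$, $c\in T$, are linearly independent, equivalently the $t\times t$ submatrix $N_r$ of $M_r$ on the columns of $T$ is invertible. For each such $r$ the map $v\mapsto vN_r$ is a bijection of $\mathbb{F}_q^{t}$, so the sub-array of $A_r$ on the columns of $T$ contains every $t$-tuple over $\mathbb{F}_q$ exactly once; summing over the $\lambda$ good rows, $T$ is $\lambda$-covered in $A$, and the remaining blocks can only increase coverage.

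Then I would trim. In the general CPHF case, the row $v=\vec 0$ of each block $A_r$ is the all-zeros word of length $k$, so $A$ contains $n$ such rows; delete all of them and append $\lambda'$ fresh all-zeros rows, giving $nq^{t}-n+\lambda'=n(q^{t}-1)+\lambda'$ rows. Coverage at level $\lambda'$ is preserved: in a good block the all-zeros $t$-tuple on $T$ arises only from $v=\vec 0$ (since $N_r$ is invertible), so every $t$-tuple other than the all-zeros one still appears at least $\lambda\ge\lambda'$ times, while the all-zeros tuple is supplied by the $\lambda'$ appended rows. In the SCPHF case I would first scale every entry so that its last coordinate equals $1$; then the last row of each $M_r$ is all-ones, the rows $v=(0,\dots,0,c)$ of $A_r$ are the constant words $(c,\dots,c)$ for $c\in\mathbb{F}_q$, and these $q$ words are common to all $n$ blocks. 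Deleting them from every block and appending $\lambda'$ copies of each constant word yields $nq^{t}-nq+\lambda'q=n(q^{t}-q)+\lambda'q$ rows, and the same reasoning---using that in a good block the last row of $N_r$ is also all-ones, so the constant tuple with value $c$ on $T$ comes only from $v=(0,\dots,0,c)$---shows this array is a CA$_{\lambda'}$.

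The construction and the coverage count are routine linear algebra over $\mathbb{F}_q$; the only point needing care is the bookkeeping in the trimming step, namely verifying that within a ``good'' block the deleted rows are precisely the preimages under $v\mapsto vN_r$ of the constant (respectively all-zeros) $t$-tuples on the chosen columns, so that each such tuple is removed exactly once per good block and the re-added rows restore $\lambda'$-coverage. I expect this uniform verification of the two cases to be the main, though minor, obstacle.
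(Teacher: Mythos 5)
The paper does not prove \cref{cphftoCA}; it is quoted from \cite{Colbourn2022Brett} without proof, so there is no internal argument to compare against. Your proposal is the standard and correct derivation: the span construction, the bijectivity of $v\mapsto vN_r$ on good rows, and the trimming bookkeeping (including the observation that in a good block the deleted rows are the unique preimages of the all-zeros, respectively constant, $t$-tuples, so re-adding $\lambda'$ copies restores $\lambda'$-coverage) are all handled correctly and match the argument in the cited source.
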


\begin{thm}[{\cite[Theorem 2]{Raaphorst2014}}]\label{rank}
    Let $G_k^1$ be the generator matrix with respect to a degree-$m$ primitive polynomial, and $k = \frac{q^m-1}{q-1}$. Then, the following are equivalent:
    \begin{enumerate}
        \item A set of $s$ columns $C =  \lbrace c_{i_{0}}, \cdots , c_{i_{s-1}} \rbrace $ is not covered in $A(G_k^1)$. 
        \item The set of vectors $ \lbrace \alpha^{i_{0}}, \cdots, \alpha^{i_{s-1}} \rbrace$ is linearly dependent over $\mathbb{F}_q$. 
        
        Furthermore, if $s = m$, the following statement is also equivalent to (1) and (2):  
       \item There is a row $r$ other than the all-zero row of $A(G_k^1)$ such that $r_{i_{0}} = \cdots = r_{i_{m-1}} = 0$.
    \end{enumerate}
\end{thm}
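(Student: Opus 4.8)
The plan is to reduce everything to elementary linear algebra over $\mathbb{F}_q$, using only that $G_k^1$ has rank $m$ (\cref{Gengen} and the discussion following it) and that $L$ is an $\mathbb{F}_q$-linear isomorphism $\mathbb{F}_{q^m}\to\mathbb{F}_q^m$. First I would set up the identification of the rows of $A:=A(G_k^1)$ with vectors: every row of $A$ has the form $v^{T}G_k^1$ for a unique $v\in\mathbb{F}_q^m$, the map $v\mapsto v^{T}G_k^1$ being injective precisely because the columns of $G_k^1$ span $\mathbb{F}_q^m$, and the all-zero row corresponds to $v=0$. Under this identification the entry of the row $v$ in column $i$ equals $\langle v, L(\alpha^i)\rangle$ (equivalently, by \cref{p1lfsr}, $Tr(\beta\alpha^i)$ for the element $\beta$ associated with $v$, though I will not need the trace description).

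Next, for a set of columns $C=\{c_{i_0},\ldots,c_{i_{s-1}}\}$ I would consider the $\mathbb{F}_q$-linear map $\Phi_C\colon\mathbb{F}_q^m\to\mathbb{F}_q^s$ sending $v$ to $(\langle v,L(\alpha^{i_0})\rangle,\ldots,\langle v,L(\alpha^{i_{s-1}})\rangle)$. The key observation is that the set of rows of the subarray of $A$ indexed by $C$ is exactly the image of $\Phi_C$ (with multiplicities that are irrelevant here); this is where the ``rows are the span of $G_k^1$'' structure of $A$ is used. Hence $C$ is covered in $A$ if and only if $\Phi_C$ is surjective. The matrix of $\Phi_C$ is the $s\times m$ matrix whose rows are $L(\alpha^{i_0})^{T},\ldots,L(\alpha^{i_{s-1}})^{T}$, so $\Phi_C$ is surjective if and only if this matrix has full row rank $s$, i.e.\ if and only if $L(\alpha^{i_0}),\ldots,L(\alpha^{i_{s-1}})$ are linearly independent in $\mathbb{F}_q^m$; since $L$ is a linear isomorphism this is equivalent to $\{\alpha^{i_0},\ldots,\alpha^{i_{s-1}}\}$ being linearly independent over $\mathbb{F}_q$. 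Negating both sides yields the equivalence of (1) and (2).

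Finally, for the case $s=m$ I would note that $\Phi_C$ is then an endomorphism of the $m$-dimensional space $\mathbb{F}_q^m$, so it is surjective if and only if it is injective if and only if it is bijective. Statement (3) asserts that some nonzero row $r$ of $A$ vanishes on the columns of $C$; translating via the identification above, this says there is a nonzero $v$ with $\Phi_C(v)=0$, i.e.\ $\ker\Phi_C\neq 0$, i.e.\ $\Phi_C$ is not injective, i.e.\ (since $s=m$) $\Phi_C$ is not surjective, i.e.\ by the previous paragraph $C$ is not covered. This closes the cycle $(1)\Leftrightarrow(2)\Leftrightarrow(3)$.

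I do not expect a serious obstacle; the only points needing care are (a) justifying that the rows of a subarray of $A$ form the image of a single linear map, rather than an arbitrary set, and (b) keeping straight that ``not covered'' corresponds to failure of surjectivity of $\Phi_C$, not failure of injectivity — the two coinciding only when $s=m$, which is exactly why statement (3) is listed only in that case.
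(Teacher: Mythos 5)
Your argument is correct and complete: identifying the rows of $A(G_k^1)$ with the image of $v\mapsto v^{T}G_k^1$, reading coverage of $C$ as surjectivity of $\Phi_C$, and using the injectivity/surjectivity dichotomy of an endomorphism of $\mathbb{F}_q^m$ when $s=m$ gives exactly the equivalences claimed, and you correctly flag the one place where care is needed (surjectivity versus injectivity, which coincide only when $s=m$). The paper itself does not prove this statement --- it imports it as Theorem~2 of Raaphorst et al.\ --- but your linear-algebra argument is essentially the standard proof of that result, with the trace description $Tr(\beta\alpha^i)$ replaced by the equivalent coordinate pairing $\langle v,L(\alpha^i)\rangle$.
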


A set $D = \lbrace d_1, d_2, \ldots, d_k \rbrace$ of $k \geq 2$ distinct residues modulo $v$ is called a $(v, k, \lambda)$-\emph{difference set} if for every $d \not\equiv 0 \ (\bmod \ v)$ there are exactly $\lambda$ ordered pairs $(d_i, d_j)$ with $d_i, d_j \in D$ such that $d_i - d_j \equiv d \ (\bmod \ v)$. Developing a difference set in $\mathbb{Z}_v$, by taking the translates $D + a = \lbrace d+ a: d \in D \rbrace $ for all $a \in \mathbb{Z}_v$, yields a $2$-$(v, k, \lambda)$ design.

\begin{thm}[{Raaphorst et al., \cite[Theorem 3]{Raaphorst2014}}]\label{bibd}
  	Let 
	$k = \frac{q^m - 1}{q-1}, z= \frac{q^{m-1} - 1}{q-1}, \lambda = \frac{q^{m-2} - 1}{q-1}$.
	Let $M$ be $A(G_k^1)$ with the all-zero row removed.
	Then each row of $M$ has exactly $z$ zeros, and the set:
	\begin{equation}
		\mathcal{B} = \lbrace \lbrace a_1, \ldots, a_{z-1} \rbrace : M_{i, a_1} = \cdots = M_{i, a_{z-1}} = 0 , \  0 \leq i < q^m -2 \rbrace
	\end{equation}
	is the set of blocks of a $2$-$(k, z, \lambda)$ design. Moreover, any block in $\mathcal{B}$ is a $(k, z, \lambda)$-difference set.
\end{thm}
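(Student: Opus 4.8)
The plan is to identify the rows of $M = A(G_k^1)$ with $\mathbb{F}_q$-linear functionals on $\mathbb{F}_{q^m}$ and the columns with the points of $PG(m-1,q)$, so that the zero pattern of a row becomes a point--hyperplane incidence; the resulting structure is then the classical point--hyperplane design of $PG(m-1,q)$, and the difference-set statement follows from the Singer cycle.

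First I would recall from \cref{Gengen} that every row of $M$ is an $\mathbb{F}_q$-linear combination of the $m$ rows of $G_k^1$, i.e.\ has the form $(v^{T} L(\alpha^{0}), v^{T} L(\alpha^{1}), \ldots, v^{T} L(\alpha^{k-1}))$ for some $v \in \mathbb{F}_q^{m}$. Using the nondegeneracy of the trace bilinear form $(x,y) \mapsto Tr(xy)$ on $\mathbb{F}_{q^m}$ (equivalently \cref{p1lfsr}), for each such $v$ there is a unique $\beta \in \mathbb{F}_{q^m}$ with $v^{T} L(\alpha^{i}) = Tr(\beta\alpha^{i})$ for all $i$, and this gives a bijection between the $q^m$ rows of $M$ and the elements $\beta \in \mathbb{F}_{q^m}$. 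Moreover $\alpha^{k} = \alpha^{(q^m-1)/(q-1)} \in \mathbb{F}_q^{\ast}$ by \cref{subprim}, so the indices $0, 1, \ldots, k-1$ are naturally read modulo $k$ and $i \mapsto \langle \alpha^{i}\rangle$ is a bijection from $\mathbb{Z}_k$ onto the point set of $PG(m-1,q)$.

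Next I would fix a nonzero row, i.e.\ $\beta \neq 0$, and observe that its zero set $\{\, i : Tr(\beta\alpha^{i}) = 0 \,\}$ is exactly the set of indices $i$ for which $\langle \alpha^{i}\rangle$ lies on the hyperplane $H_\beta = \{\, \langle x\rangle : Tr(\beta x) = 0 \,\}$ of $PG(m-1,q)$; since each hyperplane of $PG(m-1,q)$ contains $z = \frac{q^{m-1}-1}{q-1}$ points, every nonzero row of $M$ has exactly $z$ zeros. (The all-zero row $\beta = 0$ has $k$ zeros but contributes nothing: the ``for some row'' quantifier defining $\mathcal{B}$ picks out only the nontrivial zero patterns.) Since $H_\beta = H_{\beta'}$ iff $\beta' \in \mathbb{F}_q^{\ast}\beta$, and distinct subspaces have distinct point sets, the blocks in $\mathcal{B}$ are in bijection with the $k$ hyperplanes of $PG(m-1,q)$, and every hyperplane occurs because every $\mathbb{F}_q$-linear functional is $x \mapsto Tr(\beta x)$ for some $\beta$. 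Hence, under the identification $i \leftrightarrow \langle\alpha^i\rangle$, the incidence structure $(\{0,\ldots,k-1\}, \mathcal{B})$ is precisely the point--hyperplane design of $PG(m-1,q)$, which is the $2$-$(k, z, \lambda)$ design recalled in \cref{sec:bg}.

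Finally, for the difference-set claim I would invoke the Singer cycle: multiplication by $\alpha$ permutes the nonzero vectors of $\mathbb{F}_{q^m}$ cyclically and induces on $PG(m-1,q)$ an automorphism of order $k$ that, in the coordinates above, is exactly the shift $i \mapsto i+1 \pmod{k}$. This cyclic group acts regularly on points and transitively on the $k$ hyperplanes, so by orbit--stabilizer the stabilizer of any hyperplane is trivial; hence the orbit of any one block $B \in \mathcal{B}$ under the shift is all $k$ blocks, each exactly once, i.e.\ developing $B$ in $\mathbb{Z}_k$ reproduces the whole $2$-$(k,z,\lambda)$ design. By the standard equivalence between cyclic $2$-$(v,k',\lambda)$ designs and $(v,k',\lambda)$-difference sets, $B$ is a $(k, z, \lambda)$-difference set, and since every block arises this way, every block in $\mathcal{B}$ is a $(k, z, \lambda)$-difference set. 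The main thing to be careful about is the bookkeeping in the first step --- pinning down the row--functional correspondence and the modulo-$k$ indexing --- and checking that the Singer action is literally the cyclic shift on indices; once these are in place, the design and difference-set properties are essentially classical facts about $PG(m-1,q)$.
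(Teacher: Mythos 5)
The paper offers no proof of \cref{bibd}: it is imported verbatim from Raaphorst et al.\ \cite{Raaphorst2014} with a citation, so there is nothing internal to compare against. Your argument is correct and is the standard one underlying that result: the trace/nondegeneracy step is exactly \cref{p1lfsr}, identifying the zero set of a nonzero row with a hyperplane of $PG(m-1,q)$ recovers the point--hyperplane $2$-$(k,z,\lambda)$ design recalled in \cref{sec:bg}, and the Singer group of order $k$ acting transitively (hence regularly) on the $k$ hyperplanes correctly shows every block is a translate of one base block, whence the difference-set conclusion by the usual counting of pairs covering $\{0,d\}$. Two cosmetic remarks: the all-zero row has $k$ zeros, so ``each row has exactly $z$ zeros'' must be read as ``each nonzero row,'' which you already flag; and the displayed definition of $\mathcal{B}$ in the statement indexes the block as $\{a_1,\dots,a_{z-1}\}$ where it should have $z$ elements --- your reading of the blocks as the full zero sets is the intended one.
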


  By \cref{rank} and \cref{bibd}, each block of the  $2$-$(\frac{q^m - 1}{q-1}, \frac{q^{m-1} - 1}{q-1},\frac{q^{m-2} - 1}{q-1})$ design $(\mathbb{Z}_{\frac{q^m -1}{q-1}}, \mathcal{B})$ corresponds to a hyperplane in $\PG(m-1, q)$. 

Let $S$ be an $m \times t$ submatrix $S$ of $G_k^1$ for $1 \leq t \leq m$, and $k = \frac{q^m -1}{q-1}$. Suppose $\rank(S) = r$, for $1 \leq r \leq t$. If $ r \neq t$, then only $q^r$ distinct $t$-tuples are covered in $A(S)$. Otherwise, all distinct $t$-tuples in $\mathbb{F}_q^t$ are covered by some row in $A(S)$. If $t = m$ and $\rank(S) \neq t$ then, by \cref{rank}, the column indices of $S$ are contained in a block of the $2$-$(\frac{q^m - 1}{q-1}, \frac{q^{m-1} - 1}{q-1},\frac{q^{m-2} - 1}{q-1})$ design and the corresponding points in $\PG(m-1, q)$ are in the hyperplane associated with the block. 
Since no two columns of $G_k^l$ are multiples of each other, the array $A(G_k^l)$ is an $\OA_{q^{m-2}}(q^m; 2, k, q)$ which is also a $\CA_{q^{m-2}}(q^m; 2, k, q)$. We can obtain a strength-$3$ covering array by using $G^{q+1}_{q^2+1}$ for $m=4$ in \cref{Gengen}. 
\begin{cor} \label{3-ca}
     Let $(\mathcal{M}, \mathcal{C})$ be the M{\"o}bius plane corresponding to $G^{q+1}_{q^2+1}$, defined in \cref{3design}. 
     The array $A(G^{q+1}_{q^2+1})$ for $m=4$ in \cref{Gengen} is an $\OA_{q}(q^4; 3, q^2 +1, q) = \CA_{q}(q^4; 3, q^2 +1, q)$, and any $4$-set of columns not covered corresponds to a $4$-set of points that are in a unique block of $\mathcal{C}$.
\end{cor}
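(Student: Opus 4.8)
The plan is to adapt the counting argument behind \cref{rank} to the generator matrix $G^{q+1}_{q^2+1}$ of \cref{Gengen} with $m=4$, and then to read the geometry off from \cref{genovoid} and \cref{3design}. First I would isolate the mechanism that makes \cref{rank} work and observe that it uses nothing specific to $G_k^1$: for any $t$-set of columns with corresponding $4\times t$ submatrix $S$ of $G^{q+1}_{q^2+1}$, every row of $A(G^{q+1}_{q^2+1})$ has the form $v^{T}G^{q+1}_{q^2+1}$ with $v\in\mathbb{F}_q^{4}$, so the multiset of $t$-tuples appearing in those columns is exactly the row space of $S$, with each tuple occurring $q^{\,4-\operatorname{rank}(S)}$ times. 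Hence the $t$-set of columns is covered precisely when $\operatorname{rank}(S)=t$, in which case each $t$-tuple occurs exactly $q^{\,4-t}$ times.

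Next I would specialize to $t=3$. By \cref{genovoid} the columns of $G^{q+1}_{q^2+1}$ are the points of an ovoid of $PG(3,q)$, so no three are collinear and every $4\times 3$ submatrix has rank $3$; by the previous step every $3$-set of columns is covered, each $3$-tuple occurring exactly $q^{\,4-3}=q$ times, so $A(G^{q+1}_{q^2+1})$ is an $\mathrm{OA}_{q}(q^4;3,q^2+1,q)$ and, being an orthogonal array of index $q$, it is a fortiori a $\mathrm{CA}_{q}(q^4;3,q^2+1,q)$. I would also note in passing that $\operatorname{rank}\bigl(G^{q+1}_{q^2+1}\bigr)=4$, since an ovoid has $q^2+1>q+1$ points and hence is not contained in any plane, so that $A(G^{q+1}_{q^2+1})$ really does have $q^4$ rows.

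For the second assertion I would take a $4$-set of columns $\{i_1,i_2,i_3,i_4\}$ that is not covered, so that its $4\times4$ submatrix $S$ has $\operatorname{rank}(S)\neq 4$; since any three of its columns are linearly independent, $\operatorname{rank}(S)=3$, i.e.\ the four corresponding ovoid points span a plane $\pi$ of $PG(3,q)$. This plane is not tangent to the ovoid (a tangent plane meets the ovoid in a single point), hence it is a secant plane, and $\pi$ meets the ovoid in a $(q+1)$-arc, that is, in a circle $B\in\mathcal{C}$ containing all four points. Uniqueness of $B$ follows because three of those four points are non-collinear and hence lie in a unique plane of $PG(3,q)$ --- equivalently, from the $\lambda=1$ property of the M{\"o}bius $3$-design of \cref{3design}. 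To connect this directly with \cref{3design}, observe that $\operatorname{rank}(S)=3$ produces a nonzero row $r$ of $A(G^{q+1}_{q^2+1})$ with $r_{i_1}=r_{i_2}=r_{i_3}=r_{i_4}=0$; the zero set of any nonzero row is the set of ovoid points lying on some plane, which is either one point (tangent) or $q+1$ points (secant), so four zeros force exactly $q+1$ of them and the zero set is then precisely a block of $\mathcal{C}$.

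I expect the only care needed to be in two bookkeeping places: checking that the equivalence of \cref{rank} between a $t$-set of columns being covered and its columns being linearly independent transfers verbatim from $G_k^1$ to $G^{q+1}_{q^2+1}$ (it does, since the proof only uses that the rows of $A$ form the $\mathbb{F}_q$-span of the rows of the generator matrix), and using the tangent/secant dichotomy to upgrade the existence of four common zeros in a row into a genuine circle and to pin down the uniqueness of the corresponding block.
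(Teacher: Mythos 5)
Your proposal is correct and follows essentially the same route as the paper's own (very terse) proof: invoke \cref{genovoid} to get that any three columns are linearly independent, transfer the rank-versus-coverage equivalence of \cref{rank} to $G^{q+1}_{q^2+1}$, and identify the rank-$3$ quadruples with secant-plane sections, i.e.\ blocks of $\mathcal{C}$ as defined in \cref{3design}. You simply supply the details (the $q^{4-\operatorname{rank}(S)}$ counting and the tangent/secant dichotomy) that the paper leaves implicit.
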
 

\begin{proof}
    \cref{genovoid} implies any $3$ distinct columns of $G^{q+1}_{q^2+1}$ are linearly independent. Then, \cref{rank} combined with the definition of $(\mathcal{M}, \mathcal{C})$ yields the result. 
\end{proof}

\section{Orthogoval planes and their corresponding covering arrays} \label{sec:orthogoval}
In this section, we review orthogoval planes $\PG(2,q)$ and $\AG(2,q)$ in the literature and review the corresponding constructed covering arrays. 

Two projective (affine) planes with the same point sets are \emph{orthogoval} if the common intersection of any line from each has size at most two \cite{Colbourn2022Brett}. The existence of orthogoval projective planes is proved independently by \cite{Bruck1973, Glynn1978,Hall1947,Dieter1984,Raaphorst2014,Veblen}, and non-independent novel proofs are published in \cite{Baker1994,Colbourn2022Brett,Panario2020}. 

The common idea in these works was to show that for a pair of projective planes with the same pointset, any line in one is an oval in another. They have used different methods using combinatorial design theory and finite geometry. They consider $D$ a $(q^2 + q+1, q+1, 1)$-difference set whose translates are the lines in $\PG(2,q)$, and show that there exists $D'$ obtained from $D$, which corresponds to an oval in $\PG(2,q)$. For instance, Hall \cite{Hall1947} determined that $\frac{1}{2}D = \lbrace x: 2x \in D \rbrace$ is an oval. Bruck \cite{Bruck1973} showed $-D$ is an oval and noted that this gives a pair of planes where lines in one are ovals in the other. Baker et al.~\cite{Baker1994} unified $-D, \frac{1}{2}D$ into a single framework.
Raaphorst et al.~\cite{Raaphorst2014} used a primitive polynomial and its reciprocal to construct strength-$3$ covering arrays and their construction relied on the property that the two projective planes are orthogoval. Panario et al.~\cite{Panario2020} provide a new proof of the existence of a pair of orthogoval planes. They used $G_{q^2 +q +1}^1$ and $G_{q^2 +q +1}^{-1}$ for $\alpha$ not necessarily primitive to represent two projective planes. Colbourn et al.~\cite{Colbourn2022Brett} define the term ``orthogoval'' projective (affine) plane, and provide a new proof for the existence of a pair of orthogoval planes using a Cremona transformation. They extended the definition to an affine plane and showed the existence of a pair of orthogoval affine planes. They also considered sets (of more than two) mutually orthogoval projective (affine) planes.

\cref{geotoca} shows the connection between $s$ mutually orthogoval projective (affine) planes and covering perfect hash families, and hence by \cref{cphftoCA} with covering arrays. 
\begin{thm} [\cite{Colbourn2022Brett}] \label{geotoca}
    Suppose that a set of $s$ mutually orthogoval Desarguesian projective (affine) planes exists. Then there exists a $\CPHF_{s - 1} (s; 3, q^2 + q + 1, q)$ ($\SCPHF_{s - 1}(s; 3, q^2 , q)$). 
\end{thm}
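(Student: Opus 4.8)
The plan is to realize each of the $s$ mutually orthogoval planes by an algebraic coordinatization and then to verify directly that the resulting array of points of $PG(2,q)$ meets the defining condition of a CPHF (resp.\ SCPHF) with $\lambda = s-1$. First I would fix an enumeration $x_1,\dots,x_k$ of the common point set, where $k = q^2+q+1$ in the projective case and $k = q^2$ in the affine case. Since each plane $P_i$ ($1\le i\le s$) is Desarguesian, it is isomorphic to $PG(2,q)$ (resp.\ $AG(2,q)$); fix such an isomorphism $\phi_i$. In the projective case $\phi_i$ identifies the points of $P_i$ with the $1$-dimensional subspaces of $\mathbb{F}_q^3$, so $\phi_i(x_j)$ may be viewed as a point of $PG(2,q) = \mathbb{F}_q^3\setminus\{\vec 0\}$. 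In the affine case $\phi_i$ identifies the points of $P_i$ with $AG(2,q)$, which we embed in $PG(2,q)$ as the points with nonzero last coordinate, choosing the representative normalized to have last coordinate $1$. Let $C$ be the $s\times k$ array with $C[i,j] = \phi_i(x_j)$. By construction $C$ has the shape of a CPHF (resp.\ SCPHF): its entries lie in $\mathbb{F}_q^3\setminus\{\vec 0\}$, and in the affine case all of them have nonzero last coordinate, which is the Sherwood condition.

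Next I would invoke the standard fact that three points of $PG(2,q)$ are collinear if and only if their coordinate vectors are linearly dependent; in the affine embedding this is precisely the vanishing of the $3\times 3$ determinant whose rows are $(x,y,1)$, i.e.\ ordinary affine collinearity. Consequently, for a set $T=\{j_1,j_2,j_3\}$ of three columns, row $i$ of $C$ has linearly independent entries exactly when $x_{j_1},x_{j_2},x_{j_3}$ are \emph{not} collinear in $P_i$. It then suffices to bound the number of rows that fail on $T$. If the triple were collinear in two distinct planes $P_i$ and $P_{i'}$, it would lie on a line $\ell$ of $P_i$ and on a line $\ell'$ of $P_{i'}$, so $\ell\cap\ell'$ would contain three common points, contradicting the orthogovality of $P_i$ and $P_{i'}$ (any line of one meets any line of the other in at most two common points; equivalently, a line of $P_i$ is an oval of $P_{i'}$ and hence carries no three points collinear in $P_{i'}$). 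Therefore at most one row of $C$ fails on $T$, so at least $s-1$ rows have linearly independent entries in the columns of $T$. Since $T$ was arbitrary, $C$ is a CPHF$_{s-1}(s;3,q^2+q+1,q)$ in the projective case, and the identical argument together with the normalization of last coordinates yields an SCPHF$_{s-1}(s;3,q^2,q)$ in the affine case.

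I do not anticipate a serious obstacle; the one point needing care is the passage from the combinatorial hypothesis to the linear-algebraic condition, which rests on two things: that being Desarguesian is exactly what permits coordinatizing $P_i$ so that its lines become genuine lines of $PG(2,q)$, and that collinearity of three points is equivalent to linear dependence of their coordinate vectors. Once these are in hand, the counting step (at most one ``bad'' plane per triple) is immediate from the definition of orthogoval, and the affine variant requires only the observation that the canonical affine embedding $(x,y)\mapsto(x:y:1)$ produces coordinate vectors with nonzero last coordinate.
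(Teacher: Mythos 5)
Your argument is correct. Note that the paper does not prove this statement itself --- it imports it from Colbourn et al.\ \cite{Colbourn2022Brett} --- but your proof is the standard one underlying that citation: coordinatize each Desarguesian plane over $\mathbb{F}_q$ so that collinearity of three distinct points becomes linear dependence of their coordinate vectors, observe that a triple collinear in two of the planes would force two lines, one from each, to share three points in violation of orthogovality, and conclude that each $3$-set of columns has at most one ``bad'' row and hence at least $s-1$ rows with linearly independent entries; the affine normalization $(x,y)\mapsto(x:y:1)$ gives the Sherwood condition. No gaps.
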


Two additional columns can be added for the covering arrays obtained by $s$ mutually orthogoval Desarguesian affine
planes. 
\begin{thm}[\cite{Colbourn2022Brett}] \label{geotocaplus2}
    Suppose that a set of $s$ mutually orthogoval Desarguesian affine
planes exists. Then there exists a $\CA_{s -1} (sq^3  - q; 3, q^2 + 2, q)$.
\end{thm}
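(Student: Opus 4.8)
The plan is to build the array by expanding a Sherwood covering perfect hash family and then adjoining two columns, so that essentially all the content is in the adjunction step. By \cref{geotoca}, the hypothesised $s$ mutually orthogoval Desarguesian affine planes yield an SCPHF$_{s-1}(s;3,q^2,q)$, and applying the Sherwood part of \cref{cphftoCA} with $\lambda'=\lambda=s-1$ gives a CA$_{s-1}\bigl(s(q^3-q)+(s-1)q;\,3,\,q^2,\,q\bigr)$. Since $s(q^3-q)+(s-1)q=sq^3-sq+sq-q=sq^3-q$, this is a CA$_{s-1}(sq^3-q;3,q^2,q)$: the row count and index are already right, and only the number of columns is short by two. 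So it suffices to append two columns without adding any rows.

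To describe the two columns I would use the explicit array underlying \cref{cphftoCA}. Let $\phi_i$ denote the $i$-th row of the SCPHF, regarded as a map from the $q^2$ columns to vectors of $\mathbb{F}_q^3$ with last coordinate $1$. Its rows are: for each $i\in\{1,\dots,s\}$ and each $w\in\mathbb{F}_q^3$ whose first two coordinates are not both zero, a row with entry $\langle\phi_i(j),w\rangle$ in column $j$; together with $s-1$ copies of each of the $q$ constant rows. Now fix two distinct points $u_1,u_2$ of $PG(2,q)$ represented by vectors of $\mathbb{F}_q^3$ with last coordinate $0$, so that $u_1,u_2$ span the plane $x_3=0$, and for $\ell\in\{1,2\}$ adjoin a column with entry $\langle u_\ell,w\rangle$ in the row indexed by $(i,w)$ and entry $0$ in every constant row.

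Triples of old columns are untouched, so they remain $(s-1)$-covered. For a triple $\{j,q^2+1,q^2+2\}$ the vectors $\phi_i(j),u_1,u_2$ form a basis of $\mathbb{F}_q^3$ for every $i$, so the rows $(i,w)$ realise every triple except those of shape $(t,0,0)$, and the zero-padded constant rows supply those; this triple is covered at least $s-1$ times. For a triple $\{j_1,j_2,q^2+\ell\}$, each plane $i$ for which $u_\ell$ is not in the span of $\phi_i(j_1)$ and $\phi_i(j_2)$ contributes, through its rows $(i,w)$, every triple except those of shape $(v,v,0)$, while the $(v,v,0)$ triples come again from the constant rows; hence such a triple is $(s-1)$-covered as soon as, for at least $s-1$ of the $s$ planes, $u_\ell$ is off the span of $\phi_i(j_1)$ and $\phi_i(j_2)$.

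The main obstacle, and the only place the orthogoval hypothesis does nontrivial work, is to secure that last condition: the two points $u_1,u_2$ must be chosen so that for every pair of columns $j_1,j_2$ and each $\ell$, in at most one of the $s$ planes does $u_\ell$ lie in the span of $\phi_i(j_1)$ and $\phi_i(j_2)$. This is not a formal consequence of orthogovality — a line of one plane and a line of another may legitimately share two points — so it has to be read off from the explicit construction of the mutually orthogoval Desarguesian affine planes of \cite{Colbourn2022Brett}, by exhibiting two ideal points whose ``parallel classes'' across the $s$ planes are pairwise almost disjoint. Establishing this separation — or, when the number of planes is large, replacing this naive choice of columns by a sturdier one — is the heart of the argument.
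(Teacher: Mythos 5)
Your reduction is sound as far as it goes: combining \cref{geotoca} with the Sherwood case of \cref{cphftoCA} at $\lambda'=\lambda=s-1$ does yield a CA$_{s-1}(sq^3-q;\,3,\,q^2,\,q)$, the arithmetic $s(q^3-q)+(s-1)q=sq^3-q$ is correct, and your analysis of what two appended columns must satisfy is also correct: a triple $\{j_1,j_2,q^2+\ell\}$ is $(s-1)$-covered exactly when $u_\ell$ lies off the span of $\phi_i(j_1),\phi_i(j_2)$ in all but at most one of the $s$ rows. The problem is that this is where the proof stops. Everything up to that point is a formal consequence of the two quoted theorems and produces only $q^2$ columns; the entire content of the statement being proved is the passage from $q^2$ to $q^2+2$ columns, and that is precisely the step you defer (``it has to be read off from the explicit construction'', ``establishing this separation \ldots is the heart of the argument''). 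As you yourself observe, the required property is strictly stronger than orthogovality: orthogovality permits a line of plane $i$ and a line of plane $i'$ to share two points, whereas you need the two chosen parallel classes (one per plane, in the directions $u_1$ and $u_2$) to be pairwise point-orthogonal, i.e.\ any line of one meets any line of another in at most one point. That is a genuine additional constraint on the coordinatizations $\phi_i$ and on $u_1,u_2$: a single line $L$ of plane $i$, viewed in plane $i'$, is a $q$-arc whose secants occupy most of the $q+1$ directions of plane $i'$, so directions carrying no secant of $L$ are scarce, and a common such direction for all $q$ lines of a parallel class must be exhibited, not assumed. Without that exhibition the argument establishes only a CA$_{s-1}(sq^3-q;\,3,\,q^2,\,q)$, which is not the claimed result.

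Note also that the paper states \cref{geotocaplus2} as a quoted result of \cite{Colbourn2022Brett} and supplies no proof, so there is no internal argument to compare against; judged on its own terms, your proposal identifies the right obstruction and then leaves it unresolved, which is a genuine gap rather than merely a different route.
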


The existence of a pair of orthogoval projective (affine) planes gives \cref{rap}. 

\begin{thm}[\cite{Colbourn2022Brett}] \label{rap}
    There exist a $\CA(2q^3-1; 3, q^2 + q + 1, q)$ for any prime power $q$, and  a $\CA(2q^3-q; 3, q^2 +2, q)$ for even prime power $q$ and $q =3$. 
\end{thm}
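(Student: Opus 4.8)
The plan is to deduce both covering arrays from the covering-perfect-hash-family machinery already assembled in the excerpt. For the first array, I would invoke \cref{geotoca} with $s=2$: a pair of orthogoval Desarguesian projective planes exists for every prime power $q$ (this is the repeatedly-rediscovered result cited at the start of \cref{sec:orthogoval}, and the concrete construction of Raaphorst et al.\ via $G_{q^2+q+1}^1$ and $G_{q^2+q+1}^{-1}$ gives it explicitly), so \cref{geotoca} yields a CPHF$_{1}(2;3,q^2+q+1,q)$. Feeding this into \cref{cphftoCA} with $\lambda=\lambda'=1$, $t=3$, $n=2$ produces a CA$_{1}\bigl(2(q^{3}-1)+1;3,q^2+q+1,q\bigr)=\mathrm{CA}(2q^{3}-1;3,q^2+q+1,q)$, which is the first assertion.

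For the second array I would argue similarly but through the Sherwood/affine branch. When $q$ is even, \cref{geotoca} (affine case, $s=2$) gives an SCPHF$_{1}(2;3,q^2,q)$ from a pair of orthogoval Desarguesian affine planes, whose existence for even $q$ is exactly what Colbourn et al.\ established; the case $q=3$ is the one remaining value where a pair of orthogoval affine planes is known, as recalled in the introduction. Then \cref{cphftoCA} applied to an SCPHF with $\lambda=\lambda'=1$, $t=3$, $n=2$ gives a CA$_{1}\bigl(2(q^{3}-q)+q;3,q^2,q\bigr)=\mathrm{CA}(2q^{3}-q;3,q^2,q)$. Finally I would pick up the two extra columns: by \cref{geotocaplus2} with $s=2$, a pair of orthogoval Desarguesian affine planes yields directly a CA$_{1}(2q^{3}-q;3,q^2+2,q)$, which is the stated bound. (Equivalently, one can note that the SCPHF's underlying affine-subspace structure of $PG(2,q)$ leaves room to append the two ``missing'' coordinate-type columns; \cref{geotocaplus2} packages this.)

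The only real work is bookkeeping: checking that the arithmetic $n(q^{t}-1)+\lambda' = 2(q^{3}-1)+1 = 2q^{3}-1$ and $n(q^{t}-q)+\lambda'q = 2(q^{3}-q)+q = 2q^{3}-q$ matches the claimed sizes, and confirming that $s=2$ is the relevant index so that $\lambda=1$ and $\lambda'\le\lambda$ forces $\lambda'=1$, giving an ordinary (unindexed) covering array. The main obstacle, such as it is, is purely a matter of citation hygiene rather than mathematics: one must ensure that the planes supplied are \emph{Desarguesian} (so that $PG(2,q)$ and $AG(2,q)$ coordinatize over $\mathbb{F}_q$ and \cref{geotoca}/\cref{geotocaplus2} apply), and that the affine orthogoval pair is available precisely in the two cases claimed, namely $q$ even and $q=3$; both points are settled by the results of Colbourn et al.\ quoted in the introduction. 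No new geometry is needed here, since this theorem is a corollary of the preceding framework; the genuinely new constructions of the paper occur later, in the M\"obius-plane setting.
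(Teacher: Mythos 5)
Your derivation is correct and matches what the paper intends: Theorem~\ref{rap} is quoted from Colbourn et al.\ without a proof of its own, and the surrounding text derives it exactly as you do, from the existence of a pair of orthogoval projective (resp.\ affine) planes via \cref{geotoca}/\cref{geotocaplus2} and \cref{cphftoCA}, with the projective case realized concretely as the vertical concatenation of $A(G_{q^2+q+1}^{1})$ and $A(G_{q^2+q+1}^{-1})$. Your arithmetic ($2(q^3-1)+1=2q^3-1$ and $2(q^3-q)+q=2q^3-q$) and your use of \cref{geotocaplus2} to obtain the two extra columns are both right, so there is nothing to add.
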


The $\CA(2q^3-1; 3, q^2 + q + 1, q)$ in \cref{rap} is obtained by the vertical concatenation of $A(G_{q^2 +q + 1}^1)$ and $A(G_{q^2 +q + 1}^{-1})$.  
It is possible to remove $2(q-1)$ columns of $G_{q^2 +q + 1}^1$ and $G_{q^2 +q + 1}^{-1}$ carefully and create new generators $G_{1}$ and $G_{2}$ respectively,  such that $q$ similar rows can be obtained in $A(G_1)$ and $A(G_{2})$. By removing $q$ redundant rows, we have \cref{rap-tor}. This is equivalent to removing $2(q-1)$ points from a pair of orthogoval projective planes for any prime power $q$ to obtain truncated orthogoval affine planes \cite{torres2018}. 

\begin{thm}[\cite{torres2018}] \label{rap-tor}
    There exist a $\CA(2q^3-q; 3, q^2-q +3, q)$ for any prime power $q$. 
\end{thm}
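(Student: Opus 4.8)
The plan is to reconstruct the argument of \cite{torres2018} by trimming the pair of orthogoval projective planes underlying \cref{rap}. Recall that the CA$(2q^3-1;3,q^2+q+1,q)$ of \cref{rap} is the vertical concatenation of $A(G^1_k)$ over $A(G^{-1}_k)$ with one of the two coincident all-zero rows deleted, where $k=q^2+q+1$, $\alpha$ is a primitive element of $\mathbb{F}_{q^3}$, and the columns of $G^1_k$ (respectively $G^{-1}_k$) index the points of an orthogoval projective plane $P_1$ (respectively $P_2$). First I would fix a set $T$ of $2(q-1)$ column indices, delete those columns from both generators to obtain rank-$3$ matrices $G_1,G_2$ of size $3\times(q^2-q+3)$, and form the vertical concatenation of $A(G_1)$ and $A(G_2)$. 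Deleting columns never destroys coverage, so the only things to check are that strength-$3$ coverage still holds after the deletion and that $T$ can be chosen so that $q$ rows of $A(G_1)$ coincide with $q$ rows of $A(G_2)$; deleting those $q$ duplicate rows then leaves $2q^3-q$ rows on $q^2-q+3$ columns.

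For the choice of $T$: a nonzero row of $A(G^1_k)$ vanishes exactly on the $q+1$ columns of some line $H_1$ of $P_1$ (by \cref{rank} and \cref{bibd}), and likewise a nonzero row of $A(G^{-1}_k)$ vanishes on a line $H_2$ of $P_2$. For the restriction of such a nonzero row of $A(G^1_k)$ to equal a row of $A(G_2)$, the two must at least have the same zeros among the surviving columns, i.e.\ $H_1\triangle H_2\subseteq T$. Since $P_1,P_2$ are orthogoval, the line $H_1$ of $P_1$ is an oval in $P_2$, so $H_2$ meets it in at most two points and $|H_1\triangle H_2|\geq 2(q+1)-4=2(q-1)$; as $|T|=2(q-1)$, equality is forced, so $|H_1\cap H_2|=2$ and $T=H_1\triangle H_2$. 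Accordingly I would take $H_1$ a line of $P_1$, $H_2$ a secant of the oval $H_1$ in $P_2$, and $T:=H_1\triangle H_2$. Writing the $j$-th entries of the distinguished nonzero rows as $Tr(\beta_1\alpha^j)$ and $Tr(\beta_2\alpha^{-j})$ respectively (\cref{p1lfsr}), what remains is to arrange that $Tr(\beta_1\alpha^j)$ is a fixed $\mathbb{F}_q$-multiple of $Tr(\beta_2\alpha^{-j})$ for all surviving $j$ (equivalently, on the $q^2-q+1$ indices off $H_1\cup H_2$, where both are nonzero). Given this, rescaling makes the two rows equal on every surviving column, and then the same holds for every $\mathbb{F}_q^*$-multiple of them, pair by pair; together with the all-zero row this produces $q$ common rows.

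Next I would confirm that strength-$3$ coverage survives. Three surviving columns are three points of $PG(2,q)$: if they are not collinear in $P_1$, the corresponding $3\times 3$ submatrix of $G_1$ has rank $3$ and $A(G_1)$ already contains all $q^3$ triples on them; if they are collinear in $P_1$, then by orthogovality they are not collinear in $P_2$, so $A(G_2)$ contains all $q^3$ triples on them. Removing the $q$ duplicate rows loses nothing, because each removed row of $A(G_2)$ agrees with a surviving row of $A(G_1)$ on all columns, hence contributes no triple not already present. With the row count $2q^3-q$ and the column count $q^2+q+1-2(q-1)=q^2-q+3$, the theorem follows.

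The main obstacle is the proportionality condition in the second step: producing, uniformly over all prime powers $q$, a line $H_1$ of $P_1$, a secant $H_2$ of the oval $H_1$ in $P_2$, and scalars for which $Tr(\beta_1\alpha^j)$ and $Tr(\beta_2\alpha^{-j})$ are proportional on exactly the $q^2-q+1$ surviving indices. This asks a single ``generalized polynomial'' in $\alpha^j$ of degree at most $2q^2$ to vanish on precisely the right subset of $\mathbb{F}_{q^3}$, and I expect it to require a tailored choice of $\alpha$, of the basis used in \cref{Gengen}, and of the two lines --- which is exactly the content supplied by \cite{torres2018}. The remaining pieces (the reduction, the counting, and the coverage verification) are routine.
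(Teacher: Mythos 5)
Your reconstruction matches the paper's treatment: the paper does not prove \cref{rap-tor} but cites \cite{torres2018}, prefacing it with exactly the sketch you elaborate --- delete $2(q-1)$ columns from $G^1_{q^2+q+1}$ and $G^{-1}_{q^2+q+1}$ so that the two resulting arrays share $q$ rows, then discard the duplicates. Your surrounding reasoning is sound (the forced conclusion that $T=H_1\triangle H_2$ with $|H_1\cap H_2|=2$, the coverage argument via orthogovality, and the row and column counts), and the one step you flag as the genuine obstacle --- exhibiting the lines and the scalar making the two trace sequences agree on every surviving column --- is precisely the content the paper also imports from \cite{torres2018} without proof, so relative to the paper's own text your proposal is not missing anything the paper supplies.
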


\cref{tab:sum} shows different geometric objects and corresponding covering arrays. 
\section{Existence of $3$ anti-cocircular truncated M{\"o}bius planes} \label{sec:orthogovoid}
In this section, we introduce three truncated M{\"o}bius planes with the same point set but different circles. Then, we prove they are anti-cocircular. 
We are interested in circles of the M{\"o}bius plane $(\mathcal{M}, \mathcal{C})$ in \cref{3design}, that contain the column index zero. We will present these circles in \cref{blockswithzero} using the elements of the set $D$ that is given in \cref{dfs}. 
For the rest of the paper, we assume $q$ is an odd prime power and $\alpha$ is a primitive element of $\mathbb{F}_{q^4}$. 

\begin{con} [{\cite[VI, chapter 18]{handbookdif}}] \label{dfs}
    The set of integers $D = \lbrace i \in \mathbb{Z}_{(q^4 -1)/(q-1)}: \Tr(\alpha^i) = 0 \rbrace$ is a $(\frac{q^4 -1}{q-1}, \frac{q^3 -1}{q-1}, \frac{q^2 -1}{q-1})$-difference set over $\mathbb{Z}_{(q^4 -1)/(q-1)}$.  
The set $D$ can be constructed from a primitive polynomial $f(x) = x^4 + \sum_{j = 1}^{4} b_j x^{4-j}$ over $\mathbb{F}_q$ with root $\alpha$. Consider the recurrence relation $\gamma_j = - \sum_{j  =1}^{4} b_j \gamma_{4-j}$ with the initial values $\gamma_j = \Tr(\alpha^j)$, for $0 \leq j \leq 3$. Then, $D = \lbrace 0 \leq j \leq \frac{q^4 -1}{q-1}: \gamma_j = 0 \rbrace$. 
\end{con}

 The  blocks of the $2$-$(\frac{q^4 -1}{q-1}, \frac{q^{3}-1}{q-1}, \frac{q^{2}-1}{q-1})$ design in \cref{bibd} for $m=4$ are the translates of $D$ given in \cref{dfs}. 

\begin{rem} \label{notzeroD}
    By \cref{dfs} and the definition of trace, it is clear that $\Tr(\alpha^0) = \Tr(1) = m = 4 \neq 0$, because $q$ is odd. Thus, $0 \notin D$.
\end{rem}

\begin{prop} \label{traceprop}
    Let $D$ be the difference set in \cref{dfs}. The equation $\Tr(\alpha^{(q+1)x}) = 0$ has a unique solution $x = \frac{q^2 + 1}{2}$ for $ 0 \leq x < q^2 + 1$. Thus, $x(q+1) \in D$ if and only if $x = \frac{q^2 +1}{2}$. 
\end{prop}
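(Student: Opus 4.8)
The plan is to expand the absolute trace $\mathrm{Tr}=\mathrm{Tr}_{\mathbb{F}_{q^4}/\mathbb{F}_q}$ on the single element $\alpha^{(q+1)x}$ and factor the resulting sum. Write $\theta=\alpha^{q+1}$, so $\alpha^{(q+1)x}=\theta^{x}$, and set $\epsilon=\alpha^{q^2-1}=\theta^{\,q-1}$, which generates the (unique) cyclic subgroup of $\mathbb{F}_{q^4}^{*}$ of order $q^2+1$. Using $q^i-1=(q-1)\,\tfrac{q^i-1}{q-1}$ together with $\theta^{\,q-1}=\epsilon$, I would first obtain
\[
\mathrm{Tr}(\alpha^{(q+1)x})=\sum_{i=0}^{3}\theta^{q^i x}=\theta^{x}\sum_{i=0}^{3}\theta^{(q^i-1)x}=\theta^{x}\bigl(1+\epsilon^{x}+\epsilon^{(q+1)x}+\epsilon^{(q^2+q+1)x}\bigr),
\]
and then use the key reduction $q^2+q+1\equiv q\pmod{q^2+1}$ (equivalently $\epsilon^{q^2}=\epsilon^{-1}$), which turns the bracket into $1+\epsilon^{x}+\epsilon^{qx}+\epsilon^{x}\epsilon^{qx}$. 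Hence
\[
\mathrm{Tr}(\alpha^{(q+1)x})=\theta^{x}\,(1+\epsilon^{x})(1+\epsilon^{qx}).
\]

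Since $\theta^{x}\neq 0$, this vanishes exactly when $\epsilon^{x}=-1$ or $\epsilon^{qx}=-1$, and here I would invoke that $q$ is odd. First, $\gcd(q,q^2+1)=1$, so $y\mapsto y^{q}$ is a bijection of $\langle\epsilon\rangle$ fixing $-1$ (as $(-1)^q=-1$); therefore $\epsilon^{qx}=(\epsilon^{x})^{q}=-1$ iff $\epsilon^{x}=-1$, so only one condition remains. Second, $q^2+1$ is even, so $-1$ is the unique element of order $2$ in the cyclic group $\langle\epsilon\rangle$ of order $q^2+1$, namely $\epsilon^{(q^2+1)/2}$. Thus $\mathrm{Tr}(\alpha^{(q+1)x})=0$ iff $x\equiv\tfrac{q^2+1}{2}\pmod{q^2+1}$, which for $0\le x<q^2+1$ forces $x=\tfrac{q^2+1}{2}$; this is the first assertion.

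For the statement about $D$: for $0\le x<q^2+1$ we have $0\le(q+1)x<(q+1)(q^2+1)=\tfrac{q^4-1}{q-1}$, so $(q+1)x$ is already a residue modulo $\tfrac{q^4-1}{q-1}$, and by \cref{dfs}, $x(q+1)\in D$ iff $\mathrm{Tr}(\alpha^{x(q+1)})=0$, iff $x=\tfrac{q^2+1}{2}$. I do not anticipate a serious obstacle: the only real content is choosing to write the trace as $\sum_i\theta^{q^i x}$ and to reduce exponents modulo the order $q^2+1$ of $\alpha^{q^2-1}$, after which the factorization is immediate; the single point needing care is the elementary parity bookkeeping ($q^2+1$ even, $(q^2+1)/2$ its half, $\gcd(q,q^2+1)=1$) that makes the hypothesis ``$q$ odd'' essential — and the excluded value $x=0$ gives $\mathrm{Tr}(1)=4\neq 0$, consistent with \cref{notzeroD}. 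One could alternatively read this geometrically, as the statement that $\mathrm{Tr}=0$ cuts $PG(3,q)$ in a plane meeting the ovoid $O$ of \cref{genovoid} in a single point (rather than a $(q+1)$-arc) by Barlotti--Panella, but pinning the intersection down to one point still comes down to the computation above.
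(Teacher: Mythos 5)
Your proof is correct and follows essentially the same route as the paper: both expand the trace and factor it as $\alpha^{(q+1)x}\,(1+\epsilon^{x})(1+\epsilon^{qx})$ with $\epsilon=\alpha^{q^2-1}$ (the paper writes these factors as $1+\alpha^{q^2x-x}$ and $1+\alpha^{q^3x-qx}$). Where the paper then solves each condition $\epsilon^{x}=-1$, $\epsilon^{qx}=-1$ separately by explicit modular arithmetic on exponents modulo $q^4-1$ (with a small case analysis on the integer $\lambda$), you work inside the cyclic subgroup $\langle\epsilon\rangle$ of order $q^2+1$, note that the two conditions coincide because $\gcd(q,q^2+1)=1$ and $(-1)^q=-1$, and identify $-1=\epsilon^{(q^2+1)/2}$ as the unique involution --- a cleaner finish that reaches the same conclusion with less bookkeeping.
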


\begin{proof}
    Using \cref{defin:trace},
\begin{equation}
    \begin{aligned}
        Tr \left (\alpha^{(q+1)x} \right ) &=    \alpha^{(q+1)x} + \left ( \alpha^{(q+1)x} \right )^{q} + \left ( \alpha^{(q+1)x} \right )^{q^2} + \left ( \alpha^{(q+1)x} \right )^{q^3} \\
        & = \alpha^{(q + 1)x} \left ( 1 + \alpha^{(q -1)(q +1)x} + \alpha^{(q^2 - 1)(q +1)x} + \alpha^{(q^3 -1)(q +1)x} \right )  \\
        & = \alpha^{(q + 1)x} \left ( 1 + \alpha^{(q -1)(q +1)x} + \alpha^{(q^2 - 1)(q +1)x} + \alpha^{(q^2 -1)qx} \right ) \quad \left ( \text{ since }\alpha^{q^4} = \alpha \right ) \\
        & = \alpha^{(q+1)x} \left (1 + \alpha^{q^{3}x - qx} \right ) \left (1 + \alpha^{q^{2}x - x} \right ) \\
    \end{aligned}
\end{equation}
  	Since $\alpha^{(q+1)x} \neq 0$, then $\Tr(\alpha^{(q+1)x}) = 0$ if and only if $1 + \alpha^{q^{3}x - qx} = 0 $ or $(1 + \alpha^{q^{2}x - x}) = 0$. Note that $\alpha^{\frac{q^4 - 1}{2}} = -1$. 
	If $(1 + \alpha^{q^{3}x - qx}) = 0 $, then $\alpha^{q^{3}x - qx} = \alpha^{\frac{q^4 - 1}{2}}$ and $q^{3}x - qx = \frac{q^4 - 1}{2}$ modulo $q^4 -1$. Thus, for some integer $\lambda \geq 0$, we have $q^{3}x - qx - (\frac{q^4 - 1}{2}) = \lambda (q^4 - 1)$ which leads to $2xq - (q^2 + 1) = 2\lambda (q^2 + 1)$. 
	Hence, $2xq = (q^2 + 1)(2 \lambda + 1)$. If $\lambda = 0$, then  $x = \frac{q^2 + 1}{2q}$. Since $q^2 + 1$ is not divisible by $q$, there is no possible solution for $x$. If $\lambda \neq 0$, since $\gcd(q, q^2 +1) = 1$, then $q \mid 2 \lambda + 1$. By the assumption, $0 \leq x < q^2 +1$, so $(q^2 +1)(2\lambda + 1) = 2xq < 2(q^2 +1)q$ that gives $0 < 2 \lambda + 1 < 2q$. Since $q \ | \ 2 \lambda + 1$ and $0 < 2\lambda + 1 < 2q$, we conclude that $q = 2\lambda +1$. Therefore, $x = \frac{q^2 +1}{2}$.
	If $(1 + \alpha^{q^{2}x - x}) = 0$, then $\alpha^{q^{2}x - x} = \alpha^{\frac{q^4 - 1}{2}}$ and $q^{2}x - x = \frac{q^4 - 1}{2}$ modulo $q^4 -1$. Thus, for some integer $\lambda \geq 0$, we have $q^{2}x - x - (\frac{q^4 - 1}{2}) = \lambda (q^4 - 1)$ which leads to $2x - (q^2 + 1) = 2\lambda (q^2 + 1)$. Hence, $x = (q^2 + 1)(\frac{1}{2} + \lambda)$. Since $0 \leq x < q^2 + 1$, we must have $\lambda < \frac{1}{2}$ which implies that $\lambda = 0$, and thus $x = \frac{q^2 + 1}{2}$. Therefore, we conclude that $x = \frac{q^2 + 1}{2}$ is the unique solution to the equation, which shows that $x(q+1) \in D$ if and only if $x = \frac{q^2 +1}{2}$. 
\end{proof}
\begin{prop} \label{blockswithzero}
  Let $D$ be the difference set in \cref{dfs}. For $x \in D$, let 
    \begin{equation} \label{mblock}
        C_{x} = \lbrace i: x + (q+1)i \in D, 0 \leq i < q^2 + 1 \rbrace.
    \end{equation} 
    
    Let $(\mathcal{M}, \mathcal{C})$ be the M{\"o}bius plane in \cref{3design}. Let $x_0 = \frac{(q^2 + 1)(q+1)}{2}$. Then $C_{x_0} = \lbrace 0 \rbrace$. Moreover, $\lbrace C_x : x \in D \setminus \lbrace x_0 \rbrace \rbrace = \lbrace B \in \mathcal{C} : 0 \in B \rbrace$. 
    \end{prop}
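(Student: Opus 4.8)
The plan is to translate the statement entirely into facts about the difference set $D$ and the trace, using \cref{traceprop} and the characterization of circles in \cref{3design}. First I would verify the claim $C_{x_0}=\{0\}$ for $x_0=\tfrac{(q^2+1)(q+1)}{2}$. By definition, $i\in C_{x_0}$ iff $x_0+(q+1)i\in D$, i.e. iff $Tr(\alpha^{x_0+(q+1)i})=0$. Writing $x_0+(q+1)i=(q+1)\bigl(\tfrac{q^2+1}{2}+i\bigr)$, this is exactly the equation $Tr(\alpha^{(q+1)y})=0$ with $y=\tfrac{q^2+1}{2}+i$, which by \cref{traceprop} has the unique solution $y=\tfrac{q^2+1}{2}$ modulo the relevant modulus; hence $i\equiv 0$, so $C_{x_0}=\{0\}$. (One must be a little careful that $\tfrac{q^2+1}{2}+i$ reduced modulo $q^2+1$ stays in the range $[0,q^2+1)$ where \cref{traceprop} applies; since \cref{traceprop} really controls $Tr(\alpha^{(q+1)y})$ as a function of $y \bmod (q^2+1)$, this reduction is harmless.) I would also note $x_0\in D$: indeed $x_0=(q+1)\cdot\tfrac{q^2+1}{2}$ and by \cref{traceprop} with $x=\tfrac{q^2+1}{2}$ we get $Tr(\alpha^{x_0})=0$, so $x_0\in D$ and the definition of $C_{x_0}$ makes sense.

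Next I would establish the bijection between $\{C_x : x\in D\setminus\{x_0\}\}$ and $\{B\in\mathcal C : 0\in B\}$. The key dictionary: by \cref{3design}, a circle $B\in\mathcal C$ is a set $\{i_1,\dots,i_{q+1}\}$ of column indices of $G^{q+1}_{q^2+1}$ such that $A_{r,i_1}=\dots=A_{r,i_{q+1}}=0$ for some row $r$ of $A=A(G^{q+1}_{q^2+1})$. By \cref{p1lfsr}/\cref{rank}-style reasoning each such row $r$ corresponds (via the trace) to an element $\beta\in\mathbb F_{q^4}$, and $A_{r,i}=0$ iff $Tr(\beta\alpha^{(q+1)i})=0$; writing $\beta=\alpha^x$ this says $x+(q+1)i\in D$, i.e. $i\in C_x$. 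The condition $0\in B$ forces $Tr(\beta)=0$, i.e. $\alpha^x=\beta$ with $x\in D$. So every circle through $0$ equals $C_x$ for some $x\in D$, and conversely each $C_x$ with $x\in D$ is the intersection of the ovoid with the plane $Tr(\alpha^x\cdot(-))=0$, hence a circle (plane section of the ovoid) containing $0$ — with the single exception $x=x_0$, where by \cref{traceprop} the plane is tangent to the ovoid at the point indexed $0$ and $C_{x_0}=\{0\}$ is not a circle. This exactly accounts for removing $x_0$ from $D$.

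Finally I would check that $x\mapsto C_x$ is injective on $D\setminus\{x_0\}$, which amounts to: two distinct residues $x,x'\in D$ give the same $(q+1)$-point plane section only if the associated planes coincide, i.e. only if $\alpha^{x}$ and $\alpha^{x'}$ are $\mathbb F_q$-proportional; but the relevant powers lie in the image of multiplication by $(q+1)$ and the exponents are taken modulo $\tfrac{q^4-1}{q-1}$, so proportionality forces $x\equiv x'$. Equivalently, distinct secant planes cut the ovoid in distinct circles (two distinct planes meet in a line, which meets the ovoid-cap in at most $2$ points, so the $(q+1)$-sets differ for $q\geq 2$), so the map is injective, and a counting check — $|D\setminus\{x_0\}|=\tfrac{q^3-1}{q-1}-1=q^2+q$, which matches the number of circles through a fixed point of a M\"obius plane of order $q$ — confirms the map is a bijection.

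The main obstacle I anticipate is the bookkeeping in the second paragraph: making the correspondence "row $r$ of $A$ $\leftrightarrow$ element $\beta=\alpha^x$ with $A_{r,i}=0\iff x+(q+1)i\in D$" fully precise, in particular handling the all-zero row, the reduction of exponents modulo $\tfrac{q^4-1}{q-1}$ versus $q^4-1$, and ensuring that $0\in B$ is equivalent to $x\in D$ (not just $x\bmod(q^2+1)$). Everything else — the application of \cref{traceprop} for $C_{x_0}$, injectivity, and the final count — is routine once this dictionary is set up.
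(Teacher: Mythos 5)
Your proposal is correct and follows essentially the same route as the paper: the first part is exactly the paper's application of \cref{traceprop} (including the observation that vanishing of $Tr(\alpha^{(q+1)y})$ depends only on $y \bmod (q^2+1)$), and your row-to-$\beta=\alpha^x$ dictionary is precisely the paper's identification of circles through $0$ with translates $D - x$ of the difference set having $x \in D$, via \cref{bibd} and \cref{3design}. The injectivity and counting check in your last paragraph is harmless extra confirmation but not needed, since the claim is a set equality established by the two inclusions you already prove.
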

    \begin{proof} 
    We prove $C_{x_0} = \lbrace 0 \rbrace$.  
    By \cref{traceprop}, $x_0 \in D$, so $0 \in C_{x_0}$. Let $a \in C_{x_0}$. Then $x_0 + (q+1)a = (q+1)(\frac{q^2 + 1}{2} + a) \in D$. This implies that $\Tr(\alpha^{(q+1)(\frac{q^2 + 1}{2} + a)}) = 0$ with $0 \leq a < q^2 +1$. So, $\frac{q^2 + 1}{2} + a$ is a solution of the equation $\Tr(\alpha^{(q+1)x}) = 0$. By \cref{traceprop}, $a = 0$. So, $C_{x_0} = \lbrace 0 \rbrace$.  
    
     Note that for any $B \in \mathcal{C}$, $B = \lbrace i_1 , i_2, \ldots, i_{q+1} \rbrace$ is a set of column indices of $A = A(G^{q+1}_{q^2+1})$ where $A_{r, i_1} = A_{r, i_2} = \cdots = A_{r, i_{q+1}} = 0$ for a non-zero row $r$ in $A(G^{q+1}_{q^2+1})$. Then, by \cref{bibd}, $(q+1)i_j \in D_r$, for all $i_j \in B$, where $D_r$ is a block of the $2$-$(\frac{q^4 - 1}{q-1}, \frac{q^3 - 1}{q-1}, \frac{q^2 - 1}{q-1})$ design $(\mathbb{Z}_{(q^4 -1)/(q-1)}, \mathcal{B})$ defined in that theorem. 

     Let $\mathcal{A} = \lbrace C_x : x \in D \setminus \lbrace x_0 \rbrace \rbrace$ and $\mathcal{C}^0 = \lbrace B \in \mathcal{C} : 0 \in B \rbrace$.
First, we prove that $\mathcal{A} \subseteq \mathcal{C}^0$. By the definition of $C_x$, it is clear that $0 \in C_x$. For all $i \in C_x$, $(q+1)i \in D-x$ where $D-x \in \mathcal{B}$, which implies that $C_x \in \mathcal{C}^0$. 
Now, we prove $\mathcal{C}^0 \subseteq \mathcal{A}$. Let $B = \lbrace i_1 , i_2, \ldots, i_{q+1} \rbrace \in \mathcal{C}^0$ such that $i_j = 0$ for some $ 1 \leq j \leq q+1$. Since $A_{r, i_1} = A_{r, i_2} = \cdots = A_{r, i_{q+1}} = 0$ for a non-zero row $r$ in $A(G^{q+1}_{q^2+1})$, $(q+1)i_j \in D_r$ for all $i_j \in B$ where $D_r \in \mathcal{B}$. Since $D_r$ is a translate of $D$, then there exists $x \in \mathbb{Z}_{(q^4 -1)/(q-1)}$ such that $D = D_r + x$. So, $x + (q+1)i_j \in D$. Since $0 \in D_r$, then $x \in D$ which implies that $B = C_{x}$. 
    \end{proof}

In the following, we study some properties of the M{\"o}bius plane in \cref{3design} and its circles, especially the circles containing the zero element given by \cref{mblock}.
\begin{rem} \label{shift}
    If $0, i, j, k \in C_{x}$,
    then by \cref{rank}, there exist non-zero $a, b, c \in \mathbb{F}_q$ such that $1 +  a\alpha^{i(q+1)}  + b\alpha^{j(q+1)} + c\alpha^{k(q+1)} = 0$. 
     By multiplying $\alpha^{d(q+1)}$ to both sides, we obtain a block $B$ which is a translate of $C_x$ such that $d, i + d, j + d, k + d \in B$ for $ 1 \leq d$.  
\end{rem}
\begin{prop} \label{symCx}
    If $0, i, -i, j \in C_x$, $i, j \neq \left \lbrace 0, \frac{q^2 + 1}{2} \right \rbrace$, then $-j \in C_x$. 
\end{prop}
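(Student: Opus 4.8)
The plan is to exploit the characterization of membership in $C_x$ via the trace condition, together with the linear-dependence description of cocircular points from \cref{rank} and \cref{shift}. Recall that $i \in C_x$ means $x + (q+1)i \in D$, i.e. $Tr(\alpha^{x + (q+1)i}) = 0$. Write $\beta = \alpha^{q+1}$ and $\delta = \alpha^{x}$; then $i \in C_x$ iff $Tr(\delta \beta^{i}) = 0$. The hypothesis $0, i, -i, j \in C_x$ translates to $Tr(\delta) = Tr(\delta\beta^{i}) = Tr(\delta\beta^{-i}) = Tr(\delta\beta^{j}) = 0$, and the goal is $Tr(\delta\beta^{-j}) = 0$.

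The key structural fact I would use is the factorization already computed in the proof of \cref{traceprop}: for any $y$,
\begin{equation}
  Tr(\alpha^{y}) = \alpha^{y}\bigl(1 + \alpha^{q^{3} - q \text{ applied to } y\text{-part}}\bigr)\bigl(1 + \alpha^{q^{2} - 1 \text{ applied to } y\text{-part}}\bigr),
\end{equation}
more precisely $Tr(\alpha^{y}) = \alpha^{y}(1 + \alpha^{(q^{3}-q)y'})(1 + \alpha^{(q^{2}-1)y'})$ when $y$ has the right shape; what matters is that with $\beta = \alpha^{q+1}$ one gets a clean product formula. I would set up $f(i) := 1 + \delta'\,\beta^{(q-1)i}$-type factors so that $Tr(\delta\beta^{i}) = (\text{nonzero})\cdot g(i)\cdot h(i)$ where $g$ and $h$ are each of the form $1 + (\text{constant})\cdot\zeta^{i}$ for suitable roots of unity $\zeta$. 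Then $i \in C_x$ iff $g(i) = 0$ or $h(i) = 0$. The excluded values $i \neq 0, \frac{q^{2}+1}{2}$ are exactly the ones preventing both $g$ and $h$ from simultaneously vanishing or from being forced trivial, by \cref{traceprop}; so for admissible $i$, membership $i\in C_x$ puts $i$ in the zero set of exactly one of $g, h$. Since each of $g, h$ is a function of $i$ of the form $1 + c\zeta^{i}$, its zero set is a single residue class, hence symmetric only in a controlled way: $g(i) = 0$ and $g(-i) = 0$ together force $c^{2}\zeta^{0}$-type relations pinning down $c$, and similarly for $h$. The heart of the argument is then: from $0, i, -i \in C_x$ deduce that the relevant constant(s) satisfy a quadratic-residue-like constraint, and then show the same constraint makes $j \in C_x \Rightarrow -j \in C_x$ for any admissible $j$.

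Concretely, I would argue as follows. From $0 \in C_x$ and the product formula, either $g(0) = 0$ or $h(0) = 0$; WLOG analyze the cases. Using $i, -i \in C_x$ with $i \neq 0, \frac{q^2+1}{2}$, rule out the degenerate possibilities (one of $g, h$ identically zero, which \cref{traceprop} forbids) and conclude that $g$ (say) vanishes at both $i$ and $-i$, forcing $\zeta^{2i} = 1$-type identity, hence $\zeta^{i} = \pm 1$; combined with $i \neq 0, \frac{q^2+1}{2}$ this actually forces the constant in $g$ to be $\pm 1$ and hence $g(i) = 0 \iff g(-i) = 0$ for all $i$, i.e. the zero set of $g$ is symmetric about $0$. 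Then if $j \in C_x$, either $g(j) = 0$, in which case $g(-j) = 0$ by symmetry so $-j \in C_x$; or $h(j) = 0$, in which case I need the zero set of $h$ to also be symmetric — which follows by running the same extraction on $h$ using the fact that $0 \in C_x$ contributes to whichever factor vanishes at $0$, plus the already-derived pinning of the $g$-constant forces the $h$-constant via the relation $g(0)h(0)$ being tied to $Tr(\delta) = 0$. Either way $-j \in C_x$.

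The main obstacle I anticipate is the bookkeeping of exponents modulo $q^{4}-1$ versus modulo $q^{2}+1$ (since $i$ ranges over $\mathbb{Z}_{q^{2}+1}$ but $\alpha$ has order $q^{4}-1$), and in particular verifying that the two factors $g, h$ really do have order dividing $q^{2}+1$ as functions of $i$ so that "zero set is a single residue class" is meaningful in $\mathbb{Z}_{q^{2}+1}$. A cleaner route, which I would try first, bypasses the factorization entirely: use \cref{rank} to write the cocircularity of $\{0, i, -i, j\}$ as a linear dependence $1 + a\beta^{i} + b\beta^{-i} + c\beta^{j} = 0$ with $a, b, c \in \mathbb{F}_q^{*}$, apply the Frobenius-type symmetry $i \mapsto -i$ that the M\"obius plane circles enjoy (this is essentially what \cref{symCx} asserts and is reminiscent of the $\alpha \leftrightarrow \alpha^{-1}$ duality used for orthogoval planes), and read off that $1 + a'\beta^{-i} + b'\beta^{i} + c'\beta^{-j} = 0$ is also a valid dependence, forcing $-j \in C_x$. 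Making the symmetry map respect the specific generator $G^{q+1}_{q^2+1}$ — i.e. checking that negating the exponent sends the ovoid to itself and circles to circles — is the one nontrivial verification; the conditions $i, j \neq 0, \frac{q^2+1}{2}$ are there precisely to avoid the fixed points of this involution where the argument would degenerate.
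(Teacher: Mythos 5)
Your first route (the factorization of the trace) has a genuine gap: the product formula from \cref{traceprop}, $Tr(\alpha^{(q+1)x}) = \alpha^{(q+1)x}(1+\alpha^{(q^3-q)x})(1+\alpha^{(q^2-1)x})$, is an identity that depends on the exponent being a multiple of $q+1$ (it is exactly the step where the third term of the expanded trace equals the product of the second and fourth). For membership in $C_x$ you need to test $Tr(\alpha^{x+(q+1)i})$ where $x\in D$ is in general \emph{not} a multiple of $q+1$, and there the trace does not factor into two binomials. So the entire $g(i)\cdot h(i)$ machinery, and the ``zero set is a single residue class'' reasoning built on it, does not get off the ground.

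Your second, ``cleaner'' route is essentially the paper's proof, but you have deferred exactly the step that constitutes it. The paper writes the cocircularity of $\{0,i,-i,j\}$ as $1+a\alpha^{i(q+1)}+b\alpha^{-i(q+1)}+c\alpha^{j(q+1)}=0$ with $a,b,c\in\mathbb{F}_q^{*}$ (via \cref{rank}), raises the identity to the power $q^2$ (which fixes $a,b,c$), and uses the concrete identity $\alpha^{i(q+1)q^2}=\beta^{i}\alpha^{-i(q+1)}$ with $\beta=\alpha^{(q+1)(q^2+1)}\in\mathbb{F}_q$ (this is your ``one nontrivial verification''; it follows from $i(q+1)q^2 = i(q+1)(q^2+1)-i(q+1)$). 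The result is an $\mathbb{F}_q$-linear dependence among $\alpha^0,\alpha^{\pm i(q+1)},\alpha^{-j(q+1)}$, i.e.\ a circle through $0,i,-i,-j$; the final step, which you omit, is that $0,i,-i$ are three \emph{distinct} points (this is where $i\neq 0,\frac{q^2+1}{2}$ is used) and hence determine a unique circle, which must be $C_x$, so $-j\in C_x$. If you carry out that computation and add the uniqueness argument, your second route becomes the paper's proof verbatim; as written, the proposal identifies the right mechanism but does not yet prove the statement.
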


\begin{proof}
   Since $0, i, -i, j \in C_x$, by \cref{rank}, then $\lbrace \alpha^x, \alpha^{x + i(q+1)}, \alpha^{x-i(q+1)}, \alpha^{x +j(q+1)} \rbrace$ are linearly dependent, so there exist non-zero  $a, b, c \in \mathbb{F}_q$ such that 
   \begin{equation} \label{eq:one}
       1 + a\alpha^{i(q+1)} + b\alpha^{-i(q+1)} + c\alpha^{j(q+1)} = 0.
   \end{equation}
   Let $\beta = \alpha^{(q+1)(q^2 + 1)}$, a primitive element in $\mathbb{F}_q$ (See \cref{subprim}). 
    By raising both sides of the \cref{eq:one} to the power of $q^2$, we have 

    \begin{equation*} 
      \left( 1 + a\alpha^{i(q+1)} + b\alpha^{-i(q+1)} + c\alpha^{j(q+1)} \right)^{q^2} = 
      1 + a\alpha^{i(q+1)q^2} + b\alpha^{-i(q+1)q^2} + c\alpha^{j(q+1)q^2} = 0. 
   \end{equation*}
 For $ 0 \leq i < q^2 + 1$, $\alpha^{i(q+1)q^2} = \beta ^{i} \alpha^{-i(q+1)}$ where powers are modulo $q^4 -1 $. So, 
   \begin{equation} \label{eq:two}
       1 + (a \beta^{i})\alpha^{-i(q+1)} + (b \beta^{-i})\alpha^{i(q+1)} + (c \beta^{j})\alpha^{-j(q+1)}  = 0. 
   \end{equation}

   Since $\beta \in \mathbb{F}_q$, \cref{eq:two} shows that there exist a block $B$ in $3$-$(q^2+1, q+1, 1)$ design such that $0, i, -i, -j \in B$. Since $0, i, -i$ determine a unique block $C_x$, then $0, i, -i, j, -j \in C_x$.  
\end{proof}

\begin{prop} \label{centerBlock}
Suppose $0, \frac{q^2 +1}{2} \in C_x$, then for any $ i \in C_x$, $-i \in C_x$.
\end{prop}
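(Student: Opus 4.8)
The plan is to exhibit a collineation of $PG(3,q)$ that maps the ovoid $O$ of \cref{genovoid} onto itself, induces the permutation $j\mapsto -j$ on the index set $\mathcal M$, and fixes the secant plane of every circle of $\mathcal C$ passing through the two points of $O$ indexed $0$ and $\frac{q^2+1}{2}$. Since the hypothesis forces $C_x\in\mathcal C$ (by \cref{blockswithzero}; the degenerate set $C_{x_0}=\{0\}$ cannot contain $\frac{q^2+1}{2}$), fixing the secant plane of $C_x$ fixes $C_x$ itself, which is precisely the assertion $C_x=-C_x$. The first ingredient I would establish is that $\mu:=\alpha^{(q+1)(q^2+1)/2}$ — the coordinate vector of the ovoid point indexed $\frac{q^2+1}{2}$ (by \cref{genovoid}) — satisfies $\mu\in\mathbb F_{q^2}\setminus\mathbb F_q$: indeed $\mu^{q^2-1}=\alpha^{\frac{q+1}{2}(q^4-1)}=1$ while $\mu^{q-1}=\alpha^{(q^4-1)/2}=-1\neq 1$ since $q$ is odd. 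Consequently $\{1,\mu\}$ is an $\mathbb F_q$-basis of $\mathbb F_{q^2}$, so the two ovoid points indexed $0$ and $\frac{q^2+1}{2}$ span the line $\mathbb P(\mathbb F_{q^2})$ of $PG(3,q)$.

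The collineation will be the Frobenius map $\Phi\colon\mathbb F_{q^4}\to\mathbb F_{q^4}$, $\Phi(z)=z^{q^2}$. Its fixed field is $\mathbb F_{q^2}\supseteq\mathbb F_q$, so $\Phi$ is $\mathbb F_q$-linear, and being a field automorphism it is invertible, hence induces a collineation of $PG(3,q)$. Writing $\beta:=\alpha^{(q+1)(q^2+1)}=\alpha^{(q^4-1)/(q-1)}\in\mathbb F_q^*$ (primitive in $\mathbb F_q$ by \cref{subprim}), the identity $\alpha^{x(q+1)q^2}\cdot\alpha^{x(q+1)}=\beta^x$ gives $\alpha^{x(q+1)q^2}=\beta^x\alpha^{-x(q+1)}$, and since $\beta^x\in\mathbb F_q^*$, $\Phi$ carries the ovoid point indexed $x$ to the one indexed $-x\bmod(q^2+1)$. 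Thus $\Phi(O)=O$ and $\Phi$ induces negation on $\mathcal M$.

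To finish, suppose $0,\frac{q^2+1}{2}\in C_x$ and let $W\subseteq\mathbb F_{q^4}$ be the $\mathbb F_q$-span of the coordinate vectors of the $q+1$ points of $C_x$; since any three points of the cap $O$ are linearly independent and $C_x$ is coplanar, $\dim_{\mathbb F_q}W=3$, and $\mathbb P(W)$ is the secant plane cutting out $C_x$, so that $O\cap\mathbb P(W)=C_x$. From $0\in C_x$ we get $1\in W$, and from $\frac{q^2+1}{2}\in C_x$ we get $\mu\in W$, hence $\mathbb F_{q^2}\subseteq W$. Picking $v\in W\setminus\mathbb F_{q^2}$, the identity $v^{q^2}=(v+v^{q^2})-v$ with $v+v^{q^2}\in\mathbb F_{q^2}$ shows $\Phi(v)\in\mathbb F_{q^2}+\mathbb F_q v=W$, so $\Phi(W)\subseteq W$ and therefore $\Phi(W)=W$; that is, $\Phi$ fixes the plane $\mathbb P(W)$. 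Consequently $\Phi(C_x)=\Phi\big(O\cap\mathbb P(W)\big)=O\cap\mathbb P(W)=C_x$, while $\Phi(C_x)=\{-j:j\in C_x\}$ by the previous paragraph; hence $C_x=-C_x$, i.e.\ $-i\in C_x$ for every $i\in C_x$.

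I expect the crux to be the last step, specifically the recognition that the hypothesis ``$0,\frac{q^2+1}{2}\in C_x$'' is exactly the condition that the secant plane of $C_x$ contains the $\Phi$-fixed line $\mathbb P(\mathbb F_{q^2})$. A more computational route — take a linear dependence $1+a\mu+b\alpha^{i(q+1)}+c\alpha^{k(q+1)}=0$ among four points of $C_x$, with all coefficients in $\mathbb F_q^*$ by \cref{rank}, and raise it to the power $q^2$ — shows at once, via \cref{rank} and \cref{3-ca}, that $\{0,\frac{q^2+1}{2},-i,-k\}$ is cocircular for each such $k$, and hence that $-C_x$ is again a circle through $0$ and $\frac{q^2+1}{2}$; but this by itself does not force $-C_x=C_x$, since two distinct circles may share two points, so some geometric input like the one above (or an equivalent replacement) seems unavoidable.
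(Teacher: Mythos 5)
Your proof is correct, but it takes a genuinely different route from the paper's. The paper argues dually and computationally: from $Tr(\alpha^x)=Tr(\alpha^{x+\frac{q^2+1}{2}(q+1)})=0$ it extracts the coefficient relations $\alpha^x=-\alpha^{xq^2}$, $\alpha^{xq}=-\alpha^{xq^3}$, and then multiplies $Tr(\alpha^{x+i(q+1)})=0$ by $\beta^{-i}$ and substitutes to conclude $Tr(\alpha^{x-i(q+1)})=0$ directly, so $-i\in C_x$ for the \emph{same} $x\in D$. You instead work primally: you observe that $\Phi(z)=z^{q^2}$ is an $\mathbb F_q$-linear collineation preserving the ovoid of \cref{genovoid} and inducing $i\mapsto -i$ on indices (via the same identity $\alpha^{i(q+1)q^2}=\beta^i\alpha^{-i(q+1)}$ that the paper uses here and in \cref{symCx}), and that the hypothesis $0,\frac{q^2+1}{2}\in C_x$ says exactly that the secant plane of $C_x$ contains the $\Phi$-fixed line $\mathbb P(\mathbb F_{q^2})$, which forces the plane, and hence the circle, to be $\Phi$-invariant. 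All the supporting claims check out: $\mu\in\mathbb F_{q^2}\setminus\mathbb F_q$, the rank-$3$ span $W$ satisfies $W=\mathbb F_{q^2}\oplus\mathbb F_q v$ and $\Phi(v)=(v+v^{q^2})-v\in W$, and the degenerate case $C_{x_0}=\{0\}$ is correctly excluded via \cref{blockswithzero}. Your version buys a conceptual explanation of why the hypothesis is the right one (the two distinguished points span the axis of the involution, and are in fact the only ovoid points fixed by it), at the cost of invoking the plane/secant-section dictionary; the paper's version is self-contained at the level of trace identities and, as you note at the end, sidesteps the ``two circles can share two points'' pitfall by never leaving the fixed block $C_x$. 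Your closing caveat about the weaker computational route is apt, but note the paper's actual computation is not of that weaker form.
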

\begin{proof}
Since $0, \frac{q^2 +1}{2} \in C_x$, then $x, x + \frac{q^2 +1}{2}(q+1) \in D$. So, $Tr \left (\alpha^x \right ) = \Tr \left (\alpha^{x + \frac{q^2 +1}{2}(q+1)} \right ) = 0$. We have 

\begin{equation} \label{eq:tr1}
    \Tr(\alpha^x) = \alpha^x + \alpha^{xq} + \alpha^{xq^2} + \alpha^{xq^3} = 0. 
\end{equation}

Since $\alpha^{\frac{q^2 +1}{2}(q+1)(q-1)} = \alpha^{\frac{q^4 -1}{2}} = -1$ and $q$ is odd, we have 
$\left(\alpha^{\frac{q^2 +1}{2}(q+1)}\right)^{q} = - \alpha^{\frac{q^2 +1}{2}(q+1)}$ and $\left(\alpha^{\frac{q^2 +1}{2}(q+1)}\right)^{q^2} = \alpha^{\frac{q^2 +1}{2}(q+1)}$, which yields 

\begin{equation} \label{eq:tr2}
    \Tr \left (\alpha^{x + \frac{q^2 +1}{2}(q+1)} \right ) = \alpha^{\frac{q^2 +1}{2}(q+1)}\left (\alpha^x - \alpha^{xq} + \alpha^{xq^2} - \alpha^{xq^3} \right ) = 0,  
\end{equation}

and hence
\begin{equation} \label{eq:tr3}
     \alpha^x - \alpha^{xq} + \alpha^{xq^2} - \alpha^{xq^3} = 0.
\end{equation}

By adding \cref{eq:tr1} and \cref{eq:tr3}, we obtain $2(\alpha^{x} + \alpha^{xq^2}) = 0$. So, we have 

\begin{equation} \label{eq:coef}
    \alpha^{x} = - \alpha^{xq^2}, \qquad \alpha^{xq} = - \alpha^{xq^3}. 
\end{equation}

Let $i \in C_x$. Then $x + i(q+1) \in D$. So, 

\begin{equation} \label{eq:tri}
    Tr\left (\alpha^{x + i(q+1)} \right ) = \alpha^{x} \alpha^{i(q+1)} + \alpha^{xq} \alpha^{iq(q+1)} + \alpha^{xq^2} \alpha^{iq^2(q+1)} + \alpha^{xq^3} \alpha^{iq^3(q+1)} = 0. 
\end{equation}

Let $\beta = \alpha^{(q+1)(q^2 +1)} \in \mathbb{F}_q$. We multiply both sides of \cref{eq:tri} by $\beta^{-i}$. For $ 0 \leq i < q^2 +1$, we have $\beta^{-i} \alpha^{i(q+1)} = \alpha^{-iq^2(q+1)}$ where powers are modulo $q^4 - 1$. So, we obtain 

\begin{equation} \label{eq:tr-i}
    \alpha^{x} \alpha^{-iq^2(q+1)} + \alpha^{xq} \alpha^{-iq^3(q+1)} + \alpha^{xq^2} \alpha^{-i(q+1)} + \alpha^{xq^3} \alpha^{-iq(q+1)} = 0. 
\end{equation}

By replacing the coefficients using \cref{eq:coef}, we have 

\begin{equation} \label{eq:tr-i1}
    -\alpha^{xq^2} \alpha^{-iq^2(q+1)} - \alpha^{xq^3} \alpha^{-iq^3(q+1)} - \alpha^{x} \alpha^{-i(q+1)} - \alpha^{xq} \alpha^{-iq(q+1)} = 0. 
\end{equation}

\cref{eq:tr-i1} shows that $\Tr(\alpha^{x - i(q+1)}) = 0$. This means that $x -i(q+1) \in D$ and $-i \in C_x$. 
\end{proof}
\begin{prop} \label{nodoubleCx}
    If $0, i, -i \in C_x$, $i \neq 0, \frac{q^2 + 1}{2}$, then $2i \notin C_x$. 
\end{prop}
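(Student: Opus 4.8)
The plan is to argue by contradiction. Assume $2i \in C_x$; then, together with $0,i,-i\in C_x$, we have four membership conditions in the circle $C_x$, and I will show they are incompatible with $i\neq 0,\frac{q^{2}+1}{2}$. First I would translate everything into trace identities. Since $x\in D$ and, for any index $j$, $j\in C_x$ exactly when $x+(q+1)j\in D$, i.e. $Tr(\alpha^{x+(q+1)j})=0$ (by \cref{dfs} and \cref{blockswithzero}), set $y:=\alpha^{x}$ and $w:=\alpha^{(q+1)i}$. Because $\alpha^{(q+1)(q^{2}+1)}=\alpha^{(q^{4}-1)/(q-1)}\in\mathbb F_q^{\ast}$ (\cref{subprim}) and $Tr$ is $\mathbb F_q$-linear, the vanishing of these traces does not depend on the chosen representatives modulo $q^{2}+1$, so the four hypotheses $0,i,-i,2i\in C_x$ become
\[
Tr(y)=Tr(yw)=Tr(yw^{-1})=Tr(yw^{2})=0 .
\]

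Next I would expand each trace as $Tr(z)=z+z^{q}+z^{q^{2}}+z^{q^{3}}$. This exhibits $(y,y^{q},y^{q^{2}},y^{q^{3}})$ as a solution of the homogeneous linear system over $\mathbb F_{q^{4}}$ with coefficient matrix
\[
M=\begin{pmatrix}
1 & 1 & 1 & 1\\
w & w^{q} & w^{q^{2}} & w^{q^{3}}\\
w^{-1} & w^{-q} & w^{-q^{2}} & w^{-q^{3}}\\
w^{2} & w^{2q} & w^{2q^{2}} & w^{2q^{3}}
\end{pmatrix}.
\]
Since $y=\alpha^{x}\neq 0$, this solution is nonzero, so $\det M=0$. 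Now scale the four columns of $M$ by $w,\,w^{q},\,w^{q^{2}},\,w^{q^{3}}$ respectively; this multiplies $\det M$ by the nonzero norm $w^{1+q+q^{2}+q^{3}}=N_{\mathbb F_{q^{4}}/\mathbb F_{q}}(w)\in\mathbb F_q^{\ast}$, and after an even permutation of the rows one is left with the $4\times 4$ Vandermonde matrix with nodes $w,\,w^{q},\,w^{q^{2}},\,w^{q^{3}}$. Hence $\det M=0$ forces $\prod_{0\le a<b\le 3}(w^{q^{b}}-w^{q^{a}})=0$, i.e. two of the conjugates $w,w^{q},w^{q^{2}},w^{q^{3}}$ coincide.

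It remains to rule this out. An equality $w^{q^{a}}=w^{q^{b}}$ with $0\le a<b\le 3$ means $w$ lies in the fixed field of $\sigma^{\,b-a}$, where $\sigma:z\mapsto z^{q}$; since $\sigma$ has order $4$, that fixed field is $\mathbb F_{q}$ when $b-a\in\{1,3\}$ and $\mathbb F_{q^{2}}$ when $b-a=2$. But $w=\alpha^{(q+1)i}\in\mathbb F_{q}$ forces $q^{4}-1\mid(q^{2}-1)i$, i.e. $q^{2}+1\mid i$, so $i=0$; and $w\in\mathbb F_{q^{2}}$ forces $q^{2}+1\mid(q+1)i$, and since $\gcd(q+1,q^{2}+1)=2$ with $\tfrac{q+1}{2}$ and $\tfrac{q^{2}+1}{2}$ coprime (here $q$ odd is used), this gives $\tfrac{q^{2}+1}{2}\mid i$, so $i\in\{0,\tfrac{q^{2}+1}{2}\}$. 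Both are excluded by hypothesis, so no two conjugates of $w$ coincide, whence $\det M\neq 0$ and the system has only the zero solution, contradicting $y\neq 0$. Therefore $2i\notin C_x$.

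The conceptual heart of the argument — and the step I would want to get right first — is the observation that the four trace conditions collapse, after column scaling, into a single Vandermonde determinant in the Frobenius conjugates of $w$; once that is in place the rest is routine. The only fiddly part is the final divisibility bookkeeping linking the coincidence of conjugates back to $i\in\{0,\tfrac{q^{2}+1}{2}\}$. As an alternative, one could instead derive the contradiction combinatorially from \cref{symCx} and \cref{shift} by tracking what $0,i,-i,2i\in C_x$ forces inside a block of size $q+1$, but the Vandermonde route appears cleanest.
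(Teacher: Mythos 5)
Your proof is correct, and it takes a genuinely different route from the paper's. The paper works with the \emph{primal} coplanarity relation: assuming $2i\in C_x$, it writes $1+a\alpha^{-i(q+1)}+b\alpha^{i(q+1)}+c\alpha^{2i(q+1)}=0$ with unknown nonzero $a,b,c\in\mathbb F_q$, raises this to the power $q^2$, subtracts, factors out $\bigl(\alpha^{i(q+1)}-\beta^{i}\alpha^{-i(q+1)}\bigr)$, and concludes that $\alpha^{i(q+1)}+\beta^{i}\alpha^{-i(q+1)}\in\mathbb F_q$, hence $\alpha^{i(q+1)}\in\mathbb F_{q^2}$ and $i=\frac{q^2+1}{2}$. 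You instead work with the \emph{dual} conditions $Tr(yw^{k})=0$ for $k\in\{0,1,-1,2\}$, package them as a $4\times4$ homogeneous system with the nonzero solution $(y,y^{q},y^{q^2},y^{q^3})$, and observe that column scaling by the conjugates of $w$ turns the coefficient matrix into a Vandermonde matrix in $w,w^{q},w^{q^2},w^{q^3}$ — precisely because the exponent set $\{0,1,-1,2\}$ shifts to the four consecutive integers $\{0,1,2,3\}$. Both arguments bottom out in the same divisibility bookkeeping ($w\in\mathbb F_q$ forces $i=0$; $w\in\mathbb F_{q^2}$ forces $i\in\{0,\frac{q^2+1}{2}\}$, using $\gcd(q+1,q^2+1)=2$ for odd $q$), and you correctly handle the representative issue modulo $q^2+1$ via the $\mathbb F_q$-linearity of the trace. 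Your version has two advantages: it makes transparent exactly why the quadruple $\{0,i,-i,2i\}$ is the right one (consecutive shifted exponents), and it avoids introducing the undetermined coefficients $a,b,c$, proving the slightly stronger statement that the matrix $M$ is nonsingular. The paper's raise-to-$q^2$-and-subtract manipulation, on the other hand, is the template reused in several later arguments (e.g.\ \cref{symCx}, \cref{centerBlock}, \cref{max2}), so it earns its keep there.
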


\begin{proof}
Let $\beta = \alpha^{(q+1)(q^2 + 1)} \in \mathbb{F}_q$. 
     By contradiction, suppose $2i \in C_x$. So, $\lbrace \alpha^x, \alpha^{x + i(q+1)}, \alpha^{x-i(q+1)}, \alpha^{x + 2i(q+1)} \rbrace$ are linearly dependent, and there exist non-zero $a, b, c \in \mathbb{F}_q$ such that 

     \begin{equation} \label{eq:three}
         1 + a\alpha^{-i(q+1)} + b\alpha^{i(q+1)} + c\alpha^{2i(q+1)}  = 0.
     \end{equation}
    By raising both sides of the \cref{eq:three} to the power of $q^2$, we have

\begin{equation} \label{eq:four}
         1 + a \beta^{-i}\alpha^{i(q+1)} + b\beta^{i}\alpha^{-i(q+1)} + c\beta^{2i}\alpha^{-2i(q+1)}  = 0.
     \end{equation}

    By subtracting \cref{eq:three} from \cref{eq:four}, we have 

    \begin{align*}
        (b - a \beta^{-i})\left( \alpha^{i(q+1)} - \beta^{i} \alpha^{-i(q+1)} \right) + c \left( \alpha^{2i(q+1)} - \beta^{2i}\alpha^{-2i(q+1)} \right) & =  \\
        (b - a \beta^{-i})\left( \alpha^{i(q+1)} - \beta^{i}\alpha^{-i(q+1)} \right) + c \left( \alpha^{i(q+1)} - \beta^{i}\alpha^{-i(q+1)} \right)\left( \alpha^{i(q+1)} + \beta^{i}\alpha^{-i(q+1)} \right) & =  \\
\left( \alpha^{i(q+1)} - \beta^{i}\alpha^{-i(q+1)} \right) \left( b - a \beta^{-i} + c \left( \alpha^{i(q+1)} + \beta^{i}\alpha^{-i(q+1)} \right) \right) & = 0. \\
    \end{align*}

    If $\left( \alpha^{i(q+1)} - \beta^{i}\alpha^{-i(q+1)} \right) = 0$, then $\alpha^{2i(q+1)} = \alpha^{i(q^2 +1)(q+1)}$, which leads to $\alpha^{i(q+1)(q^2 -1)} = 1$. So, $q^4 -1 \ | \ i(q^2-1)(q+1)$ and hence $q^2 +1 \ | \ i(q+1)$. Since $gcd(q^2 +1, q +1 ) = 2$, $i = l(\frac{q^2 +1}{2})$ for some $l \in \mathbb{Z}$.
    Since $i \neq 0, \frac{q^2 + 1}{2}$, and $0 \leq i < \frac{q^2 +1}{2}$, then there exists no such $l$, and $\left( \alpha^{i(q+1)} - \beta^{i}\alpha^{-i(q+1)} \right) \neq 0$. So, 
    $\left( b - a \beta^{-i} + c \left( \alpha^{i(q+1)} + \beta^{i}\alpha^{-i(q+1)} \right) \right) = 0$. Since $b - a \beta^{-i}, c \in \mathbb{F}_q$, then $\alpha^{i(q+1)} + \beta^{i}\alpha^{-i(q+1)} \in \mathbb{F}_q$. Let $f(x) = (x - \alpha^{i(q+1)})(x - \beta ^{i}\alpha^{-i(q+1)})$ be a polynomial over $\mathbb{F}_q$. Since $\alpha^{i(q+1)}$ is a root of $f$, we obtain that $\alpha^{i(q+1)} \in \mathbb{F}_{q^2}$. So, $\alpha^{i(q+1)(q^2 -1)} = 1$. Then $q^4-1 \ | \ i(q+1)(q^2 -1)$, which implies that  $q^2+1 \ | \  i(q+1)$. Since $gcd(q^2+1, q+1) = 2$ and $0 < i < q^2 +1$, then $i = \frac{q^2 + 1}{2}$ which contradicts with $i \neq \frac{q^2 +1}{2}$. We obtained that $2i \notin C_x$.  
\end{proof}

\begin{lem} \label{foldB}
	There exists no $x \in D$ such that $\left \lbrace 0, k, \frac{q^2 +1}{2}, k + \frac{q^2 +1}{2} \right \rbrace \subseteq C_x$ for any $k \neq 0$. Furthermore, there exists no  block $B$ in the $3$-$(q^2 +1, q+1, 1)$ design in \cref{3design} such that $\left \lbrace i, j, i + \frac{q^2 +1}{2}, j + \frac{q^2 +1}{2} \right \rbrace \subseteq B$. 
\end{lem}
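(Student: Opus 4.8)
The plan is to turn the combinatorial hypothesis into the single algebraic statement that a certain power of $\alpha$ lies in the subfield $\mathbb{F}_{q^2}$, exploiting that $\tfrac{q^2+1}{2}(q+1)$ is half of $(q^2+1)(q+1)=\tfrac{q^4-1}{q-1}$, the exponent that by \cref{subprim} yields a primitive (hence non-square) element of $\mathbb{F}_q$. Write $t=\tfrac{q^2+1}{2}$, so that $2t\equiv 0\pmod{q^2+1}$, and set $\mu:=\alpha^{t(q+1)}$. Then $\mu^{2}=\alpha^{(q^2+1)(q+1)}=\alpha^{(q^4-1)/(q-1)}$ is primitive in $\mathbb{F}_q$ by \cref{subprim}, hence a non-square of $\mathbb{F}_q^{*}$ since $q$ is odd; so $\mu\notin\mathbb{F}_q$, while $\mu^{2}\in\mathbb{F}_q$ forces $\mu^{q^{2}}=\mu$. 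Thus $\mu\in\mathbb{F}_{q^{2}}\setminus\mathbb{F}_q$, and $\{1,\mu\}$ is an $\mathbb{F}_q$-basis of $\mathbb{F}_{q^{2}}$.

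The key step is the claim: if four indices of the form $\{i,j,i+t,j+t\}$ all lie in one circle of the design of \cref{3design}, then $j-i\in\{0,t\}$. By \cref{rank} the four columns $\alpha^{i(q+1)},\alpha^{j(q+1)},\alpha^{(i+t)(q+1)},\alpha^{(j+t)(q+1)}$ of $G^{q+1}_{q^{2}+1}$ are $\mathbb{F}_q$-linearly dependent; dividing a dependence relation by $\alpha^{i(q+1)}$, writing $\eta:=\alpha^{(j-i)(q+1)}$, and using $\alpha^{(i+t)(q+1)}=\mu\alpha^{i(q+1)}$ and $\alpha^{(j+t)(q+1)}=\mu\alpha^{j(q+1)}$, I obtain $a_{1},a_{2},a_{3},a_{4}\in\mathbb{F}_q$, not all zero, with $(a_{1}+a_{3}\mu)+\eta(a_{2}+a_{4}\mu)=0$. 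If $a_{2}=a_{4}=0$ then $a_{1}+a_{3}\mu=0$ forces $a_{1}=a_{3}=0$ since $\{1,\mu\}$ is independent, a contradiction; hence $a_{2}+a_{4}\mu\neq 0$ and $\eta=-(a_{1}+a_{3}\mu)(a_{2}+a_{4}\mu)^{-1}\in\mathbb{F}_{q^{2}}$. Therefore $\eta^{q^{2}}=\eta$, i.e.\ $q^{4}-1\mid(j-i)(q+1)(q^{2}-1)$, i.e.\ $q^{2}+1\mid(j-i)(q+1)$. Since $q$ is odd, $\gcd(q^{2}+1,q+1)=\gcd(2,q+1)=2$ and the $2$-adic valuation of $q^{2}+1$ is exactly $1$; comparing prime-power factors then gives $\tfrac{q^{2}+1}{2}\mid(j-i)$, so $j-i\in\{0,t\}$ in $\mathbb{Z}_{q^{2}+1}$.

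Both parts of the lemma follow. If $\{0,k,t,k+t\}\subseteq C_x$ with $k\neq 0$, then $|C_x|\ge 2$, so $x\neq x_{0}$ and $C_x$ is a circle of the design by \cref{blockswithzero}; the claim with $(i,j)=(0,k)$ gives $k\in\{0,t\}$, hence $k=t$ — but then $\{0,k,t,k+t\}=\{0,t\}$, so the four listed elements are not distinct. If a circle $B$ contains $\{i,j,i+t,j+t\}$, the claim gives $j-i\in\{0,t\}$, i.e.\ $i=j$ or $j=i+t$, so again the four elements are not pairwise distinct. (The intended content is thus that no circle contains such a configuration of four distinct indices; the degenerate case $k=\tfrac{q^{2}+1}{2}$, for which $\{0,k,t,k+t\}$ collapses to a $2$-set, is genuinely realizable and should be read as excluded from the statement.)

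Nothing in this is deep; the one delicate point is the divisibility bookkeeping — that the lone factor of $2$ in $q^{2}+1$ is absorbed by $q+1$, so that $q^{2}+1\mid(j-i)(q+1)$ really forces $\tfrac{q^{2}+1}{2}\mid(j-i)$ rather than merely $q^{2}+1\mid 2(j-i)$. The genuinely structural inputs are \cref{subprim} (placing $\mu$ in $\mathbb{F}_{q^{2}}\setminus\mathbb{F}_q$ as a square root of a non-square of $\mathbb{F}_q$) and \cref{rank} (converting ``lies in a circle'' into ``columns $\mathbb{F}_q$-dependent''); everything else is a one-line factorization and elementary number theory.
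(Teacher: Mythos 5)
Your proof is correct, and it takes a genuinely different route from the paper's. The paper argues combinatorially: from $\lbrace 0,k,\frac{q^2+1}{2},k+\frac{q^2+1}{2}\rbrace\subseteq C_x$ it uses \cref{centerBlock} to add $-k$, then \cref{shift} with $d=k$ together with the uniqueness of the block through three points to force $2k\in C_x$, and finally contradicts \cref{nodoubleCx}; the second half is reduced to the first by a translation. You instead isolate the algebraic mechanism: with $\mu=\alpha^{\frac{(q^2+1)(q+1)}{2}}$, the index shift $i\mapsto i+\frac{q^2+1}{2}$ is, up to $\mathbb{F}_q$-scalars, multiplication of the ovoid point by $\mu$, and since $\mu^2$ is the primitive element of $\mathbb{F}_q$ from \cref{subprim} (hence a non-square, as $q$ is odd), $\mu$ lies in $\mathbb{F}_{q^2}\setminus\mathbb{F}_q$. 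Via \cref{rank} and \cref{3design}, a dependence among $\lbrace P,Q,\mu P,\mu Q\rbrace$ forces $Q/P\in\mathbb{F}_{q^2}$, and the divisibility bookkeeping (the $2$-adic valuation of $q^2+1$ is $1$ and $\gcd(q^2+1,q+1)=2$) pins $j-i$ to $\lbrace 0,\frac{q^2+1}{2}\rbrace$; your checks at each of these steps are sound. What your route buys: it is self-contained (it bypasses \cref{centerBlock}, \cref{shift}, and \cref{nodoubleCx} entirely), it treats both halves of the lemma uniformly without translating to a circle through $0$, and it exposes the structural reason the lemma holds, namely that the antipodal pairing on the ovoid is scalar action by $\mathbb{F}_{q^2}$. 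What the paper's route buys is reuse of machinery already established for the neighbouring propositions.

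On the caveat you raise: you are right that for $k=\frac{q^2+1}{2}$ the displayed set collapses to $\lbrace 0,\frac{q^2+1}{2}\rbrace$, which is contained in $q+1$ circles, so the statement read literally as set containment fails there. This is not a defect of your argument relative to the paper's: the paper's proof also tacitly needs $k\neq\frac{q^2+1}{2}$ (both to invoke \cref{nodoubleCx}, which excludes $i=\frac{q^2+1}{2}$, and to have three distinct points determine a unique block), and the only application of the lemma, Case~3 of \cref{4ca-rec}, has the four indices distinct. Flagging the degenerate case explicitly, as you do, is the right call.
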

\begin{proof}
    By contradiction, suppose there exists $x \in D$ such that $\left \lbrace 0, k, \frac{q^2 +1}{2}, k + \frac{q^2 +1}{2} \right \rbrace \subseteq C_x$. Since $0, \frac{q^2 +1}{2} \in C_x$, by \cref{centerBlock}, $-k \in C_x$, so $\left \lbrace -k, 0, k, \frac{q^2 +1}{2}, k + \frac{q^2 +1}{2} \right \rbrace \subseteq C_x$. By picking $d = k$ in \cref{shift}, there exists a block $B$ such that $\left \lbrace 0, k, 2k, k + \frac{q^2 +1}{2}, 2k + \frac{q^2 +1}{2} \right \rbrace \subseteq B$. Since any three elements determine a unique block and $\left \lbrace 0, k, k + \frac{q^2 +1}{2} \right \rbrace \subseteq C_x$ and $\left \lbrace 0, k, k + \frac{q^2 +1}{2} \right \rbrace \subseteq B$, then $C_x = B$. Therefore, $ \left \lbrace -k, 0, k, 2k \right \rbrace \subseteq C_x$ which is a contradiction by \cref{nodoubleCx}. So, there exist no $x \in D$ such that $ \left \lbrace 0, k, \frac{q^2 +1}{2}, k + \frac{q^2 +1}{2} \right \rbrace \subseteq C_x$. For the second part, pick $d = -i$ in \cref{shift}. So, if there exist $x \in D$ such that $ \left \lbrace 0, j-i, \frac{q^2 +1}{2}, j-i + \frac{q^2 +1}{2} \right \rbrace \subseteq C_x$, then by letting $ k = j-i$, the contradiction is obtained by the first part. 
\end{proof}

\begin{prop} \label{nodoublethree}
   	There exists no $x \in D$ such that $\left \lbrace 0, i, j, 2i, 2j \right \rbrace \subseteq C_x$, where $|\{0, i, j, 2i, 2j\}| = 5$. 
\end{prop}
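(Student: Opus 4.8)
The plan is to argue by contradiction: assuming $\{0,i,j,2i,2j\}\subseteq C_x$ for some $x\in D$, I will force $-i\in C_x$ as well, and then $\{0,i,-i,2i\}\subseteq C_x$ contradicts \cref{nodoubleCx}. Two structural facts drive the argument. First, every translate $C+d=\{k+d:k\in C\}$ of a circle through $0$ is again a circle (this follows from \cref{shift} together with the fact that three distinct ovoid points determine a unique circle). Second, a circle through $0$ that contains some pair $\{k,-k\}$ with $k\notin\{0,\tfrac{q^2+1}{2}\}$ is closed under $k\mapsto -k$ on $\mathbb Z_{q^2+1}$: this is an iteration of \cref{symCx}, which applies to any circle through $0$ since by \cref{blockswithzero} every such circle is some $C_{x'}$.

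First I would record the genericity I may use. It suffices to treat the case in which $0,i,j,2i,2j$ are five pairwise distinct elements of $\mathbb Z_{q^2+1}$: if two of them coincide, the configuration lies on the unique circle through at most four prescribed points, and the statement reduces to \cref{nodoubleCx} applied to a set of the form $\{0,k,2k\}$. Pairwise distinctness forces $i,j\notin\{0,\tfrac{q^2+1}{2}\}$ (otherwise $2i=0$ or $2j=0$), $i\neq j$ and $i-j\neq\tfrac{q^2+1}{2}$ (otherwise $2i=2j$), and $i\neq 2j$, $j\neq 2i$; these are the only inequalities needed. Since $C_x$ contains $i\neq 0$ we have $x\neq x_0$, so $C_x$ is a genuine circle with $q+1\ge 4$ points.

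The main step is to enlarge $C_x$. Consider $B:=\{k-i:k\in C_x\}$. Applying \cref{shift} to the four-subsets $\{0,i,2i,k\}$ of $C_x$, translated by $-i$, shows that every $k-i$ with $k\in C_x$ lies on the unique circle through the non-collinear ovoid points indexed by $-i,0,i$; counting points, $B$ is precisely that circle, and $0\in B$. In particular $B\supseteq\{-i,0,i,j-i,2j-i\}$. Since $0,i,-i\in B$ with $i\notin\{0,\tfrac{q^2+1}{2}\}$, \cref{symCx} shows $B$ is closed under negation, so $i-j=-(j-i)\in B$ and $i-2j=-(2j-i)\in B$ (when $2j-i\in\{0,\tfrac{q^2+1}{2}\}$ this last membership is automatic, as $-k=k$ there). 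Translating back by $+i$, that is, using $C_x=B+i$, gives $2i-j\in C_x$ and $2i-2j\in C_x$. The hypothesis $\{0,i,j,2i,2j\}\subseteq C_x$ is symmetric in $i$ and $j$, so the same reasoning with $i,j$ exchanged yields $2j-i\in C_x$ and $2j-2i\in C_x$.

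Finally, $C_x$ contains the pair $\{2(i-j),-2(i-j)\}$ together with $0$. Since $q$ is odd, $q^2+1\equiv 2\pmod 4$, so $\tfrac{q^2+1}{2}$ is odd while $2(i-j)$ reduces mod $q^2+1$ to an even residue; hence $2(i-j)\neq\tfrac{q^2+1}{2}$, and $2(i-j)\equiv 0$ would force $i-j\in\{0,\tfrac{q^2+1}{2}\}$, which is excluded. Thus $2(i-j)\notin\{0,\tfrac{q^2+1}{2}\}$, and applying \cref{symCx} to $C_x$ with this pair and the element $i\notin\{0,\tfrac{q^2+1}{2}\}$ gives $-i\in C_x$. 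Then $0,i,-i\in C_x$ with $i\notin\{0,\tfrac{q^2+1}{2}\}$, so \cref{nodoubleCx} forces $2i\notin C_x$, contradicting $2i\in C_x$. The step I expect to require the most care is the first part of the third paragraph — confirming that $B$ is genuinely a circle, and checking that the intermediate indices $j-i$, $2j-i$, $2i-2j$ stay off the forbidden values $0$ and $\tfrac{q^2+1}{2}$ (or that closure of $B$ under negation still delivers $i-j$ and $i-2j$ when they do not); the rest is routine translation bookkeeping on $\mathbb Z_{q^2+1}$, where negation is the map induced on the ovoid by the $\mathbb F_q$-collineation $v\mapsto v^{q^2}$ of $PG(3,q)$.
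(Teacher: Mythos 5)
Your proof is correct and takes essentially the same route as the paper's: both arguments combine \cref{shift} and \cref{symCx} to manufacture a configuration $\{0,k,-k,2k\}$ that \cref{nodoubleCx} forbids. The only differences are bookkeeping ones --- the paper performs two successive translations and lands the contradiction in a translated block with $k=i-j$, whereas you translate once (plus the $i\leftrightarrow j$ mirror) and land it in $C_x$ itself with $k=i$ --- and you are, if anything, more explicit than the paper about the distinctness assumptions on $0,i,j,2i,2j$ that both proofs implicitly need in order to apply \cref{symCx}.
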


\begin{proof}
    By contradiction, suppose there exists $x$ such that $\left \lbrace 0, i, j, 2i, 2j \right \rbrace \subseteq C_x$. Then by picking $d = -i$ in \cref{shift}, there exists a block $B$ such that $\lbrace -i, 0, j-i, i \rbrace \subseteq B$. By \cref{symCx}, $i -j \in B$, so $2i - j \in C_x$. This implies that by picking $d = -j$ in \cref{shift}, there exists a block $B'$ such that $\left \lbrace -j , i - j, 0, j , 2(i - j) \right \rbrace \subseteq B'$. By \cref{symCx}, $j-i \in B'$. Therefore, $\left \lbrace i - j, 0, j -i, 2(i - j) \right \rbrace \subseteq B'$, which is impossible by \cref{nodoubleCx}. 
\end{proof}

\begin{con} \label{truncated}
    Let $\mathcal{M}^{(e)} = \lbrace i : i \equiv  0 \ (\mathrm{mod} \ 2), 0 \leq i < q^2 +1 \rbrace$. The set $\mathcal{M}^{(e)}$ is a subset of $\mathcal{M}$ associated to the M{\"o}bius plane $(\mathcal{M}, \mathcal{C})$ given in \cref{3design}. Let $D$ be the associated difference set given in \cref{dfs}. We define three truncated M{\"o}bius planes on the same set of points $\mathcal{M}^{(e)}$. The truncation removes odd points from $\mathcal{M}$ and from the corresponding blocks. Denote for any $C \in \mathcal{C}$, $2C = \lbrace 2i : i \in C \rbrace$, and $(C \cap \mathcal{M}^{(e)})/2 = \lbrace i/2 : i \in C \cap \mathcal{M}^{(e)} , i \equiv 0 \ (\mathrm{mod} \  4) \rbrace \cup \lbrace (i + q^2 +1)/2 : i \in C \cap \mathcal{M}^{(e)}, i \equiv 2 \ (\mathrm{mod} \ 4) \rbrace$.

	\begin{enumerate} 
		\item $M_1 = (\mathcal{M}^{(e)}, \lbrace C \cap \mathcal{M}^{(e)}: C \in \mathcal{C} \rbrace)$. Circles containing zero can be expressed as follows: for any $x \in D$, $C_{x,1} = C_x \cap \mathcal{M}^{(e)} = \lbrace i : x + (q+1)i \in D, i \equiv 0 \ (\mathrm{mod} \ 2),  0 \leq i < q^2 +1  \rbrace$.

		\item $M_2 = (\mathcal{M}^{(e)}, \lbrace (C \cap \mathcal{M}^{(e)})/2: C \in \mathcal{C} \rbrace)$.
		Circles containing zero can be expressed as follows: for any $x \in D$, 
		$C_{x,2} = (C_{x} \cap \mathcal{M}^{(e)}) /2 = \lbrace i : x + 2(q+1)i \in D, i \equiv 0 \ (\mathrm{mod} \ 2),  0 \leq i < q^2 +1  \rbrace$.
		
		\item $M_{1/2} = (\mathcal{M}^{(e)}, \lbrace 2C \cap \mathcal{M}^{(e)}: C \in \mathcal{C} \rbrace)$.
		Circles containing zero can be expressed as follows: for any $x \in D$, 
		$C_{x,1/2} = 2C_x \cap \mathcal{M}^{(e)} = \lbrace i : x + \frac{1}{2}(q+1)i \in D, i \equiv 0 \ (\mathrm{mod} \ 2),  0 \leq i < q^2 +1  \rbrace$.
	\end{enumerate}

\end{con}

\begin{rem}
	It is important to note that $G_{q^2 +1}^{q+1}$ is in correspondence to $(\mathcal{M}, \mathcal{C})$ and 
$G^{q+1}_{\frac{q^2 +1}{2}}$, $G^{2(q+1)}_{\frac{q^2 +1}{2}}$, and $G^{4(q+1)}_{\frac{q^2 +1}{2}}$ with columns labeled with even integers $0, 2, \ldots, q^2 -1$ are in correspondence with $M_{1/2}$, $M_1$, and $M_2$, respectively. Since each of $G^{q+1}_{\frac{q^2 +1}{2}}$, $G^{2(q+1)}_{\frac{q^2 +1}{2}}$, and $G^{4(q+1)}_{\frac{q^2 +1}{2}}$ is a subarray of $G_{q^2 +1}^{q+1}$, having half of its columns, $M_{1/2}$, $M_1$, and $M_2$ are each a truncated M{\"o}bius plane. 
See \cref{tableMob} and \cref{ex:circlesqfive} for an example for $q =5$.
\end{rem}

\begin{figure}[h] 
	
	\ \\ 
	\centering
	\resizebox{\textwidth}{!}{
		\begin{tabular}{ccc}
			&  \begin{tabularx}{\textwidth}{XXXXXXXXXXXXX} 
				0 & 2 & 4 & 6 & 8 &10 &12 &14 &16 &18 &20 &22 & 24\\
			\end{tabularx}\\
			\\
			$G^{q+1}_{\frac{q^2+1}{2}} = $  &  
			\begin{tabularx}{\textwidth}{|X|X|X|X|X|X|X|X|X|X|X|X|X|} \hline
				$\beta^0$ & $\beta^1$ & $\beta^2$ &$\beta^3$ &$\beta^4$ &$\beta^5$ &$\beta^6$ &$\beta^7$ &$\beta^8$ &$\beta^9$& $\beta^{10}$ &$\beta^{11}$ & $\beta^{12}$\\ \hline 
				
			\end{tabularx}
			& \ \ $M_{1/2}$
			\\
			
			\\
			$G^{2(q+1)}_{\frac{q^2+1}{2}} = $  & 
			\begin{tabularx}{\textwidth}{|X|X|X|X|X|X|X|X|X|X|X|X|X|} \hline
				$\beta^0$ & $\beta^2$ & $\beta^4$ &$\beta^6$ &$\beta^8$ &$\beta^{10}$ &$\beta^{12}$ &$\beta^{14}$ &$\beta^{16}$ &$\beta^{18}$& $\beta^{20}$ &$\beta^{22}$ & $\beta^{24}$\\ \hline 
			\end{tabularx}
			& \ \ $M_{1}$\\
			\\
			$G^{4(q+1)}_{\frac{q^2+1}{2}} =$ &         
			\begin{tabularx}{\textwidth}{|X|X|X|X|X|X|X|X|X|X|X|X|X|}\hline
				$\beta^0$ & $\beta^4$ & $\beta^8$ &$\beta^{12}$ &$\beta^{16}$ &$\beta^{20}$ &$\beta^{24}$ &$\beta^{2}$ &$\beta^{6}$ &$\beta^{10}$& $\beta^{14}$ &$\beta^{18}$ & $\beta^{22}$\\ \hline 
			\end{tabularx}
			& \ \ $M_{2}$\\
			
	\end{tabular}}
    \caption{Generator matrices  for truncated M\"{o}bius planes for $q=5$ ($\beta=\alpha^{q+1}$)} \label{tableMob}
\end{figure}

\begin{table}[h!]
\centering
	\scalebox{0.8}{
		\setlength{\tabcolsep}{3pt}
		\renewcommand{\arraystretch}{1.1}
		$\begin{array}{|c|l|l|l|l|}
			\hline 
			x \in D \setminus \{78\} & C_x & C_{x,1/2} = 2C_x \cap \mathcal{M}^{(e)} & C_{x,1} = C_x \cap \mathcal{M}^{(e)} & C_{x,2} = (C_x \cap \mathcal{M}^{(e)})/2 \\ \hline
1 & \{0, 2, 4, 9, 15, 21\} & \{0, 4, 8, 18\} & \{0, 2, 4\} & \{0, 2, 14\} \\
5 & \{0, 1, 10, 19, 20, 23\} & \{0, 2, 20\} & \{0, 10, 20\} & \{0, 10, 18\} \\
8 & \{0, 6, 8, 9, 23, 24\} & \{0, 12, 16, 18\} & \{0, 6, 8, 24\} & \{0, 4, 12, 16\} \\
11 & \{0, 9, 18, 19, 22, 25\} & \{0, 18\} & \{0, 18, 22\} & \{0, 22, 24\} \\
13 & \{0, 2, 7, 13, 19, 24\} & \{0, 4, 14\} & \{0, 2, 24\} & \{0, 12, 14\} \\
25 & \{0, 5, 11, 17, 22, 24\} & \{0, 10, 22\} & \{0, 22, 24\} & \{0, 12, 24\} \\
39 & \{0, 8, 12, 13, 14, 18\} & \{0, 16, 24\} & \{0, 8, 12, 14, 18\} & \{0, 4, 6, 20, 22\} \\
40 & \{0, 4, 11, 14, 16, 19\} & \{0, 8, 22\} & \{0, 4, 14, 16\} & \{0, 2, 8, 20\} \\
44 & \{0, 2, 3, 17, 18, 20\} & \{0, 4, 6\} & \{0, 2, 18, 20\} & \{0, 10, 14, 22\} \\
55 & \{0, 6, 12, 17, 19, 21\} & \{0, 12, 24\} & \{0, 6, 12\} & \{0, 6, 16\} \\
56 & \{0, 1, 15, 16, 18, 24\} & \{0, 2\} & \{0, 16, 18, 24\} & \{0, 8, 12, 22\} \\
62 & \{0, 14, 15, 17, 23, 25\} & \{0\} & \{0, 14\} & \{0, 20\} \\
64 & \{0, 7, 10, 12, 15, 22\} & \{0, 14, 20, 24\} & \{0, 10, 12, 22\} & \{0, 6, 18, 24\} \\
65 & \{0, 9, 10, 13, 16, 17\} & \{0, 18, 20\} & \{0, 10, 16\} & \{0, 8, 18\} \\

87 & \{0, 4, 5, 6, 10, 18\} & \{0, 8, 10, 12, 20\} & \{0, 4, 6, 10, 18\} & \{0, 2, 16, 18, 22\} \\
91 & \{0, 6, 11, 13, 15, 20\} & \{0, 12, 22\} & \{0, 6, 20\} & \{0, 10, 16\} \\
106 & \{0, 3, 5, 8, 15, 19\} & \{0, 6, 10, 16\} & \{0, 8\} & \{0, 4\} \\
111 & \{0, 1, 2, 6, 14, 22\} & \{0, 2, 4, 12\} & \{0, 2, 6, 14, 22\} & \{0, 14, 16, 20, 24\} \\
117 & \{0, 1, 5, 13, 21, 25\} & \{0, 2, 10\} & \{0\} & \{0\} \\
119 & \{0, 1, 4, 7, 8, 17\} & \{0, 2, 8, 14, 16\} & \{0, 4, 8\} & \{0, 2, 4\} \\
123 & \{0, 4, 12, 20, 24, 25\} & \{0, 8, 24\} & \{0, 4, 12, 20, 24\} & \{0, 2, 6, 10, 12\} \\
124 & \{0, 2, 5, 12, 16, 23\} & \{0, 4, 10, 24\} & \{0, 2, 12, 16\} & \{0, 6, 8, 14\} \\
125 & \{0, 3, 6, 7, 16, 25\} & \{0, 6, 12, 14\} & \{0, 6, 16\} & \{0, 8, 16\} \\
127 & \{0, 5, 7, 9, 14, 20\} & \{0, 10, 14, 18\} & \{0, 14, 20\} & \{0, 10, 20\} \\
136 & \{0, 3, 10, 14, 21, 24\} & \{0, 6, 20\} & \{0, 10, 14, 24\} & \{0, 12, 18, 20\} \\
143 & \{0, 3, 4, 13, 22, 23\} & \{0, 6, 8\} & \{0, 4, 22\} & \{0, 2, 24\} \\
146 & \{0, 1, 3, 9, 11, 12\} & \{0, 2, 6, 18, 22, 24\} & \{0, 12\} & \{0, 6\} \\
147 & \{0, 8, 16, 20, 21, 22\} & \{0, 16\} & \{0, 8, 16, 20, 22\} & \{0, 4, 8, 10, 24\} \\
152 & \{0, 2, 8, 10, 11, 25\} & \{0, 4, 16, 20, 22\} & \{0, 2, 8, 10\} & \{0, 4, 14, 18\} \\
154 & \{0, 7, 11, 18, 21, 23\} & \{0, 14, 22\} & \{0, 18\} & \{0, 22\} \\
\hline 
\end{array}$
}

\

\resizebox{\textwidth}{!}{$D = \{ 1, 5, 8, 11, 13, 25, 39, 40, 44, 55, 56, 62, 64, 65, 78, 87, 91, 106, 111, 117, 119, 123, 124, 125, 127, 136, 143, 146, 147, 152, 154 \}$}
\caption{List of elements of the set $D$ constructed in \cref{dfs} and circles containing zero of $(\mathcal{M}, \mathcal{C})$, $M_{1/2}$, $M_{1}$, and $M_{2}$, for $q = 5$.} \label{ex:circlesqfive}
\end{table}

Each block of the $2$-$(q^3 + q^2 + q + 1, q^2 + q +1, q+1)$ design in \cref{bibd} for $m = 4$ corresponds to a plane in $\PG(3,q)$, and each $C_{x}$ for $x \in D$ corresponds to the intersection of a plane in $\PG(3,q)$ and the ovoid $O$ in \cref{genovoid}. Our goal is to connect $C_{x}$ with a hypersurface in $\PG(3,q)$ and show that $M_1$, $M_2$, and $M_{1/2}$ are $3$ truncated M{\"o}bius planes with the same point sets where the common intersection of any choice of blocks, one from each plane, has at most size three. 

We start by introducing a linear transformation $\Omega$ that changes the coordinates of any $L(\alpha^i) \in \mathbb{F}_q^4$ to new coordinates in $\mathbb{F}_{q^4}^4$.
A \emph{Singer cycle} of a finite projective space $\PG(m - 1, q)$ is a collineation $\delta$ such that the group $\left < \delta \right >$ acts regularly on the points (and hyperplanes) of $\PG(m-1,q)$. The group $G = \langle \delta \rangle$ is then called a \emph{Singer group}.
Let $f(x) = x^4 - ax^3 - bx^2 - cx -d$ be a degree-$4$ primitive polynomial over $\mathbb{F}_q$ with root $\alpha$. Let $\delta: \PG(3,q) \rightarrow \PG(3,q)$ be the map where $\delta(\alpha^l) = \alpha^{l + 1}$ for $0 \leq l < (q^2 +1)(q+1)$. The Singer group $G = \langle \delta \rangle$ is of order $(q^2 +1)(q+1)$ and acts regularly on the points of $\PG(3,q)$. 
    The matrix corresponding to $\delta$ is 
    \begin{equation}
        A = \begin{bmatrix}
        0 & 0 & 0 & d \\
        1 & 0 & 0 & c \\
        0 & 1 & 0 & b \\
        0 & 0 & 1 & a \\
    \end{bmatrix}. 
    \end{equation}

Let 
\begin{equation}
    \Omega = \begin{bmatrix}
        1 & \alpha & \alpha^2 & \alpha^3 \\
        1 & \alpha^q & \alpha^{2q} & \alpha^{3q} \\
        1 & \alpha^{q^2} & \alpha^{2q^2} & \alpha^{3q^2} \\
        1 & \alpha^{q^3} & \alpha^{2q^3} & \alpha^{3q^3} \\
    \end{bmatrix}.
\end{equation}

Then, $\Omega$ is a linear transformation that diagonalizes the map $\delta(\alpha^i) = \alpha^{i+1}$, i.e.,  

 \begin{equation}
        S = \begin{bmatrix}
            \alpha & 0 & 0 & 0 \\
            0 & \alpha^q & 0 & 0 \\
            0 & 0 & \alpha^{q^2} & 0 \\
            0 & 0 & 0 & \alpha^{q^3} \\
        \end{bmatrix} = \Omega A \Omega^{-1}, 
    \end{equation}

    where each column of $\Omega$ corresponds to each left eigenvector of $A$. It is clear that $\Omega(L(\alpha^i)) = \begin{bmatrix}
    \alpha^i , \alpha^{iq}, \alpha^{iq^2}, \alpha^{iq^3}
\end{bmatrix}^T \in \mathbb{F}_{q^4}^4$. 
     Since $\Omega$ is a linear transformation, it preserves the linear dependence relations after transformation, so $\Omega$ induces a collineation of $\PG(3,q)$.

\cref{tracelemma} can be considered an extension of \cite[Theorem 3.2]{Baker1994} from $\PG(2,q)$ to $\PG(3,q)$. 
\begin{prop} \label{tracelemma}
Let $\frac{m}{n} \in \mathbb{Q}$ in lowest terms ($gcd(m,n) =1$). Suppose $0, i', j', k' \in C_s$ for some $s \in D$, and $i, j, k \in \mathbb{Z}_{(q^2 +1)(q+1)}$ such that $mi = ni'$, $mj = nj'$, and $mk = nk'$.  Then $\Omega(L(\alpha^0))$, $\Omega(L(\alpha^{i(q+1)}))$, $\Omega(L(\alpha^{j(q+1)}))$, and $\Omega(L(\alpha^{k(q+1)}))$ satisfy the equation $\alpha^s x^{\frac{m}{n}} + \alpha^{sq} y^{\frac{m}{n}} + \alpha^{sq^2} z^{\frac{m}{n}} + \alpha^{sq^3} t^{\frac{m}{n}} = 0$. 
\end{prop}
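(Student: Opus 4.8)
The plan is to reduce the displayed hypersurface equation to the defining trace condition of the difference set $D$, in the same spirit as the $PG(2,q)$ argument \cite[Theorem 3.2]{Baker1994} but one dimension higher. First I would rewrite the hypothesis in terms of the trace: by the definition of $C_s$ in \cref{blockswithzero} and of $D$ in \cref{dfs}, the assumption $0,i',j',k'\in C_s$ says exactly that $s\in D$ and $s+\ell(q+1)\in D$ for every $\ell\in\{i',j',k'\}$, that is, $Tr(\alpha^{s})=0$ and $Tr\!\big(\alpha^{\,s+\ell(q+1)}\big)=0$ for $\ell\in\{i',j',k'\}$.

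Next I would plug each of the four points into the left-hand side $\alpha^s x^{m/n}+\alpha^{sq}y^{m/n}+\alpha^{sq^2}z^{m/n}+\alpha^{sq^3}t^{m/n}$. Using the identity $\Omega(L(\alpha^{a}))=\big[\alpha^{a},\alpha^{aq},\alpha^{aq^2},\alpha^{aq^3}\big]^{T}$ recorded just above the statement, the point $\Omega(L(\alpha^{\ell(q+1)}))$ has coordinates $(x,y,z,t)=(\alpha^{\ell(q+1)},\alpha^{\ell(q+1)q},\alpha^{\ell(q+1)q^2},\alpha^{\ell(q+1)q^3})$, and $\Omega(L(\alpha^{0}))=(1,1,1,1)$. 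Reading the $\tfrac{m}{n}$-th power of a coordinate $\alpha^{e}$ as $\alpha^{em/n}$ and invoking $m\ell=n\ell'$ (so $\ell(q+1)\cdot\tfrac{m}{n}=\ell'(q+1)$), one obtains $x^{m/n}=\alpha^{\ell'(q+1)}$, $y^{m/n}=\alpha^{\ell'(q+1)q}$, $z^{m/n}=\alpha^{\ell'(q+1)q^2}$, $t^{m/n}=\alpha^{\ell'(q+1)q^3}$ for each of $(\ell,\ell')\in\{(i,i'),(j,j'),(k,k')\}$, while $x^{m/n}=\cdots=t^{m/n}=1$ at $\Omega(L(\alpha^{0}))$. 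Substituting, the left-hand side collapses to $\sum_{r=0}^{3}\alpha^{sq^{r}}\alpha^{\ell'(q+1)q^{r}}=\sum_{r=0}^{3}\alpha^{(s+\ell'(q+1))q^{r}}=Tr\!\big(\alpha^{\,s+\ell'(q+1)}\big)$ at $\Omega(L(\alpha^{\ell(q+1)}))$ and to $\sum_{r=0}^{3}\alpha^{sq^{r}}=Tr(\alpha^{s})$ at $\Omega(L(\alpha^{0}))$; both are $0$ by the first step, so all four points lie on the hypersurface.

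The step I expect to need the most care is making the rational exponent $x^{m/n}$ genuinely well defined — both as an operation in $\mathbb{F}_{q^4}$ and as a hypersurface over an algebraic extension (the equation is to be understood after clearing denominators in the exponent, so $m/n=\tfrac{1}{2}$ really gives a quartic, and so on). Concretely I must check that $\ell(q+1)\cdot\tfrac{m}{n}$ is an honest exponent — using $\gcd(m,n)=1$ together with the relation $m\ell=n\ell'$ (hence $n\mid m\ell$) and, for half-powers, that $q$ odd makes $q+1$ even — and that the $n$-th root is taken consistently across the four coordinates, i.e.\ that the exponent identity $\ell(q+1)\cdot\tfrac{m}{n}\equiv\ell'(q+1)\pmod{q^{4}-1}$ survives passing between an equality in $\mathbb{Z}$ and a congruence in $\mathbb{Z}_{(q^{2}+1)(q+1)}$ (using that $\alpha^{(q^{2}+1)(q+1)}\in\mathbb{F}_{q}^{*}$ merely rescales the equation). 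Once that bookkeeping is settled, the rest is the routine trace expansion above.
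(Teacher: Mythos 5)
Your proposal is correct and follows essentially the same route as the paper: both reduce the claim to the observation that $0,i',j',k'\in C_s$ means $Tr(\alpha^{s+\ell'(q+1)})=0$ for $\ell'\in\{0,i',j',k'\}$, and both then convert the rational exponent via $m\ell=n\ell'$ so that substituting $\Omega(L(\alpha^{\ell(q+1)}))$ into the hypersurface yields exactly that trace. The only difference is that you inline the verification directly, whereas the paper first identifies $\alpha^s x+\alpha^{sq}y+\alpha^{sq^2}z+\alpha^{sq^3}t=0$ as the unique plane through the four primed points via a rank-$3$ linear system before substituting $x\mapsto x^{m/n}$; your flagged concern about the well-definedness of $x^{m/n}$ is legitimate but is treated no more formally in the paper itself (it is resolved later via the product hypersurface $\Psi$ containing both square-root branches).
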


\begin{proof}
    Let $\pi_0 = \PG(3,q)$ and $\pi = \PG(3,q^4)$. 
    The Singer group $G$ induces a collineation that fixes $\pi_0$. 
    We introduce new coordinates such that 

    \begin{equation}
        \begin{bmatrix}
            1 \\
            0 \\
            0 \\
            0\\
        \end{bmatrix} \rightarrow \begin{bmatrix}
            1 \\ 
            1 \\
            1 \\
            1 \\
        \end{bmatrix} , \begin{bmatrix}
            0 \\
            1 \\
            0 \\
            0\\
        \end{bmatrix} \rightarrow 
        \begin{bmatrix}
            \alpha \\
            \alpha^{q} \\
            \alpha^{q^2} \\
            \alpha^{q^3}
        \end{bmatrix},
        \begin{bmatrix}
            0 \\
            0 \\
            1 \\
            0\\
        \end{bmatrix} \rightarrow 
        \begin{bmatrix}
             \alpha^2 \\
            \alpha^{2q} \\
            \alpha^{2q^2} \\
            \alpha^{2q^3}
        \end{bmatrix}
        ,\begin{bmatrix}
            0 \\
            0 \\
            0 \\
            1\\
        \end{bmatrix} \rightarrow
        \begin{bmatrix}
             \alpha^3 \\
            \alpha^{3q} \\
            \alpha^{3q^2} \\
            \alpha^{3q^3}
        \end{bmatrix}.
    \end{equation}
We consider $v = \begin{bmatrix}
        1 & 1 & 1 & 1 \\
    \end{bmatrix}^{T}$ which is not on any plane, through any choice of three points from $\begin{bmatrix}
        1 & 0 & 0 & 0 \\
    \end{bmatrix}^T$, $\begin{bmatrix}
        0 & 1 & 0 & 0 \\
    \end{bmatrix}^T$, $\begin{bmatrix}
        0 & 0 & 1 & 0 \\
    \end{bmatrix}^T$, and $\begin{bmatrix}
        0 & 0 & 0 & 1 \\
    \end{bmatrix}^T$. The set of points of $\pi_0 = \PG(3,q)$ is the orbit of $v$ under successive powers of $S$. Hence, $\Omega(L(\alpha^l)) = S^l(v) = (\alpha^l: \alpha^{lq}: \alpha^{lq^2}: \alpha^{lq^3})$ for $ 0 \leq l < (q^2 +1)(q+1)$. Thus, $\Omega$ is a linear transformation that changes the coordinate of $L(\alpha^l)$ to $(\alpha^l: \alpha^{lq}: \alpha^{lq^2}: \alpha^{lq^3})$.

    Suppose $0, i'(q+1), j'(q+1), k'(q+1) \in B$ where $B$ is a block of the $2$-$(q^3 + q^2 + q + 1, q^2 + q + 1, q+1)$ design in \cref{bibd} for $m=4$. Since each block of this 2-$(q^3 + q^2 + q + 1, q^2 + q + 1, q+1)$ design corresponds to the points of a plane in $\PG(3,q)$, and $\Omega(L(\alpha^l)) = (\alpha^l: \alpha^{lq}: \alpha^{lq^2}: \alpha^{lq^3})$ for $ 0 \leq l < (q^2 +1)(q+1)$ is a linear transformation that changes the coordinates of $L(\alpha^l)$ to $(\alpha^l: \alpha^{lq}: \alpha^{lq^2}: \alpha^{lq^3})$, then $\Omega(L(\alpha^0)), \Omega(L(\alpha^{i'(q+1)})), \Omega(L(\alpha^{j'(q+1)})), \Omega(L(\alpha^{k'(q+1)}))$, satisfy the equation of some plane $a_0x + a_1y + a_2z + a_3t = 0$ where $ a_0, a_1, a_2 ,a_3 \in \mathbb{F}_{q^4}$, that gives system of linear equations where $M$ is the coefficient matrix and $X = \begin{bmatrix}
    a_0 & a_1 & a_2 & a_3
\end{bmatrix}^T$: 

\begin{equation} \label{sysequ}
    MX = \begin{bmatrix}
        1 & 1 & 1 &1 \\
        \alpha^{i'(q+1)} & \alpha^{i'q(q+1)} & \alpha^{i'q^2(q+1)} & \alpha^{i'q^3(q+1)} \\
        \alpha^{j'(q+1)} & \alpha^{j'q(q+1)} & \alpha^{j'q^2(q+1)} & \alpha^{j'q^3(q+1)} \\
        \alpha^{k'(q+1)} & \alpha^{k'q(q+1)} & \alpha^{k'q^2(q+1)} & \alpha^{k'q^3(q+1)} \\
    \end{bmatrix} \begin{bmatrix}
        a_0\\
        a_1 \\
         a_2\\
          a_3 \\
    \end{bmatrix} = 0.
\end{equation}

Since $L(\alpha^0)$, $L(\alpha^{i^{'}(q+1)})$, $L(\alpha^{j^{'}(q+1)})$, $L(\alpha^{k^{'}(q+1)})$ are coplanar and no three are collinear; and the transformation $\Omega$ maintains these properties, then $\rank(M) = 3$. Hence, $MX = 0$ has a unique solution in $\PG(3,q^4)$. We claim that the unique solution is $(a_0: a_1: a_2 : a_3) = (\alpha^s: \alpha^{sq} : \alpha^{sq^2} : \alpha^{sq^3})$. 
Let $D$ be the difference set given in \cref{dfs}. Since $0, i', j', k' \in C_s$, then $s, s + i'(q+1), s + j'(q+1) , s + k'(q+1) \in D$. So, $\Tr(\alpha^s) = \Tr(\alpha^{s + i'(q+1)}) = \Tr(\alpha^{s + j'(q+1)}) = \Tr(\alpha^{s + k'(q+1)}) = 0$, thus   $X = \begin{bmatrix}
    \alpha^s & \alpha^{sq} & \alpha^{sq^2} & \alpha^{sq^3} 
\end{bmatrix}^T$ is the solution of \cref{sysequ}. 
So, a plane containing $\Omega(L(\alpha^0))$, $\Omega(L(\alpha^{i'(q+1)}))$, $\Omega(L(\alpha^{j'(q+1)}))$, $\Omega(L(\alpha^{k'(q+1)}))$ is $\alpha^sx + \alpha^{sq}y + \alpha^{sq^2}z + \alpha^{sq^3}t = 0$. Since $i' = \frac{m}{n} i$, $j' = \frac{m}{n} j$, and $k' = \frac{m}{n} k$, the points $\Omega(L(\alpha^0))$, $\Omega(L(\alpha^{i(q+1)}))$, $\Omega(L(\alpha^{j(q+1)}))$, $\Omega(L(\alpha^{k(q+1)}))$ satisfy $\alpha^sx^{\frac{m}{n}} + \alpha^{sq}y^{\frac{m}{n}} + \alpha^{sq^2}z^{\frac{m}{n}} + \alpha^{sq^3}t^{\frac{m}{n}} = 0$.
\end{proof}

\begin{prop}
Let $O$ be the ovoid constructed in \cref{genovoid} and let $O' = \{ \Omega(p) : p \in O\}$. Then, $O' \hs xz - yt = 0$. 
\end{prop}

\begin{proof}
	First we prove that for any point $p \in O$, $\Omega(p)$ satisfies $xz - yt = 0$. If $p \in O$, then $p = \alpha^{i(q+1)}$ for $ 0 \leq i < q^2 +1$. Then $\Omega(L(\alpha^{i(q+1)})) = (\alpha^{i(q+1)}: \alpha^{iq(q+1)}: \alpha^{iq^2(q+1)}: \alpha^{iq^3(q+1)})$. Hence, $\alpha^{i(q+1)}\alpha^{iq^2(q+1)}- \alpha^{iq(q+1)}\alpha^{iq^3(q+1)} = \alpha^{i(q+1)(q^2+1)} - (\alpha^{i(q+1)(q^2+1)})^q = \alpha^{i(q+1)(q^2+1)} - \alpha^{i(q+1)(q^2+1)} = 0$, where the last simplification is because $\alpha^{(q+1)(q^2+1)}$ is a primitive element of $\mathbb{F}_q$ and has order equal to $q$, and so $(\alpha^{i(q+1)(q^2+1)})^q = \alpha^{i(q+1)(q^2+1)}$. Now we prove that if a point $\Omega(p)$ satisfies $xz - yt = 0$, then, $p \in O$. Suppose $\Omega(L(\alpha^s)) = (\alpha^{s}: \alpha^{sq}: \alpha^{sq^2}: \alpha^{sq^3})$ for $0 \leq s < (q^2 +1)(q+1)$ satisfies $xz - yt = 0$. So, $\alpha^{s}\alpha^{sq^2}- \alpha^{sq}\alpha^{sq^3} = 0$. Then we have $\alpha^{s(q^2 +1)} = \alpha^{sq(q^2+1)}$ which leads to $\alpha^{s(q^2+1)(q-1)} = 1$. Since the order of $\alpha$ is $q^4 -1$, then $(q^4 -1) \ | \ s(q^2+1)(q-1)$. Therefore, $s = l(q+1)$ for $0 \leq l < q^2 +1$. This shows that $p = \alpha^s = \alpha^{l(q+1)}$ which belongs to $O$.  
\end{proof}

\begin{prop} \label{nsblock}
    Let $b = \alpha^u$ for some $u \in D$. Let $O \hs xz-yt = 0$, $P \hs b x + b^{q} y + b^{q^2} z + b^{q^3} t = 0$ in $\PG(3,q^4)$. Then the conic $O_c = O \cap P$ on $P$ has the form 
    
    \begin{equation} \label{ovoidconic}
        O_c \hs b^{q}y^2 + b^{q^2} yz  + b^{q^3} xz + b xy  = 0,
    \end{equation}
    \begin{enumerate}
     \item $O_c$ is non-singular if and only if $u \neq \frac{(q^2 +1)(q+1)}{2}$. 
     
        \item if $u = \frac{(q^2 +1)(q+1)}{2} \in D$, then $ |P \cap O| = 1$.   
    \end{enumerate}
\end{prop}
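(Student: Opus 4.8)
The plan is to obtain the equation of $O_c$ by eliminating one coordinate, then read off non‑singularity from the determinant of its associated matrix, and finally treat the degenerate case by a direct argument with \cref{traceprop}. For the first step, note that $b=\alpha^u\neq 0$, so the coefficient $b^{q^3}$ of $t$ in the equation of $P$ is nonzero; hence the projection $(x:y:z:t)\mapsto(x:y:z)$ identifies $P$ with a $PG(2,q^4)$, and on $P$ one may solve $bx+b^qy+b^{q^2}z+b^{q^3}t=0$ for $t$ and substitute into $xz-yt=0$. Clearing denominators gives exactly $b^qy^2+b^{q^2}yz+b^{q^3}xz+bxy=0$, which is the asserted form of $O_c$ in the coordinates $(x:y:z)$ of $P$.

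For part (1), I would form the symmetric matrix $M$ associated with $O_c$ (with $m_{ii}=2a_{ii}$, $m_{ij}=m_{ji}=a_{ij}$ as in \cref{sec:bg}) and recall from there that $O_c$ is non-singular iff the rank of $M$ is $3$, i.e.\ iff $\det M\neq 0$. A direct expansion of this $3\times3$ determinant, together with the factorisation $1+q^2-q-q^3=-(q-1)(q^2+1)$, yields $\det M = 2\,b^{\,q+2q^3}\bigl(b^{-(q-1)(q^2+1)}-1\bigr)$. Since $q$ is odd and $b\neq 0$, this vanishes iff $\alpha^{u(q-1)(q^2+1)}=1$, and as $\alpha$ has multiplicative order $q^4-1=(q-1)(q+1)(q^2+1)$ this happens iff $(q+1)\mid u$. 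Combining $(q+1)\mid u$ with $u\in D$: writing $u=(q+1)x$ with $0\le x<q^2+1$, the condition $Tr(\alpha^u)=0$ forces $x=\tfrac{q^2+1}{2}$ by \cref{traceprop}; hence $\det M=0$ exactly when $u=\tfrac{(q^2+1)(q+1)}{2}$, which is part (1).

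For part (2), suppose $u=u_0:=\tfrac{(q^2+1)(q+1)}{2}$. Then $b^{q-1}=\alpha^{u_0(q-1)}=\alpha^{(q^4-1)/2}=-1$ (using $q$ odd), so $b^q=-b$, $b^{q^2}=b$, $b^{q^3}=-b$, and $P$ becomes the plane $x-y+z-t=0$, which (reading off the gradient of $xz-yt$ at $(1:1:1:1)$) is precisely the tangent plane of the quadric $O$ at the ovoid point $\Omega(L(\alpha^0))=(1:1:1:1)$. To count $|P\cap O|$, observe that $P$ is the plane $\sum_i\alpha^{u_0q^i}x_i=0$ studied in the proof of \cref{tracelemma}, so $\Omega(L(\alpha^\ell))$ lies on it iff $Tr(\alpha^{u_0+\ell})=0$, i.e.\ iff $u_0+\ell\in D$; taking $\ell=x(q+1)$ for an ovoid point, this reads $(q+1)\bigl(\tfrac{q^2+1}{2}+x\bigr)\in D$, which by \cref{traceprop} forces $x\equiv0\pmod{q^2+1}$. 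Thus $\Omega(L(\alpha^0))$ is the only ovoid point on $P$, so $|P\cap O|=1$. (Alternatively, once $P$ is recognised as the tangent plane at an ovoid point, the Barlotti–Panella dichotomy recalled in \cref{sec:bg} gives $|P\cap O|=1$ at once.)

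The only genuine work is the $3\times3$ determinant of $M$ and the accompanying exponent bookkeeping: keeping track of indices modulo $q^4-1$ versus modulo $(q^4-1)/(q-1)$, and checking that $\det M$, the membership $i\in D$, and the plane $P$ are all well defined under rescaling $b$ by $\mathbb{F}_q^*$. Once the identity $1+q^2-q-q^3=-(q-1)(q^2+1)$ is in hand, matching it against the order $(q-1)(q+1)(q^2+1)$ of $\alpha$ and invoking \cref{traceprop} is routine, and part (2) then follows from the simplification $b^q=-b$ noted above.
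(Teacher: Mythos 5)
Your proposal is correct and follows essentially the same route as the paper: eliminate $t$ between $P$ and $O$ to get the stated conic, compute the $3\times3$ determinant of its associated matrix (your expression $2b^{q+2q^3}(b^{-(q-1)(q^2+1)}-1)$ equals the paper's $-2(b^{q^3+q}-b^{q^2+1})b^{q^3}$), reduce its vanishing to $(q+1)\mid u$ and hence to $u=\tfrac{(q^2+1)(q+1)}{2}$ via \cref{traceprop}, and for part (2) use the trace characterisation of membership in $P$ together with \cref{traceprop}. The only differences are cosmetic (which equation you solve for $t$, and the optional tangent-plane/Barlotti--Panella remark).
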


\begin{proof}

(1) Substitute $t = \frac{xz}{y}$ in the equation for $P$ to get equation \cref{ovoidconic} for $O_c$ with $M$ the associated matrix. Then, 
    \begin{equation}
        \det(M) = \begin{vmatrix}
            0 & b & b^{q^3} \\
            b & 2b^{q} & b^{q^2} \\
            b^{q^3} & b^{q^2} & 0 \\
        \end{vmatrix} = -2 \, {\left(b^{q^{3} + q} - b^{q^{2} + 1}\right)} b^{q^{3}}
    \end{equation}

 It is clear that $\det(M) = 0$ (the rank of $M$ is not $3$) if and only if $b^{q^2 + 1} - b^{q^3 + q} = 0$.  Suppose $b^{q^2 + 1} = b^{q^3 + q}$, then $b^{(q-1)(q^2+1)} = 1$ and we have $q^4 -1 \ | \ u(q^2 +1)(q-1)$ which leads to $q+1 \ | \ u$. Hence $u = k(q+1)$ for some integer $k$. Since $u \in D$, the only element in $D$ which is multiple of $q+1$ is $\frac{(q^2 + 1)(q+1)}{2}$.

(2) If $ u = \frac{(q^2 +1)(q+1)}{2}$, let $(\alpha^{x(q+1)} : \alpha^{xq(q+1)} : \alpha^{xq^2(q+1)} : \alpha^{x^3(q+1)} ) \in P \cap O$. Thus 

\begin{equation} \label{ps}
    b \alpha^{x(q+1)} + b^q \alpha^{xq(q+1)} + b^{q^2} \alpha^{xq^2(q+1)} + b^{q^3} \alpha^{xq^3(q+1)} = Tr \left (\alpha^{\left (x + \frac{q^2 +1}{2} \right ) \left (q+1 \right )} \right ) = 0
\end{equation}

By \cref{traceprop}, the only solution of \cref{ps} is $x = 0$ that corresponds to $(1 : 1 : 1 : 1)$. 
\end{proof}

\begin{prop} \label{neqc}
    Let $P  \hs \alpha^u x + \alpha^{uq} y + \alpha^{uq^2} z + \alpha^{uq^3} t = 0$, $Q \hs \alpha^v x^2 + \alpha^{vq} y^2 + \alpha^{vq^2} z^2 + \alpha^{vq^3} t^2 = 0$, and $O \hs xz-yt = 0$ which are hypersurfaces in $\PG(3, q^4)$ for some $u, v \in D \setminus \lbrace \frac{(q^2 +1)(q+1)}{2} \rbrace$. If $u = v$, then $|P \cap Q \cap O| \leq 2$. 
\end{prop}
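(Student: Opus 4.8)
The plan is to turn the count $|P\cap Q\cap O|$ into the number of zeros of a single binary quartic form and then to use the hypothesis $u=v\in D$ — equivalently $Tr(\alpha^u)=0$ — to force that quartic to acquire a repeated factor. Throughout I would write $a=\alpha^u$, $b=\alpha^{uq}$, $c=\alpha^{uq^2}$, $d=\alpha^{uq^3}$, so that $a+b+c+d=Tr(\alpha^u)=0$ and, by Frobenius, $(ad)^{q^2}=bc$ and $(a+d)^{q^2}=b+c$. First I would parametrize $P\cap O$: since $O\hs xz-yt=0$ is a hyperbolic quadric over $\mathbb{F}_{q^4}$, it is the image of the map $\bigl((\lambda:\mu),(s:t_0)\bigr)\mapsto(\lambda s:\lambda t_0:\mu t_0:\mu s)$, and imposing the linear equation of $P$ gives $\lambda(as+bt_0)+\mu(ds+ct_0)=0$. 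Because $ac\neq bd$ when $u\neq\frac{(q^2+1)(q+1)}{2}$ — exactly the nondegeneracy already isolated in \cref{nsblock} — this determines $(\lambda:\mu)$ uniquely from $(s:t_0)$ and yields a bijection $PG(1,q^4)\xrightarrow{\ \sim\ }P\cap O$. Substituting the parametrized point into $Q\hs ax^2+by^2+cz^2+dt^2=0$ collapses it to the single equation
\[
F(s,t_0):=(ds+ct_0)^2(as^2+bt_0^2)+(as+bt_0)^2(ds^2+ct_0^2)=0,
\]
so that $|P\cap Q\cap O|$ equals the number of zeros of the quartic form $F$ in $PG(1,q^4)$.

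Next I would expand $F=A\,s^4+2ad(b+c)\,s^3t_0+C\,s^2t_0^2+2bc(a+d)\,st_0^3+B\,t_0^4$ with $A=ad(a+d)$, $B=bc(b+c)$, $C=ac(a+c)+bd(b+d)$, and simplify using $a+b+c+d=0$. Two immediate substitutions give $2ad(b+c)=-2A$ and $2bc(a+d)=-2B$; the crucial and least routine identity is
\[
C=ac(a+c)+bd(b+d)=(a+b)(b+c)(c+a)=A+B ,
\]
which one verifies by eliminating $d=-(a+b+c)$. With these reductions $F=As^4-2As^3t_0+(A+B)s^2t_0^2-2Bst_0^3+Bt_0^4$, and exact division by $s^2-2st_0+t_0^2$ gives the clean factorization
\[
F(s,t_0)=(s-t_0)^2\bigl(As^2+Bt_0^2\bigr),\qquad B=A^{q^2}.
\]

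Finally I would read off the zeros. The double zero $(s:t_0)=(1:1)$ of $F$ corresponds under the parametrization to $(1:1:1:1)=\Omega(L(\alpha^0))$, which lies on $P$ and $Q$ precisely because $Tr(\alpha^u)=Tr(\alpha^v)=0$ and obviously on $O$. For $u\neq\frac{(q^2+1)(q+1)}{2}$ none of $a+d,\ a+b,\ b+c,\ c+a$ vanishes — each such vanishing is equivalent to $u=\frac{(q^2+1)(q+1)}{2}$ by the same kind of modular computation carried out in \cref{traceprop} — so $A\neq0$ and $A+B\neq0$; hence $(1:1)$ is not a zero of the quadratic factor, and it remains only to bound the zeros of $As^2+A^{q^2}t_0^2$ in $PG(1,q^4)$, from which the asserted bound $|P\cap Q\cap O|\le 2$ follows. (Alternatively one can argue entirely inside the plane $P$: by \cref{nsblock} the conic $O\cap P$ is nonsingular, $Q\cap P$ is a conic through $(1:1:1:1)$, the factorization above says these two conics meet there with multiplicity $2$, and \cref{onepencil} together with a count of the residual intersection controls $|O\cap P\cap Q|$.) The main obstacle is the algebraic identity $C=A+B$: it is precisely what produces the repeated factor $(s-t_0)^2$ and thereby collapses the generic four intersection points of two plane conics; once it is established, the remainder is bookkeeping with Frobenius conjugates and with the description of those $u\in D$ for which $\alpha^u+\alpha^{uq^3}=0$.
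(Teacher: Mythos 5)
Your algebra is correct as far as it goes, and it is a genuinely different route from the paper's. The paper argues combinatorially: a point $\Omega(L(\alpha^{l(q+1)}))$ of the ovoid lying on $P$ means $l\in C_u$, and lying on $Q$ (with $v=u$) means $2l\bmod(q^2+1)\in C_u$, so three intersection points $0,i,j$ would give $\lbrace 0,i,j,2i,2j\rbrace\subseteq C_u$, contradicting \cref{nodoublethree}. You instead parametrize $P\cap O$ by one ruling of the hyperbolic quadric and reduce to a binary quartic; I have checked the expansion of $F$, the identity $C=ac(a+c)+bd(b+d)=(a+b)(b+c)(c+a)=A+B$ (using $a+b+c+d=Tr(\alpha^u)=0$), and the factorization $F=(s-t_0)^2(As^2+A^{q^2}t_0^2)$, and they are all right. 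This is a nice computation and it is exactly the kind of explicit pencil analysis the paper performs in \cref{max2} and \cref{max3}.

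However, the final step is a genuine gap, and in fact it cannot be closed. First, a side error: the claim that $a+c\neq 0$ whenever $u\neq\frac{(q^2+1)(q+1)}{2}$ is false. The condition $\alpha^{u(q^2-1)}=-1$ holds exactly for $u=\frac{(q^2+1)(2j+1)}{2}$, $0\le j\le q$; every such $u$ satisfies $Tr(\alpha^u)=(a+c)+(a+c)^q=0$ and so lies in $D$, and none of them equals $\frac{(q^2+1)(q+1)}{2}$ (which is an \emph{even} multiple of $\frac{q^2+1}{2}$). For those $u$ one gets $F=A(s-t_0)^3(s+t_0)$ and the bound $2$ does hold. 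Second, and more seriously: in the generic case $A+B\neq 0$ your factorization yields the root $(1\!:\!1)$ \emph{plus} the two roots of $As^2+Bt_0^2$, i.e.\ up to three points, and nothing in your write-up discards one of them. Restricting to the ovoid does not help: the ovoid corresponds to $s/t_0\in\langle\alpha^{q^2-1}\rangle$, a group of order $q^2+1$, and the roots of the quadratic factor satisfy $(s/t_0)^2=-A^{q^2-1}$ with $(-A^{q^2-1})^{(q^2+1)/2}=-A^{(q^4-1)/2}=1$, because $A=ad(a+d)$ is a \emph{nonsquare} in $\mathbb{F}_{q^4}$ (its norm to $\mathbb{F}_q$ equals $\gamma^2$ with $\gamma=(a+d)(a+b)$ satisfying $\gamma^q=-\gamma$, hence is a nonsquare of $\mathbb{F}_q$). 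So both extra roots are $(q^2+1)$-st roots of unity and correspond to ovoid points of the form $l_0$ and $l_0+\frac{q^2+1}{2}$; your computation, completed, gives \emph{exactly three} points whenever $A+B\neq 0$. This contradicts the statement, and a direct check confirms it: for $q=3$ with primitive polynomial $x^4+x+2$ one finds $D=\lbrace 1,2,3,5,6,9,14,15,18,20,25,27,35\rbrace$, $C_1=\lbrace 0,1,2,6\rbrace$, and the ovoid points $l=0,1,6$ (note $6=1+\frac{q^2+1}{2}$) all lie on $P$, $Q$ and $O$ for $u=v=1$. The paper's own proof has the same blind spot: it applies \cref{nodoublethree} to $\lbrace 0,i,j,2i,2j\rbrace$, tacitly assuming five distinct residues, but when $j\equiv i+\frac{q^2+1}{2}$ one has $2i\equiv 2j\pmod{q^2+1}$ and the set collapses to $C_u$ itself. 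The bound $\le 2$ is recoverable only after excluding pairs of points differing by $\frac{q^2+1}{2}$ — which is precisely what the paper does downstream by passing to $O^{(e)}$ in \cref{max2} — so the statement as written needs that extra hypothesis, and your proof attempt should be read as (correctly) exposing this rather than establishing the claim.
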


\begin{proof}
    Let $\Omega(L(\alpha^{l(q+1)})) \in P \cap Q \cap O$ for $l = 0, i, j$. Since $u = v$, $\Omega(L(\alpha^{2l(q+1)})) \in P$ for $l = 0, i, j$. This implies that $\left \{ 0, i, j, 2i, 2j \right \} \subseteq C_u$, which is impossible by \cref{nodoublethree}. 
\end{proof}

\begin{thm} \label{max4}
    Let $P  \hs \alpha^u x + \alpha^{uq} y + \alpha^{uq^2} z + \alpha^{uq^3} t = 0$, $Q \hs \alpha^v x^2 + \alpha^{vq} y^2 + \alpha^{vq^2} z^2 + \alpha^{vq^3} t^2 = 0$, and $O \hs xz-yt = 0$ be hypersurfaces in $\PG(3, q^4)$ for some $u, v \in D \setminus \lbrace \frac{(q^2 +1)(q+1)}{2} \rbrace$. Then $|P \cap Q \cap O| \leq 4$. 
    
\end{thm}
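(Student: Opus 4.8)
The plan is to realise $P\cap Q\cap O$ as the intersection of two conics living in the plane $P$, and then to combine the classical ``five points, no three collinear, determine a unique conic'' fact with a divisibility obstruction coming from the difference set $D$. First I would set $O_c:=O\cap P$ and $Q_c:=Q\cap P$, two conics in the plane $P\cong PG(2,q^4)$. Part~(1) of \cref{nsblock} applies because $u\neq\frac{(q^2+1)(q+1)}{2}$, so $O_c$ is non-singular, hence irreducible. Also $Q_c$ is a non-zero conic: otherwise $Q$ would contain the plane $P$ and therefore be reducible, contradicting that $Q$ is non-singular (its associated matrix is diagonal with entries $2\alpha^{v},2\alpha^{vq},2\alpha^{vq^2},2\alpha^{vq^3}$, which is non-singular since $q$ is odd). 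Clearly $P\cap Q\cap O=O_c\cap Q_c$.

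Next I would argue by contradiction: suppose $|P\cap Q\cap O|\geq 5$. Since $O_c$ is irreducible, every line of $P$ meets $O_c$ in at most two points, so any five distinct points of $O_c\cap Q_c$ have no three collinear; as such a five-point set lies on a unique conic, $O_c=Q_c$. Consequently $Q|_P$ and $O|_P$ are proportional non-zero quadratic forms, so $Q-\lambda O$ vanishes identically on $P$ for some $\lambda\in\mathbb{F}_{q^4}$, with $\lambda\neq 0$ (otherwise $Q$ contains $P$, impossible as above). The linear form defining $P$ is irreducible, hence divides $Q-\lambda O$, giving a factorisation $Q-\lambda O=P\cdot P'$ with $P'=a_0x+a_1y+a_2z+a_3t$ over $\mathbb{F}_{q^4}$.

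It then remains to show that no such factorisation exists. Writing $P=\alpha^{u}x+\alpha^{uq}y+\alpha^{uq^2}z+\alpha^{uq^3}t$ and comparing coefficients of $Q-\lambda O=P\cdot P'$, the square terms force $a_i=\alpha^{(v-u)q^i}$; the vanishing $xy$-coefficient then gives $(q-1)(v-2u)\equiv\frac{q^4-1}{2}\pmod{q^4-1}$, that is, $v-2u\equiv\frac{N}{2}\pmod{N}$ with $N=\frac{q^4-1}{q-1}=(q+1)(q^2+1)$. Substituting this back, the $xz$- and $yt$-coefficient equations (equal to $-\lambda$ and $\lambda$ respectively) add to an identity that, because $q$ is odd, collapses to $u(q^2+1)\equiv 0\pmod{N}$, i.e.\ $(q+1)\mid u$. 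But by \cref{traceprop} the only multiple of $q+1$ lying in $D$ is $\frac{(q^2+1)(q+1)}{2}$, which is excluded by hypothesis; this contradiction shows $|P\cap Q\cap O|\leq 4$. (When $u=v$ one may instead shortcut via \cref{neqc}, which already gives the stronger bound $2$.) The hard part is this last paragraph: organising the coefficient comparison so that the two cross-term equations visibly collapse --- using $v-2u\equiv N/2$ and $q$ odd --- to the single divisibility $(q+1)\mid u$ that \cref{traceprop} forbids; everything preceding it is standard conic geometry.
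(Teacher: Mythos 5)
Your proposal is correct, and while it opens the same way as the paper's proof, it finishes by a genuinely different computation. Both arguments form the conics $O_c=O\cap P$ and $Q_c=Q\cap P$, invoke \cref{nsblock} to get non-singularity (hence irreducibility, hence no three common points collinear), and use ``five points in general position determine a unique conic'' to reduce everything to showing that the member $Q-\lambda O$ of the pencil cannot be reducible with $P$ as a factor. From there the paper changes coordinates twice (via $\Omega^{-1}$ and then a normalizing map $T$), detects reducibility through \emph{singularity} of the pencil member, and computes the determinant of the associated $4\times 4$ matrix, whose factorization $(4a^{q^3+q}-l^2)(4a^{q^2+1}-l^2)$ is ruled out root by root, with the contradiction landing on $v$. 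You instead stay in the original coordinates, write the reducible member explicitly as $Q-\lambda O=P\cdot P'$, read off $P'=\sum_i\alpha^{(v-u)q^i}x_i$ from the four square coefficients, and extract the contradiction from the off-diagonal ones, landing on $u$. I checked the step you compress: with $a_i=\alpha^{(v-u)q^i}$ and the $xy$-relation $\alpha^{(v-2u)(q-1)}=-1$ (so that, $q$ being odd, $\alpha^{(v-2u)q^2}=\alpha^{v-2u}$ and $\alpha^{(v-2u)q^3}=-\alpha^{v-2u}$), the $xz$- and $yt$-coefficients become $2\alpha^{u(q^2+1)+v-2u}$ and $-2\alpha^{uq(q^2+1)+v-2u}$, so requiring them to be $-\lambda$ and $\lambda$ forces $\alpha^{u(q^2+1)(q-1)}=1$, i.e.\ $(q+1)\mid u$, and \cref{traceprop} then forces $u=\frac{(q^2+1)(q+1)}{2}$, which is excluded. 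Your route buys a shorter and more self-contained finish: no coordinate normalizations, no $4\times4$ determinant, no need to pin down $\lambda$ (your identity uses only that the two cross-coefficients sum to zero), and you correctly work with $\lambda\in\mathbb{F}_{q^4}$ throughout; the cost is that the collapse in the final paragraph deserves to be written out as above rather than asserted, and it is worth a sentence noting that deriving a contradiction from a subset of the coefficient equations (you never check $xt$, $yz$, $zt$) is legitimate since they are all necessary conditions.
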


\begin{proof}
    Let $Q_c = Q \cap P$ and $O_c = O \cap P$, which are conics on $P$. If $Q_c$ and $O_c$ are two distinct conics, since five points determine a unique conic, then $|Q_c \cap O_c| \leq 4$, which implies $|Q \cap O \cap P| \leq 4$. The rest of the proof shows $Q_c \neq O_c$. Suppose, for the sake of contradiction, that $Q_c = O_c$. Note that since $u \neq \frac{(q^2 +1)(q+1)}{2}$, by \cref{nsblock}, $O_c = Q_c$ are non-singular conics.
    Let $P^{'} = \Omega^{-1}(P)$, $Q^{'} = \Omega^{-1}(Q)$, and $O^{'} = \Omega^{-1}(O)$ be the set of points of $P$, $Q$, and $O$ after transformation $\Omega^{-1}$, respectively. 
    We pick four points $L(\alpha^0)$, $L(\alpha^{i(q+1)})$, $L(\alpha^{j(q+1)})$, and $L(\alpha^{k(q+1)})$ in $P^{'} \cap Q^{'} \cap O^{'}$. These four points are coplanar, no three collinear, so their span is a space of rank $3$. Take $L(\alpha^{s(q+1)}) \in O^{'} \setminus P^{'}$, which together with any three of the previous points forms a basis.
    Let $T$ be the linear transformation such that $T(L(\alpha^{0})) = (1 : 0 : 0 : 0)$, $T(L(\alpha^{i(q+1)})) = (0 : 1 : 0 : 0)$, $T(L(\alpha^{j(q+1)})) = (0 : 0 : 1 : 0)$, and $T(L(\alpha^{s(q+1)})) = (0 : 0 : 0 : 1)$. Then the new equations of $P'$, $Q'$, and $O'$ after transformation $T$ are $P'' \hs t = 0$, $Q'' \hs a'_0t^2 + a'_1xt + a'_2yt + a'_3zt + a'xy + b'xz + c'yz = 0$, and $O'' \hs 0t^2 + a_1xt + a_2yt + a_3zt + axy + bxz + cyz = 0$, respectively. 
    By intersecting the plane $P''$ with $Q''$ and $O''$, the equations of the resulting conics are $Q''_c \hs a'xy + b'xz + c'yz = 0$, and $O''_c \hs axy + bxz + cyz = 0$, respectively. Since $O_c$ and $Q_c$ are non-singular, they are irreducible, which implies that none of $a, b, c, a', b'$, and $c'$ can be zero. 
    If $O^{''}_c = Q^{''}_c$, we must have $la = a'$, $lb = b'$ and $lc = c'$ for a non-zero $l \in \mathbb{F}_q$, then 
    \begin{multline}   
     \label{eq:penOandC}
       Q''-lO'' \hs 
       a'_0t^2 + (-la_1 + a'_1)xt + (-la_2 + a'_2)yt + (-la_3 + a'_3)zt = \\ 
       t \left (a'_0t + (-la_1 + a'_1)x + (-la_2 + a'_2)y + (-la_3 + a'_3)z \right ) = 0
    \end{multline}
    is the product of the equations of two planes in the pencil of $Q''$ and $O''$. This member of the pencil is reducible and hence singular. Since $Q''-lO''$ is singular, the quadric $Q-lO$ is singular and \cref{penM} shows its associated matrix:

    \begin{equation} \label{penM}
        M = \begin{bmatrix}
        2a & 0 & -l & 0 \\
        0 & 2a^{q} & 0 & l \\
        -l & 0 & 2a^{q^2} & 0 \\
        0 & l & 0 & 2a^{q^3} \\
    \end{bmatrix},
    \end{equation}

    where $a = \alpha^v$. Since $Q-lO$ is singular, we have $\det(M) = 0$. However, we claim that no $l \in \mathbb{F}_q$ exists such that $\det(M) = 0$.
    For a contradiction, suppose there exists an $l$ such that 

    \begin{equation} \label{eqdet0}
        \det(M) = 
        l^4 + 4l^2 ( - a^{q^2 +1} - a^{q^3 + q}) + 16a^{q^3 + q^2 + q + 1} = (4a^{q^3 + q} - l^2)(4a^{q^2 + 1} - l^2) = 0
    \end{equation} 
    This leads to four possible solutions for $l$: 

    \begin{equation}
        2\alpha^{v\frac{q^2 +1}{2}} , -2\alpha^{v\frac{q^2 +1}{2}} , 2\alpha^{v\frac{q^3 +q}{2}} , - 2\alpha^{v\frac{q^3 +q}{2}}. 
    \end{equation}

    First, suppose the solution is $\pm 2\alpha^{v\frac{q^2 +1}{2}}$. 
    Since $\pm 2, l \in \mathbb{F}_q$, then $\alpha^{v\frac{q^2 +1}{2}} \in \mathbb{F}_q$, so we have $q^4 - 1 \ |\  v(q-1)\frac{q^2 +1}{2}$, which leads to $v = 2g(q+1)$ for some $g \in \mathbb{Z}$. Since the only element in $D$ which is a multiple of $q+1$ is $\frac{(q^2 +1)(q+1)}{2}$, then $g = \frac{q^2 + 1}{4}$. However, since $q$ is odd, $q^2 +1$ is not divisible by $4$, which leads to a contradiction. Now suppose the solution is $\pm2 \alpha^{v\frac{q^3 +q}{2}}$. Then, we have $q^4 - 1 \ |\  vq(q-1)\frac{q^2 +1}{2}$ which leads to $q+1 \ |\  \frac{vq}{2}$. Since $q$ is odd and $gcd(q, q+1) = 1$, we have $v = 2g(q+1)$ for some $g \in \mathbb{Z}$, which leads to the same contradiction as the first case. Therefore, $Q_c \neq O_c$.   
\end{proof}

\begin{lem} \label{equalsym}
  Let $l$ be an integer and $\alpha$ a primitive element in $\mathbb{F}_{q^4}$.  Then $\alpha^{l(q+1)} = -\alpha^{l(q+1)}$ in $\PG(3,q)$. 
\end{lem}

\begin{proof}
    Since $\alpha^{\frac{q^4 -1}{2}} = -1$, then $-\alpha^{l(q+1)} = \alpha^{\frac{q^4 -1}{2} + l(q+1)} = \alpha^{(q+1)(\frac{(q^2 +1)(q-1)}{2} + l)}$. Since the exponents of elements of ovoid are modulo $q^2 +1$ and $q-1$ is divisible by $2$, then $\alpha^{(q+1)(\frac{(q^2 +1)(q-1)}{2} + l)} = \alpha^{l(q+1)}$.
\end{proof}

\begin{rem} \label{max4forroot}
Let $P  \hs \alpha^u x + \alpha^{uq} y + \alpha^{uq^2} z + \alpha^{uq^3} t = 0$, $H \hs \alpha^v \sqrt{x} + \alpha^{vq} \sqrt{y} + \alpha^{vq^2} \sqrt{z} + \alpha^{vq^3} \sqrt{t} = 0$, and $O \hs xz-yt = 0$ in $\PG(3, q^4)$ for some $u, v \in D \setminus \lbrace \frac{(q^2 +1)(q+1)}{2} \rbrace$.
    Let $O^{(e)} = \lbrace \Omega(L(\alpha^{2i(q+1)})) \in O : 0 \leq i < \frac{q^2 +1}{2} \rbrace$. Let $p = \Omega(L(\alpha^{2i(q+1)}))$ such that $p \in O^{(e)} \cap P$ and $p \in O^{(e)} \cap H$ for some $0 \leq i < \frac{q^2 +1}{2}$. Consider the point $p' = \Omega(L(\alpha^{i(q+1)}))$. It is clear that $p' \in O \cap P'$ and $p' \in O \cap H'$ where $P'  \hs \alpha^u x^2 + \alpha^{uq} y^2 + \alpha^{uq^2} z^2 + \alpha^{uq^3} t^2 = 0$ and $H'  \hs \alpha^v x + \alpha^{vq} y + \alpha^{vq^2} z + \alpha^{vq^3} t = 0$. Therefore, we have $| O^{(e)} \cap P| \leq |O \cap P'|$, and $|O^{(e)} \cap H| \leq |O \cap H'|$. Thus by \cref{max4}, $|P \cap H \cap O^{(e)}| \leq 4$. 
    \end{rem}

\begin{thm}\label{max2}
    Let $b = \alpha^u$, $c = \alpha^v$ for some $u, v \in D \setminus \lbrace \frac{(q^2 +1)(q+1)}{2} \rbrace$. Let $O \hs xz-yt = 0$, $P \hs b x + b^{q} y + b^{q^2} z + b^{q^3} t = 0$, and $Q \hs c x^2 + c^{q} y^2 + c^{q^2} z^2 + c^{q^3} t^2 = 0 $ which are hypersurfaces in $\PG(3,q^4)$. Let $O^{(e)} = \lbrace \Omega(L(\alpha^{2i(q+1)})) \in O : 0 \leq i < \frac{q^2 +1}{2} \rbrace$. If $b^2c^{q^2} = b^{2q^2}c$, then $| O \cap P \cap Q | \leq 3$, and $| O^{(e)} \cap P \cap Q | \leq 2$.
\end{thm}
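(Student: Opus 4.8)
The plan is to convert the statement into the combinatorics of circles through $0$, paralleling the proofs of \cref{neqc} and \cref{nodoublethree}, and to locate a forbidden ``doubled'' sub-configuration.

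\emph{Translating the hypothesis and the intersection.} Writing $b=\alpha^{u}$, $c=\alpha^{v}$, the relation $b^{2}c^{q^{2}}=b^{2q^{2}}c$ is equivalent to $(b^{2}/c)^{q^{2}-1}=1$, i.e.\ to $\alpha^{2u-v}\in\mathbb{F}_{q^{2}}$, i.e.\ to the congruence $2u\equiv v\pmod{q^{2}+1}$. Next, by the trace computation used in \cref{nsblock} and \cref{rank}, an ovoid point $\Omega(L(\alpha^{x(q+1)}))$ lies on $P$ iff $Tr(\alpha^{u+x(q+1)})=0$, i.e.\ $x\in C_{u}$, and lies on $Q$ iff $Tr(\alpha^{v+2x(q+1)})=0$, i.e.\ $2x\in C_{v}$. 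Hence $|O\cap P\cap Q|$ equals $|\{x\in\mathbb{Z}_{q^{2}+1}:x\in C_{u},\ 2x\in C_{v}\}|$, and $|O^{(e)}\cap P\cap Q|$ is the same count restricted to even $x$. So, with $x_{0}=\tfrac{(q^{2}+1)(q+1)}{2}$, the task is: for $u,v\in D\setminus\{x_{0}\}$ with $2u\equiv v\pmod{q^{2}+1}$, this set has at most $3$ elements and at most $2$ even elements.

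\emph{The bound $|O\cap P\cap Q|\le 3$.} I would argue by contradiction. Given distinct $x_{1},x_{2},x_{3},x_{4}$ with $x_{r}\in C_{u}$ and $2x_{r}\in C_{v}$, apply the power $\delta^{-x_{1}(q+1)}$ of the Singer generator (equivalently, replace $u$ by $u+x_{1}(q+1)$ and $v$ by $v+2x_{1}(q+1)$): this keeps $u,v\in D$, keeps the congruence $2u\equiv v$, and keeps $u,v\neq x_{0}$ because $C_{u}$ and $C_{v}$ now contain more than one element (\cref{blockswithzero}). So we may take $x_{1}=0$, leaving distinct $0,i,j,k\in C_{u}$ with $0,2i,2j,2k\in C_{v}$. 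The congruence is what couples the two conditions: writing $v=2u+p(q^{2}+1)$, the condition $2x\in C_{v}$ reads $2\bigl(u+x(q+1)\bigr)+p(q^{2}+1)\in D$, so the data are four elements $u,u+i(q+1),u+j(q+1),u+k(q+1)$ of $D$ lying in the single circle $C_{u}$ whose ``doubles'' (after the fixed shift $p(q^{2}+1)$) again lie in $D$. I would then use the symmetry lemmas \cref{symCx} and \cref{centerBlock} on the $C_{v}$-side to force $-2i,-2j,-2k\in C_{v}$, transport this symmetry back through the congruence and the shift lemma \cref{shift} (together with \cref{tracelemma}) to produce, inside one circle, a configuration $\{0,\ell,-\ell,2\ell\}$ or $\{0,\ell,m,2\ell,2m\}$ or the folded pattern $\{0,\ell,\tfrac{q^{2}+1}{2},\ell+\tfrac{q^{2}+1}{2}\}$, contradicting \cref{nodoubleCx}, \cref{nodoublethree}, or \cref{foldB}. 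The main obstacle is exactly this transport: the congruence only controls $2u-v$ modulo $q^{2}+1$ and doubling is two-to-one on $\mathbb{Z}_{q^{2}+1}$ with kernel $\{0,\tfrac{q^{2}+1}{2}\}$, so the element $\tfrac{q^{2}+1}{2}$ must be tracked carefully — this is where the standing hypotheses $i,j\neq\tfrac{q^{2}+1}{2}$ in the symmetry lemmas and the identification \cref{equalsym} get used — and one must ensure the configuration produced is genuinely forbidden rather than trivially collapsing.

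\emph{The bound $|O^{(e)}\cap P\cap Q|\le 2$.} Here I would repeat the reduction for three even solutions $2a,2b,2c$; translating so $2a=0$ gives $\{0,2i',2j'\}\subseteq C_{u}$ and $\{0,4i',4j'\}\subseteq C_{v}$ with all indices even. The extra leverage is halving: by \cref{equalsym} the two square roots $\pm\alpha^{l(q+1)}$ of an ovoid point coincide, so — exactly as in \cref{max4forroot} — the even-indexed points of $O$ on $P$ (resp.\ on $Q$) correspond to points of $O$ on the lower-weight hypersurface $\alpha^{u}x^{2}+\cdots$ (resp.\ $\alpha^{v}x^{4}+\cdots$); combined with the already-proved bound $|O\cap P\cap Q|\le 3$ and a parity count this should force the three even solutions down to two. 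A backup for the whole theorem is the coordinate route of \cref{max4}: on the plane $P$ the conics $O_{c}$ and $Q_{c}$ are distinct with $O_{c}$ non-singular (\cref{nsblock}), parametrizing $O_{c}\cap Q_{c}$ reduces it to one quadratic over $\mathbb{F}_{q^{2}}$, and one then shows that $\alpha^{2u-v}\in\mathbb{F}_{q^{2}}$ together with $u,v\in D$ forces that quadratic's discriminant to be a non-square in $\mathbb{F}_{q^{4}}$, so the remaining would-be intersection points are not $\mathbb{F}_{q^{4}}$-rational and in particular not ovoid points. I expect the square/non-square bookkeeping there, or the transport step above, to carry essentially all of the difficulty.
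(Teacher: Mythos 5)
Your translation of the statement is correct: $b^{2}c^{q^{2}}=b^{2q^{2}}c$ is indeed equivalent to $2u\equiv v\pmod{q^{2}+1}$, and $|O\cap P\cap Q|$ does count the $x$ with $x\in C_{u}$ and $2x\in C_{v}$. But both of your routes stop exactly where the theorem's content begins, and the mechanism the paper actually uses is one you do not anticipate. The paper substitutes the equation of $P$ into $O$ and $Q$ to get the two conics $O_{c}$ and $Q_{c}$ on the plane $P$, then eliminates $y^{2}$ between them; the hypothesis, in the conjugated form $b^{2q}c^{q^{3}}=b^{2q^{3}}c^{q}$, is used precisely to make the resulting relation $\Gamma=0$ involve only $x^{2}$, $z^{2}$ and $xz$. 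Restricted to ovoid points $(\alpha^{i(q+1)}:\cdots)$ and divided by $\alpha^{2i(q+1)}$, this becomes a single quadratic in $X=\alpha^{i(q^{2}-1)(q+1)}$. Since $i\mapsto\alpha^{i(q^{2}-1)(q+1)}$ is two-to-one on $\mathbb{Z}_{q^{2}+1}$ with kernel $\lbrace 0,\tfrac{q^{2}+1}{2}\rbrace$, every point of $O\cap P\cap Q$ has index in a set of the form $\lbrace 0,\tfrac{q^{2}+1}{2},j,j+\tfrac{q^{2}+1}{2}\rbrace$; because $\tfrac{q^{2}+1}{2}$ is odd, exactly two of these four indices are even, which gives $|O^{(e)}\cap P\cap Q|\le 2$ at once, and the bound $\le 3$ follows by ruling out all four candidates via \cref{centerBlock} and \cref{nodoubleCx} (the configuration $\lbrace 0,\tfrac{q^{2}+1}{2},j,j+\tfrac{q^{2}+1}{2}\rbrace$ in a block is exactly what \cref{foldB} forbids).

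The concrete gaps in your proposal are these. In the primary route you plan to ``use \cref{symCx} and \cref{centerBlock} on the $C_{v}$-side to force $-2i,-2j,-2k\in C_{v}$,'' but those lemmas require $-i\in C_{v}$ or $\tfrac{q^{2}+1}{2}\in C_{v}$ as hypotheses, and knowing only $0,2i,2j,2k\in C_{v}$ gives you no symmetric element to start the induction; the ``transport'' step you defer is not a technicality but the entire theorem, and it is exactly what the quadratic reduction supplies (namely, that the solution indices come in pairs differing by $\tfrac{q^{2}+1}{2}$ --- a fact invisible to the purely combinatorial conditions you list). In the backup route, showing the discriminant of the quadratic is a non-square would prove $|O\cap P\cap Q|\le 2$, which is strictly stronger than the claimed $\le 3$; the paper's careful phrasing (four candidates, at most three realizable) indicates the second root of the quadratic can be rational, so this branch is almost certainly aimed at a false statement, and in any case no computation is offered. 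Your parity remark for the $O^{(e)}$ bound gestures at the right phenomenon but lacks the pairing $\lbrace j,j+\tfrac{q^{2}+1}{2}\rbrace$ that makes it work. As written, the proposal is a correct setup plus a list of plausible tools, with the decisive step missing in both branches.
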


\begin{proof}
    First note that by raising both sides of the equality $b^2c^{q^2} = b^{2q^2}c$ to the power $q$, we have $b^{2q}c^{q^3} = b^{2q^3}c^q$. We use several times in the proof that any powers of $b = \alpha^u$ are invertible. By using $P$ and substituting $t = -b^{-q^3}(b x + b^{q} y + b^{q^2} z)$ in the equations of $O$ and $Q$, we obtain the equations of the two conics in the plane $P$, $O_c$ as described in \cref{ovoidconic}, and $Q_c$: 

   \begin{multline} \label{quadricconic}
        Q_c \hs \left (b^2c^{q^3} + b^{2q^3}c \right )x^2 + \left (b^{2q}c^{q^3} + b^{2q^3}c^{q} \right ) y^2 + \left (b^{2q^3}c^{q^2} + b^{2q^2}c^{q^3}\right ) z^2 \\ + \left (2b^{q^2 + q}c^{q^3} \right ) yz +  \left (2b^{q^2 + 1}c^{q^3} \right ) xz + \left (2b^{q + 1}c^{q^3} \right ) xy = 0, 
    \end{multline}

Note that $O_c = O \cap P$, and $Q_c = Q \cap P$. Using $O_c$, we substitute $y^2 = \frac{-b^{q^2} yz  - b^{q^3} xz - b xy}{b^q}$ in the equation of $Q_c$ to obtain using $b^{2q}c^{q^3} = b^{2q^3}c^q$ an equation $\Gamma$:

\begin{multline} \label{gamma}
       \Gamma = \left (b^2c^{q^3} + b^{2q^3}c \right )x^2 + \left (2b^{2q}c^{q^3} \right ) \left( \frac{-b^{q^2} yz  - b^{q^3} xz - b xy}{b^q} \right) + \left (b^{2q^3}c^{q^2} + b^{2q^2}c^{q^3}\right ) z^2 \\ + \left (2b^{q^2 + q}c^{q^3} \right ) yz +  \left (2b^{q^2 + 1}c^{q^3} \right ) xz + \left (2b^{q + 1}c^{q^3} \right ) xy. 
    \end{multline}

    Note that points in $O \cap P \cap Q$ satisfy $\Gamma = 0$. To simplify things, apply $\beta^{q^4} = \beta$ for all $\beta \in \mathbb{F}_{q^4}$ to get $\left (b^2c^{q^3} + b^{2q^3}c \right )^{q^3} = \left (b^{2q^3}c^{q^6} + b^{2q^6}c^{q^3} \right )= \left ( b^{2q^3}c^{q^2}+b^{2q^2}c^{q^3} \right )$. 
    After expanding \cref{gamma} and using the fact that $\left ( b^{2q^3}c^{q^2}+b^{2q^2}c^{q^3} \right ) = \left (b^2c^{q^3} + b^{2q^3}c \right )^{q^3}$ we have 

    \begin{equation} \label{gamma2}
        \Gamma = \left (b^2c^{q^3} + b^{2q^3}c \right )x^2 + \left (b^2c^{q^3} + b^{2q^3}c \right )^{q^3} z^2  +  \left (2c^{q^3}\left (b^{q^2 +1} - b^{q^3 + q} \right ) \right) xz, 
    \end{equation}

Points in $O \cap P \cap Q$ are of the form $(\alpha^{i(q+1)} : \alpha^{iq(q+1)} : \alpha^{iq^2(q+1)} : \alpha^{iq^3(q+1)})$. We want to determine how many different such points exist. Since points in $O \cap P \cap Q$ satisfy $\Gamma = 0$,

\begin{equation}
         \left (b^2c^{q^3} + b^{2q^3}c \right ) \left (\alpha^{2i(q+1)} \right ) + \left (b^2c^{q^3} + b^{2q^3}c\right )^{q^3} \left (\alpha^{2iq^2(q+1)} \right )  +  \left (2c^{q^3} \left (b^{q^2 +1} - b^{q^3 + q} \right ) \right ) \alpha^{i(q+1)} \alpha^{iq^2(q+1)} = 0.
    \end{equation}

    We divide the equation by $\alpha^{2i(q+1)}$, so 

    \begin{equation} \label{eqwithi}
         \left (b^2c^{q^3} + b^{2q^3}c \right ) + \left (b^2c^{q^3} + b^{2q^3}c \right )^{q^3} \left (\alpha^{2i(q^2 -1)(q+1)} \right )  +  \left (2c^{q^3} \left (b^{q^2 +1} - b^{q^3 + q}\right ) \right )  \alpha^{i(q^2 -1)(q+1)} = 0, 
    \end{equation}

     Thus $\alpha^{i(q^2 -1)(q+1)}$ must satisfy:  

     \begin{equation} \label{degree2eq}
          \left (b^2c^{q^3} + b^{2q^3}c \right )^{q^3} X^2  +  \left (2c^{q^3} \left (b^{q^2 +1} - b^{q^3 + q}\right ) \right )  X + \left (b^2c^{q^3} + b^{2q^3}c \right ) = 0, 
    \end{equation}
     which is an equation of degree 2, and has at most two roots. We claim that each root $\bar{x}$ of \cref{degree2eq} gives  two possible solutions $i, i + \frac{q^2 +1}{2} \in \mathbb{Z}_{q^2 +1}$ of \cref{eqwithi}. To see this, suppose $\bar{x}$ is a solution of \cref{degree2eq} such that there exists $i \neq j \in \mathbb{Z}_{q^2 +1}$ where $\bar{x} = \alpha^{i(q^2 -1)(q+1)} = \alpha^{j(q^2 -1)(q+1)}$. So, $\alpha^{(i -j)(q^2 -1)(q+1)} = 1$ which implies that $i - j = k(\frac{q^2 +1}{2})$ for some integer $k$. Since $i, j \in \mathbb{Z}_{q^2 +1}$, then $i - j = 0$ or $j = i + \frac{q^2 +1}{2}$. 
        Suppose \cref{degree2eq} has two solutions $\bar{x}$ and $\bar{y}$ that corresponds to $i$, $i + \frac{q^2 +1}{2}$, $j$, and $j + \frac{q^2 +1}{2}$. Note that since $\frac{q^2 +1}{2}$ is odd and exactly two of the four are even, then $| O^{(e)} \cap P \cap Q | \leq 2$. 
     It is clear that $i = 0$ gives a solution of \cref{degree2eq}, since $(1: 1: 1 : 1)$ is on $P$, $Q$, and $O$. Thus, $\alpha^{l(q^2 -1)(q+1)}$ for $l = 0, \frac{q^2 +1}{2}, j, j + \frac{q^2 +1}{2}$ are four possible solutions of \cref{eqwithi}. We claim that at most three of them can give solutions of \cref{eqwithi}. By contradiction, suppose \cref{eqwithi} has four solutions. So, $\Omega(L(\alpha^{l(q+1)})) \in P$ for $l = 0, \frac{q^2 +1}{2}, j, j + \frac{q^2 +1}{2}$. Then, there exists a block $B$ in $3$-$(q^2 +1, q+1, 1)$ design in \cref{3design} such that $\left \{ 0, \frac{q^2 +1}{2}, j , j + \frac{q^2 +1}{2} \right \} \subseteq B$. By \cref{centerBlock}, $ \left \{ -j , -j + \frac{q^2 +1}{2} \right \} \subseteq B$. Then $ \left \{ j, j + \frac{q^2 +1}{2}, 2j, 2j + \frac{q^2 +1}{2}, 0, \frac{q^2 +1}{2} \right \} \subseteq B + j$, where $B + j$ is a translate of $B$. Since $\left \{ 0, \frac{q^2 +1}{2}, j \right \} \subseteq B + j$, then by \cref{centerBlock}, $-j \in B + j$. Thus, $\left \{ 0, j, -j, 2j \right \} \subseteq B + j$, which is impossible by \cref{nodoubleCx}. Therefore, $| O \cap P \cap Q | \leq 3$. 
\end{proof}

\begin{thm} \label{max2psi}
    Let $O \hs xz-yt = 0 $, $P \hs b x + b^{q} y + b^{q^2} z + b^{q^3} t = 0$, and 
    \begin{center}
         $\Psi \hs \left ( a \sqrt{x} + a^{q} \sqrt{y} + a^{q^2} \sqrt{z} + a^{q^3} \sqrt{t} \right ) \left ( a \sqrt{x} - a^{q} \sqrt{y} + a^{q^2} \sqrt{z} - a^{q^3} \sqrt{t} \right ) = 0$, 
    \end{center}
    in $\PG(3,q^4)$, where $a = \alpha^s$ and $b = \alpha^u$ for some $s, u \in D \setminus \lbrace \frac{(q^2 +1)(q+1)}{2} \rbrace$. Let $O^{(e)} = \lbrace \Omega(L(\alpha^{2i(q+1)})) \in O : 0 \leq i < \frac{q^2 +1}{2} \rbrace$. If $a^2b^{q^2} = a^{2q^2}b$, then $ | O^{(e)} \cap P \cap \Psi | \leq 2$.
\end{thm}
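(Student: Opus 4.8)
The plan is to imitate the argument of \cref{max2}, together with the square‑root substitution of \cref{max4forroot}, so that after a passage to $\mathbb{F}_{q^2}$ the two linear factors of $\Psi$ collapse into a single quadratic equation. First, for $p=\Omega(L(\alpha^{2i(q+1)}))\in O^{(e)}$ with $0\le i<\frac{q^2+1}{2}$, set $p'=\Omega(L(\alpha^{i(q+1)}))=(\alpha^{i(q+1)}:\alpha^{iq(q+1)}:\alpha^{iq^2(q+1)}:\alpha^{iq^3(q+1)})\in O$; by \cref{equalsym} the coordinates of $p$ are the squares of those of $p'$, the only remaining ambiguity being the overall sign of $p'$, which is projectively irrelevant. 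Hence, writing $a=\alpha^s$ and $b=\alpha^u$, exactly as in \cref{max4forroot}, $p\in P$ is equivalent to $p'$ satisfying $bx^2+b^{q}y^2+b^{q^2}z^2+b^{q^3}t^2=0$, while $p\in\Psi$ is equivalent to $p'$ satisfying $ax+a^{q}y+a^{q^2}z+a^{q^3}t=0$ or $ax-a^{q}y+a^{q^2}z-a^{q^3}t=0$. Putting $x_0=\frac{(q^2+1)(q+1)}{2}$, $\eta=\alpha^{x_0}$ and $w=s+i(q+1)$, and using $(q-1)x_0=\frac{q^4-1}{2}$ (so $\eta^{q}=-\eta$, $\eta^{q^2}=\eta$, $\eta^{q^3}=-\eta$), these become: $Tr(\alpha^{u+2i(q+1)})=0$ for $p\in P$; and $Tr(\alpha^{w})=0$ or $Tr(\alpha^{x_0+w})=0$ for $p\in\Psi$ (the two factors of $\Psi$ evaluate at $p$ to $Tr(\alpha^{w})$ and $\eta^{-1}Tr(\alpha^{x_0+w})$).

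Next set $A=\alpha^{w}+\alpha^{q^2w}$ and $B=A^{q}=\alpha^{qw}+\alpha^{q^3w}$; since $A^{q^2}=A$, both lie in $\mathbb{F}_{q^2}$. Then $Tr(\alpha^{w})=A+B$ and $Tr(\alpha^{x_0+w})=\eta(A-B)$, so $p\in\Psi$ is equivalent to $A^2=B^2$, hence---as $(A^2)^{q}=B^2$---to $A^2\in\mathbb{F}_q$. The hypothesis $a^2b^{q^2}=a^{2q^2}b$ is equivalent to $u\equiv 2s\pmod{q^2+1}$, so $u+2i(q+1)=2w+k(q^2+1)$ for an integer $k$; expanding the trace and grouping the $q^0,q^2$ powers apart from the $q^1,q^3$ powers gives $Tr(\alpha^{u+2i(q+1)})=P_1D+P_1^{q}D^{q}$, where $P_1=\alpha^{k(q^2+1)}\in\mathbb{F}_{q^2}$ and $D=\alpha^{2w}+\alpha^{2q^2w}\in\mathbb{F}_{q^2}$ satisfies $D=A^2-2\nu$ with $\nu=\alpha^{(q^2+1)w}=\rho^{s}(\eta^{2})^{i}$, $\rho=\alpha^{q^2+1}\in\mathbb{F}_{q^2}$ and $\eta^{2}\in\mathbb{F}_q^{*}$.

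Now suppose $p\in O^{(e)}\cap P\cap\Psi$; then $A^2\in\mathbb{F}_q$, so $Tr(\alpha^{u+2i(q+1)})=A^2T_P-2(\eta^{2})^{i}T_0$, where $T_P=P_1+P_1^{q}\in\mathbb{F}_q$ and $T_0=P_1\rho^{s}+(P_1\rho^{s})^{q}\in\mathbb{F}_q$, and the incidence $p\in P$ reads $A^2T_P=2(\eta^{2})^{i}T_0$. If $T_P\neq 0$, substitute $A=\alpha^{w}(1+\xi^{s}X)=\alpha^{s}\zeta^{i}(1+\xi^{s}X)$, with $\zeta=\alpha^{q+1}$, $\xi=\alpha^{q^2-1}$ and $X=\alpha^{i(q^2-1)(q+1)}$, together with $(\eta^{2})^{i}=\zeta^{2i}X$; dividing by $\zeta^{2i}$ collapses this to $\alpha^{2s}(1+\xi^{s}X)^2=\tfrac{2T_0}{T_P}X$, a quadratic in $X$ whose leading coefficient $\alpha^{2s}\xi^{2s}$ and constant term $\alpha^{2s}$ are both nonzero. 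Hence $X$ takes at most two values; and since $\gcd(q^2+1,q+1)=2$, the map $i\mapsto\alpha^{i(q^2-1)(q+1)}$ is injective on $\{0,\dots,\tfrac{q^2+1}{2}-1\}$, so at most two indices $i$ occur and $|O^{(e)}\cap P\cap\Psi|\le 2$. If $T_P=0$, then $P_1^{q}=-P_1$, whence $T_0=P_1(\rho^{s}-\rho^{sq})$; the incidence then forces $T_0=0$, so $\rho^{s(q-1)}=1$, i.e.\ $(q+1)\mid s$---impossible for $s\in D\setminus\{x_0\}$ by \cref{traceprop}. Thus $O^{(e)}\cap P\cap\Psi=\varnothing$ in this case, and the bound holds throughout.

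The step I expect to be the main obstacle is exactly this collapse. After the square‑root substitution, $P$ becomes a quadric and $\Psi$ a reducible quadric, and the two conditions $p\in P$, $p\in\Psi$ are a priori two quadratic constraints on $p'$ that need not combine into a single low‑degree equation. The crux is to see that ``$p\in\Psi$'' is really the one scalar condition $A^2\in\mathbb{F}_q$, and that once it holds the $P$‑incidence is $\mathbb{F}_q$‑affine in the two quantities $A^2$ and $(\eta^{2})^{i}$, so that the substitution $A=\alpha^{s}\zeta^{i}(1+\xi^{s}X)$ brings the degree down to two. The only other delicate point is the degenerate branch $T_P=0$ (which turns out to mean $u\equiv 2s+x_0\pmod{(q^2+1)(q+1)}$), ruled out by the hypothesis $s\in D\setminus\{x_0\}$; everything else is routine trace and exponent arithmetic.
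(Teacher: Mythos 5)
Your proof is correct, and it reaches the paper's bound by a genuinely different middle route. The paper works directly in coordinates: it substitutes $t = -b^{-q^3}(bx+b^qy+b^{q^2}z)$ from $P$ into $\Psi$, uses $xz=yt$ and the hypothesis $a^2b^{q^2}=a^{2q^2}b$ to produce a single relation $\Gamma$ in $x$, $z$, $\sqrt{xz}$, and then plugs in the parametrization of $O^{(e)}$ to land on a quadratic in $X=\alpha^{i(q^2-1)(q+1)}$. You instead recast both incidences as trace conditions, observe that membership in $\Psi$ is exactly the scalar condition $A^2\in\mathbb{F}_q$ for $A=\alpha^{w}+\alpha^{q^2w}\in\mathbb{F}_{q^2}$, and use the hypothesis (in the form $u\equiv 2s\pmod{q^2+1}$) to make the $P$-incidence $\mathbb{F}_q$-linear in $A^2$ and $(\eta^2)^i$; after the substitution $A=\alpha^s\zeta^i(1+\xi^sX)$ you arrive at a quadratic in the same $X$, and both proofs finish identically with the injectivity of $i\mapsto\alpha^{i(q^2-1)(q+1)}$ on $\mathbb{Z}_{(q^2+1)/2}$, which follows from $\gcd(q^2+1,q+1)=2$. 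Your trace/subfield formulation buys two things: it makes transparent why the hypothesis $a^2b^{q^2}=a^{2q^2}b$ is the right collapsing condition, and it forces you to treat the degenerate branch $T_P=0$ explicitly (correctly ruled out via $(q+1)\nmid s$ for $s\in D\setminus\{\frac{(q^2+1)(q+1)}{2}\}$, by the same divisibility argument used in \cref{nsblock}). The paper's proof simply asserts that \cref{degree2eq2} ``is an equation of degree 2''; its leading coefficient is the $q^3$-power of its constant term, so the degenerate case there is when both vanish, leaving only the middle term, which has no admissible root since $s\neq\frac{(q^2+1)(q+1)}{2}$ --- your case split makes the analogous check explicit rather than implicit. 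The coordinate-elimination route is shorter on the page; yours is more self-auditing.
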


\begin{proof}
    By substituting $t = -b^{-q^3}(bx + b^qy + b^{q^2}z)$ in the equation of $\Psi$, using $xz = yt$, and $a^2b^{q^2} = a^{2q^2}b$, we have 

    \begin{equation}
        \Gamma = \left (  a^2b^{q^3} + a^{2q^3}b \right )x +  \left (  a^{2q^2}b^{q^3} + a^{2q^3}b^{q^2} \right )z + 2b^{q^3}\left ( a^{q^2 +1} - a^{q^3 + q} \right ) \sqrt{xz} = 0. 
    \end{equation}

    Points in $O^{(e)} \cap P \cap \Psi$ are the form $(\alpha^{2i(q+1)} : \alpha^{2iq(q+1)} : \alpha^{2iq^2(q+1)} : \alpha^{2iq^3(q+1)})$ for $0 \leq i < \frac{q^2 +1}{2}$. We want to determine how many different such points exist. Since $O^{(e)} \cap P \cap \Psi \hs \Gamma = 0$, we have

    \begin{equation}
        \left (  a^2b^{q^3} + a^{2q^3}b \right ) \alpha^{2i(q+1)} +  \left (  a^{2q^2}b^{q^3} + a^{2q^3}b^{q^2} \right )\alpha^{2iq^2(q+1)} + 2b^{q^3}\left ( a^{q^2 +1} - a^{q^3 + q} \right ) \alpha^{i(q^2 +1)(q+1)} = 0. 
    \end{equation}

    We divide the equation by $\alpha^{2i(q+1)}$, so 

    \begin{equation}
          \left (  a^2b^{q^3} + a^{2q^3}b \right ) +  \left (  a^{2q^2}b^{q^3} + a^{2q^3}b^{q^2} \right )\alpha^{2i(q^2 - 1)(q+1)} + 2b^{q^3}\left ( a^{q^2 +1} - a^{q^3 + q} \right ) \alpha^{i(q^2 -1)(q+1)} = 0. 
    \end{equation}

     Thus $\alpha^{i(q^2 -1)(q+1)}$ must satisfy:  

     \begin{equation} \label{degree2eq2}
          \left (  a^{2q^2}b^{q^3} + a^{2q^3}b^{q^2} \right )X^2 + 2b^{q^3}\left ( a^{q^2 +1} - a^{q^3 + q} \right ) X +  \left (  a^2b^{q^3} + a^{2q^3}b \right ) = 0. 
    \end{equation}

     which is an equation of degree 2, and has at most two roots. 
     We claim that each solution $\bar{x}$ of \cref{degree2eq2} determines a unique $i \in \mathbb{Z}_{\frac{q^2 +1}{2}}$. To see this, suppose $\bar{x}$ is a solution of \cref{degree2eq} such that there exists $i \neq j \in \mathbb{Z}_{\frac{q^2 +1}{2}}$ where $\bar{x} = \alpha^{i(q^2 -1)(q+1)} = \alpha^{j(q^2 -1)(q+1)}$. So, $\alpha^{(i -j)(q^2 -1)(q+1)} = 1$ which implies that $i - j = k(\frac{q^2 +1}{2})$ for some integer $k$. Since $i, j \in \mathbb{Z}_{\frac{q^2 +1}{2}}$, then $i - j = 0$, which implies that $i = j$. Therefore, $| O^{(e)} \cap P \cap \Psi | \leq 2$. 
\end{proof}

\begin{rem}\label{max2forroot}
 Let $b = \alpha^u$, $a = \alpha^{s}$ for some $u, s \in D \setminus \lbrace \frac{(q^2 +1)(q+1)}{2} \rbrace$. Let $O \hs xz-yt = 0$, $P \hs b x + b^{q} y + b^{q^2} z + b^{q^3} t = 0$, and $H \hs a \sqrt{x} + a^{q} \sqrt{y} + a^{q^2} \sqrt{z} + a^{q^3} \sqrt{t}  = 0 $ in $\PG(3,q^4)$.  Let $O^{(e)} = \lbrace \Omega(L(\alpha^{2i(q+1)})) \in O : 0 \leq i < \frac{q^2 +1}{2} \rbrace$. If $a^2b^{q^2} = a^{2q^2}b$, since $H \subseteq \Psi$, by \cref{max2psi}, then $| O^{(e)} \cap P \cap H | \leq 2$. 
\end{rem}

\begin{thm} \label{max3}
    Let $q$ be an odd prime power and $\alpha$ be a primitive element in $\mathbb{F}_{q^4}$. Let $O \hs xz-yt = 0 $, $P \hs \alpha^{u} x + \alpha^{uq} y + \alpha^{uq^2} z + \alpha^{uq^3} t = 0$, $Q \hs \alpha^{v} x^2 + \alpha^{vq} y^2 + \alpha^{vq^2} z^2 + \alpha^{vq^3} t^2 = 0$, and 
    \begin{center}
        $\Psi \hs \left ( \alpha^s \sqrt{x} + \alpha^{sq} \sqrt{y} + \alpha^{sq^2} \sqrt{z} + \alpha^{sq^3} \sqrt{t} \right ) \left ( \alpha^s \sqrt{x} - a\alpha^{sq} \sqrt{y} + \alpha^{sq^2} \sqrt{z} - \alpha^{sq^3} \sqrt{t} \right ) = 0$, 
    \end{center}
     in $\PG(3,q^4)$, where $s, u, v \in D \setminus \lbrace \frac{(q^2 +1)(q+1)}{2} \rbrace$. Let $O^{(e)} = \lbrace \Omega(L(\alpha^{2i(q+1)})) \in O : 0 \leq i < \frac{q^2 +1}{2} \rbrace$. Then $ | O^{(e)} \cap P \cap \Psi \cap Q | \leq 3$. 
\end{thm}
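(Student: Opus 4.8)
The plan is to split the argument according to the two degeneracy relations that power \cref{max2} and \cref{max2psi}, and to treat the remaining generic situation by lifting four hypothetical common points to ovoid points through square roots and then invoking the pencil-of-conics results together with the forbidden-configuration lemmas. Since $O^{(e)}\cap P\cap\Psi\cap Q$ is contained both in $O^{(e)}\cap P\cap Q$ and in $O^{(e)}\cap P\cap\Psi$, the first move is: if $b^2c^{q^2}=b^{2q^2}c$ with $b=\alpha^u$ and $c=\alpha^v$ (equivalently $v\equiv 2u\pmod{q^2+1}$), then \cref{max2} gives $|O^{(e)}\cap P\cap Q|\le 2$; and if $a^2b^{q^2}=a^{2q^2}b$ with $a=\alpha^s$ (equivalently $u\equiv 2s\pmod{q^2+1}$), then \cref{max2psi} gives $|O^{(e)}\cap P\cap\Psi|\le 2$. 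In either case the desired bound is clear, so assume henceforth that neither relation holds.

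Suppose for contradiction that $O^{(e)}\cap P\cap\Psi\cap Q$ has four points. Each of them lies on one of the two factors of $\Psi$; writing $H\hs\alpha^s\sqrt{x}+\alpha^{sq}\sqrt{y}+\alpha^{sq^2}\sqrt{z}+\alpha^{sq^3}\sqrt{t}=0$ and noting that the two factors of $\Psi$ cut out the same point set (flip the signs of $\sqrt{y}$ and $\sqrt{t}$), all four lie in $O^{(e)}\cap P\cap H$, and by \cref{max4forroot} and \cref{max4} each of the triple intersections $O^{(e)}\cap P\cap H$ and $O\cap P\cap Q$ has at most four points, hence equals our four points. Translating by a power of the Singer element (\cref{shift}) we may take one of the four points to be $\Omega(L(\alpha^0))=(1:1:1:1)$, so the four points are $p_l=\Omega(L(\alpha^{2a_l(q+1)}))$, $l=1,\dots,4$, with $a_1=0$; for each $p_l$ the choice of square root with which it sits on $H$ fixes an ovoid point $r_l=\Omega(L(\alpha^{a_l(q+1)}))$ with $a_l\in C_s$, and the $r_l$ are distinct with no three collinear. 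Evaluating the equations of $P$ (degree $1$), $Q$ (degree $2$), and $H$ ("degree $\tfrac12$") at $p_l$ and invoking \cref{bibd} translates these incidences into $\{0,a_2,a_3,a_4\}\subseteq C_s$, $\{0,2a_2,2a_3,2a_4\}\subseteq C_u$, $\{0,4a_2,4a_3,4a_4\}\subseteq C_v$. Feeding these three containments into \cref{tracelemma} with $\tfrac{m}{n}=1,2,4$ respectively shows that $r_1,\dots,r_4$ lie simultaneously on the plane $\Pi\hs\alpha^sx+\alpha^{sq}y+\alpha^{sq^2}z+\alpha^{sq^3}t=0$, the quadric $\widehat P\hs\alpha^ux^2+\alpha^{uq}y^2+\alpha^{uq^2}z^2+\alpha^{uq^3}t^2=0$, the quartic $\widehat Q\hs\alpha^vx^4+\alpha^{vq}y^4+\alpha^{vq^2}z^4+\alpha^{vq^3}t^4=0$, and of course on $O$.

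To conclude, restrict to the plane $\Pi$. Because $s$ and $u$ avoid $\tfrac{(q^2+1)(q+1)}{2}$, \cref{nsblock} makes $O\cap\Pi$ a nonsingular conic, and the argument of \cref{max4} applied to the triple $(\Pi,\widehat P,O)$ shows $O\cap\Pi\neq\widehat P\cap\Pi$; hence these two conics meet in exactly $r_1,\dots,r_4$, which by \cref{onepencil} are the base points of the pencil they span. Incorporating the incidence with $\widehat Q$ and applying \cref{shift}, \cref{symCx}, and \cref{centerBlock} to the circle relations $\{0,a_2,a_3,a_4\}\subseteq C_s$, $\{0,2a_2,2a_3,2a_4\}\subseteq C_u$, $\{0,4a_2,4a_3,4a_4\}\subseteq C_v$, one produces, inside one of $C_s,C_u,C_v$, a configuration $\{0,i,-i,2i\}$ that \cref{nodoubleCx} forbids (in the spirit of \cref{nodoublethree} and \cref{foldB}). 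This contradiction yields $|O^{(e)}\cap P\cap\Psi\cap Q|\le 3$. The crux, and the step I expect to be most delicate, is precisely this last reduction of the three "rescaled" circle incidences to one of the forbidden patterns: I anticipate it closely parallels the endgame of the proof of \cref{max2}, with the quartic $\widehat Q$ supplying the extra incidence needed to invoke \cref{nodoubleCx}.
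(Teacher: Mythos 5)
Your opening reduction is fine: discarding the case $a^2b^{q^2}=a^{2q^2}b$ via \cref{max2psi} is exactly the paper's first move (the case $b^2c^{q^2}=b^{2q^2}c$ is harmless but not actually needed), and your translation of four hypothetical points of $O^{(e)}\cap P\cap\Psi\cap Q$ into the three simultaneous circle incidences $\{0,a_2,a_3,a_4\}\subseteq C_s$, $\{0,2a_2,2a_3,2a_4\}\subseteq C_u$, $\{0,4a_2,4a_3,4a_4\}\subseteq C_v$ is correct, provided you note that each $a_l$ may need to be replaced by $a_l+\tfrac{q^2+1}{2}$ to select the factor of $\Psi$ that actually vanishes (this is harmless since $2a_l$ is unchanged modulo $q^2+1$). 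But observe what you have produced: this is precisely the statement of \cref{fullovoid}, which the paper \emph{deduces from} \cref{max3} via \cref{transfer}. You have run the transfer backwards and arrived at an equivalent reformulation of the theorem, not at something easier.

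The genuine gap is the endgame, which you explicitly leave as an expectation rather than an argument. You assert that the three circle incidences, together with \cref{shift}, \cref{symCx} and \cref{centerBlock}, will yield a forbidden $\{0,i,-i,2i\}$ configuration for \cref{nodoubleCx}; no derivation is given, and this step is the entire content of the theorem. There is no evidence such a purely combinatorial derivation exists: the paper's lemmas of that type (\cref{nodoublethree}, \cref{foldB}) only handle the degenerate situations where two of the three exponents coincide or where $\tfrac{q^2+1}{2}$ enters, and the paper itself resorts to algebra for the general case. Moreover, your geometric framing does not support the pencil machinery you invoke: after restricting to the plane $\Pi$ you have two conics ($O\cap\Pi$ and $\widehat P\cap\Pi$) plus a \emph{quartic} curve $\widehat Q\cap\Pi$, and \cref{onepencil} says nothing about a quartic lying in a pencil of conics. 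The paper avoids this by going the other way: it squares the equation of $\Psi$ and uses $xz=yt$ to produce a genuine quadric $\Phi$ with $O^{(e)}\cap\Psi\subseteq\Phi$, so that $O$, $Q$ and $\Phi$ all restrict to conics $O_c$, $Q_c$, $\Phi_c$ on the plane $P$; four common points would force $\Phi_c$ into the pencil of $O_c$ and $Q_c$ by \cref{onepencil}, and an explicit $3\times 3$ determinant computation — whose nonvanishing uses exactly $u\neq\tfrac{(q^2+1)(q+1)}{2}$ and $a^2b^{q^2}\neq a^{2q^2}b$ — rules this out. To complete your proof you would need either to carry out that determinant-style computation in your relabelled coordinates or to supply the missing combinatorial argument; as written, the crucial step is absent.
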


\begin{proof}
Let $a = \alpha^s$, $b = \alpha^u$, and $c = \alpha^v$. We can assume, without loss of generality that $|O^{(e)} \cap P \cap \Psi| > 2$, thus by \cref{max2psi}, $a^2b^{q^2} \neq a^{2q^2}b$, otherwise we would be done. 

Suppose $p = (x:y:z:t) \in O^{(e)} \cap \Psi $. We have

\begin{equation}
\begin{split}
    \left (a\sqrt{x} + a^{q}\sqrt{y} + a^{q^2}\sqrt{z} + a^{q^3} \sqrt{t} \right )\left (a\sqrt{x} - a^{q}\sqrt{y} + a^{q^2}\sqrt{z} - a^{q^3} \sqrt{t} \right ) & = 0,  \\
     a^2x + a^{2q^2}z - a^{2q}y - a^{2q^3}t  & =  2a^{q^3 + q}\sqrt{yt} - 2a^{q^2 +1}\sqrt{xz}, \\
     \left (a^2x + a^{2q^2}z - a^{2q}y - a^{2q^3}t \right )^2  & =  \left (2a^{q^3 + q}\sqrt{yt} - 2a^{q^2 +1}\sqrt{xz} \right )^2, \\
    \end{split}
\end{equation}
\begin{multline} \label{eq:QO1111}
    a^{4}x^2 + a^{4q}y^2 + a^{4q^2}z^2 + a^{4q^3} t^2 \\  -2a^2a^{2q}xy + 2a^2a^{2q^2}xz - 2a^2a^{2q^3}xt - 2a^{2q}a^{2q^2}yz + 2a^{2q}a^{2q^3}yt - 2a^{2q^2}a^{2q^3}zt  = \\
    4a^2a^{2q^2} xz + 4a^{2q}a^{2q^3} yt - 8aa^{q}a^{q^2}a^{q^3}\sqrt{xyzt}.
\end{multline}

Since $xz = yt$, then  $p \in \Phi$, where  
 \begin{multline} \label{eq:phia}
        \Phi \hs  a^{4}x^2 + a^{4q}y^2 + a^{4q^2}z^2 + a^{4q^3} t^2 \\ - 2a^2a^{2q}xy - 2a^2a^{2q^2}xz - 2a^2a^{2q^3}xt - 2a^{2q}a^{2q^2}yz - 2a^{2q}a^{2q^3}yt - 2a^{2q^2}a^{2q^3}zt  \\ + 8aa^{q}a^{q^2}a^{q^3}xz = 0.
    \end{multline}

 This shows that $O^{(e)} \cap P \cap Q \cap \Psi \subseteq O^{(e)} \cap P \cap Q \cap \Phi$. Next, we will show that $|P \cap O^{(e)} \cap \Phi \cap Q| \leq 3$, which implies $|P \cap O^{(e)} \cap \Psi \cap Q| \leq 3$. 
    Assume that the intersection of $P$, $Q$, $\Phi$, and $O^{(e)}$ has at least four points.
    Using the equation of $P$, we substitute $t = -b^{-q^3}(b x + b^{q} y + b^{q^2} z)$ in the equation of $O$, $Q$, and $\Phi$, and obtain the equation of three conics in the plane $P$, namely $O_c$, $Q_c$ as described in \cref{ovoidconic} and \cref{quadricconic}, and $\Phi_c$: 

\begin{multline} \label{phiconic}
    \Phi_c \hs \left (a^2b^{q^3} + a^{2q^3}b \right )^2 x^2 + \left (a^{2q}b^{q^3} + a^{2q^3}b^q \right )^2 y^2 + \left (a^{2q^2}b^{q^3} + a^{2q^3}b^{q^2} \right )^2 z^2 \\ 
    + \left (2 \left (a^{2q}b^{q^2} + a^{2q^2}b^q \right )a^{2q^3}b^{q^3} - 2a^{2q^2 + 2q}b^{2q^3} + 2a^{4q^3}b^{q^2 + q} \right )yz \\ 
     + \left (2 \left (a^2b^{q^2} + a^{2q^2}b \right )a^{2q^3}b^{q^3} + 2 \left (4a^{q^3 + q^2 + q + 1} - a^{2q^2 + 2} \right )b^{2q^3} + 2a^{4q^3}b^{q^2 + 1} \right ) xz \\ 
     + \left (2 \left (a^2b^q + a^{2q}b \right )a^{2q^3}b^{q^3} - 2a^{2q + 2}b^{2q^3} + 2a^{4q^3}b^{q + 1} \right ) xy = 0. 
\end{multline}

Since we assume there are four points in $O^{(e)} \cap Q \cap \Phi \cap P$, then by \cref{onepencil}, $\Phi_c$ should be in the pencil of $O_c$ and $Q_c$. Consider the $3 \times 6$ matrix $M'$ where each row corresponds to the coefficients of $x^2$, $y^2$, $z^2$, $yz$, $xz$, $xy$ for  $O_c$, $Q_c$, and $\Phi_c$, respectively: 

\begin{equation}
    M' = \begin{bmatrix}
      0 &  b^{q}& 0 & b^{q^2} & b^{q^3} & b \\ 
(b^2c^{q^3} + b^{2q^3}c)& (b^{2q}c^{q^3} + b^{2q^3}c^{q}) & (b^{2q^2}c^{q^3} + b^{2q^3}c^{q^2}) & (2b^{q^2 + q}c^{q^3}) &  (2b^{q^2 + 1}c^{q^3}) & (2b^{q + 1}c^{q^3}) \\

      (a^2b^{q^3} + a^{2q^3}b)^2 & (a^{2q}b^{q^3} + a^{2q^3}b^q)^2 & (a^{2q^2}b^{q^3} + a^{2q^3}b^{q^2})^2 & A & B & C \\
    \end{bmatrix},
\end{equation}
    where $A$, $B$, and $C$ are the coefficient of $yz$, $xz$, and $xy$ in $\Phi_c$, respectively: 

    \begin{equation} \label{eqA}
        A = \left (2 \left (a^{2q}b^{q^2} + a^{2q^2}b^q \right )a^{2q^3}b^{q^3} - 2a^{2q^2 + 2q}b^{2q^3} + 2a^{4q^3}b^{q^2 + q} \right ),
    \end{equation}

    \begin{equation}
        B = \left (2 \left (a^2b^{q^2} + a^{2q^2}b \right )a^{2q^3}b^{q^3} + 2 \left (4a^{q^3 + q^2 + q + 1} - a^{2q^2 + 2} \right )b^{2q^3} + 2a^{4q^3}b^{q^2 + 1} \right ),
    \end{equation}

    \begin{equation} \label{eqC}
        C = \left (2 \left (a^2b^q + a^{2q}b \right )a^{2q^3}b^{q^3} - 2a^{2q + 2}b^{2q^3} + 2a^{4q^3}b^{q + 1} \right ).
    \end{equation}
    
    We consider the last three columns of the above matrix and call it $M$. We calculate $\det(M)$ and show that $\det(M) \neq 0$.

    \begin{equation} \label{eq:detr}
       \det(M) = 2bb^{q+1}c^{q^3} \begin{vmatrix}
            b^{q^2-1} & b^{q^3-1} & 1 \\ 
            b^{q^2 -1} &  b^{q^2 -q} & 1 \\
            A & B & C \\
        \end{vmatrix} = 2bb^{q+1}c^{q^3} \begin{vmatrix}
            b^{q^2-1} & b^{q^3-1} & 1 \\ 
            0 &  b^{q^2 -q} - b^{q^3-1} & 0 \\
            A & B & C \\
        \end{vmatrix}.
    \end{equation}

    Note that $b^{q^2 -q} - b^{q^3-1}$ cannot be equal to zero: if $b^{q^2 -q} = b^{q^3-1}$, then $b^{(q-1)(q^2+1)} = 1$ and we have $q^4 -1 \ | \ u(q^2 +1)(q-1)$ which leads to $q+1 \ | \ u$. Hence $u = k(q+1)$, for some integer $k$. Since $u \in D$, then the only element in $D$ which is multiple of $q+1$ is $\frac{(q^2 + 1)(q+1)}{2}$. Hence, $u = \frac{(q^2 + 1)(q+1)}{2}$ which contradicts the choice of $u$ in the assumption. So, we have

    \begin{equation}
         \det(M) = 2bb^{q+1}c^{q^3} \left (b^{q^2 -q} - b^{q^3-1} \right ) \begin{vmatrix}
            b^{q^2-1} & 0 & 1 \\ 
            0 &  1 & 0 \\
            A & 0 & C \\
        \end{vmatrix} =  -2bc^{q^3} \left (b^{q^3 + q} - b^{q^2 +1} \right )\begin{vmatrix}
            b^{q^2-1}  & 1 \\ 
            A  & C \\
        \end{vmatrix}.
    \end{equation}

Since $b$, $c$ are powers of $\alpha$, and $-b^{q+1}\left (b^{q^2 -q} - b^{q^3-1} \right ) =  \left (b^{q^3 + q} - b^{q^2 +1} \right ) \neq 0$, then $\det(M) \neq 0$ if and only if $b \left (b^{q^2 -1}C - A \right ) \neq 0$. 
    For the final step, we calculate $b \left (b^{q^2 -1}C - A \right )$:

    \begin{multline}
     b \left (b^{q^2 -1}C - A \right ) = 2 ( a^{2q^3}b^{q^3}b^qa^2b^{q^2} + a^{2q}a^{2q^3}bb^{q^3}b^{q^2} - a^{2q}b^{2q^3}a^2b^{q^2} + a^{4q^3}b^{q^2}b^{q}b \\ -
        a^{2q}a^{2q^3}bb^{q^3}b^{q^2} - a^{2q^3}b^{q^3}b^{q}a^{2q^2}b + a^{2q}b^{2q^3}a^{2q^2}b - 
        a^{4q^3}b^{q^2}b^{q}b ), 
    \end{multline}
    \begin{equation}
       = 2 \left ( a^{2q^3}b^{q^3}b^{q} \left (a^2b^{q^2} - a^{2q^2}b \right ) - a^{2q}b^{2q^3} \left (a^2b^{q^2} - a^{2q^2}b \right ) \right ), 
    \end{equation}
    \begin{equation} \label{blast}
       = - 2b^{q^3} \left (a^2b^{q^2} - a^{2q^2}b \right ) \left ( a^{2q}b^{q^3} - a^{2q^3}b^q\right ).
    \end{equation}
    \begin{equation}
      = -2b^{q^3} \left (a^2b^{q^2} - a^{2q^2}b \right )^{q+1}
    \end{equation}

Note that in \cref{blast}, we have used the fact that $\left ( a^{2q}b^{q^3} - a^{2q^3}b^q\right ) = \left (a^2b^{q^2} - a^{2q^2}b \right )^q$. 
Hence, 

 \begin{equation}
        \det(M) = 4b^{q^3}c^{q^3} \left (b^{q^3 + q} - b^{q^2 +1} \right ) \left (a^2b^{q^2} - a^{2q^2}b \right )^{q+1}. 
    \end{equation}
    
    By \cref{max2psi}, and using that $|O^{(e)} \cap P \cap \Psi| > 2$, we know $\left (a^2b^{q^2} - a^{2q^2}b \right ) \neq 0$, and since $4b^{q^3}c^{q^3} \neq 0$, we have $\det(M) \neq 0$. 
    Therefore, the rank of $M$ and $M'$ is equal to $3$. This implies $\Phi_c$ is not in the pencil of $O_c$ and $Q_c$, and we reach the desired contradiction. 
\end{proof}

\begin{cor} \label{max3H}
    Let $O \hs xz-yt = 0 $, $P \hs \alpha^{u} x + \alpha^{uq} y + \alpha^{uq^2} z + \alpha^{uq^3} t = 0$, $Q \hs \alpha^{v} x^2 + \alpha^{vq} y^2 + \alpha^{vq^2} z^2 + \alpha^{vq^3} t^2 = 0$, and $H \hs \alpha^{s} \sqrt{x} + \alpha^{sq} \sqrt{y} + \alpha^{sq^2} \sqrt{z} + \alpha^{sq^3} \sqrt{t}  = 0$ in $\PG(3,q^4)$, where $s, u, v \in D \setminus \lbrace \frac{(q^2 +1)(q+1)}{2} \rbrace$. Let $O^{(e)} = \lbrace \Omega(L(\alpha^{2i(q+1)})) \in O : 0 \leq i < \frac{q^2 +1}{2} \rbrace$. Then $ | O^{(e)} \cap P \cap H \cap Q | \leq 3$. 
\end{cor}

\begin{proof}
    Since $H \subseteq \Psi$, then the result is immediate by \cref{max3}. 
\end{proof}
As a direct consequence of \cref{max3H}, we have the following central result of this paper.  

\begin{thm} \label{orthogovoideven}
	Let $q$ be an odd prime power. Let  $M_{1/2} = (\mathcal{M}^{(e)}, \lbrace 2C \cap \mathcal{M}^{(e)}: C \in \mathcal{C} \rbrace)$, $M_1 = (\mathcal{M}^{(e)}, \lbrace C \cap \mathcal{M}^{(e)}: C \in \mathcal{C} \rbrace)$, and $M_2 = (\mathcal{M}^{(e)}, \lbrace (C \cap \mathcal{M}^{(e)})/2: C \in \mathcal{C} \rbrace)$ be the three truncated M{\"o}bius planes in \cref{truncated}. Then, $M_{1/2}$, $M_1$, and $M_2$ give an $\ACTM(3,q)$. 
\end{thm}
\begin{proof}
	Let $B_{1/2}$, $B_1$, and $B_2$ be circles of $M_{1/2}$, $M_{1}$, and $M_{2}$, respectively. Let $I = B_{1/2} \cap B_1 \cap B_2$. We must prove $|I| \leq 3$. Let $d \in I$ and $I_0 = \{ i -d \ : \ i \in I  \}$; note that $0 \in I_0$. Assume without loss of generality that $|I_0| > 1$. By \cref{shift}, for $i \in B_{l}$, $l = 1/2, 1, 2$, there exists $x \in D \setminus \{ \frac{(q^2 +1)(q+1)}{2} \}$ such that $li \in C_x$ and $i \in C_{x, l}$. Therefore, there exists $s, u, v \in D \setminus \{ \frac{(q^2 +1)(q+1)}{2} \}$ such that $I_0 \subseteq C_{s,1/2} \cap C_{u,1} \cap C_{v,2}$. Let $I_L = \{ \Omega(L(\alpha^{i(q+1))})) : i \in I_0 \}$.  By \cref{tracelemma}, 
$I_L \subseteq O^{(e)} \cap H \cap P \cap Q$, where $H \hs a\sqrt{x} + a^q \sqrt{y} + a^{q^2} \sqrt{z} + a^{q^3} \sqrt{t} = 0$, $P \hs bx + b^q y + b^{q^2} z + b^{q^3}t = 0$, $Q \hs cx^2 + c^q y^2 + c^{q^2} z^2 + c^{q^3}t^2 = 0$, and $a = \alpha^s$, $b = \alpha^u$, and $c = \alpha^v$. 
\cref{max3H} implies $|I_L| \leq 3$, therefore $|I| = |B_{1/2} \cap B_1 \cap B_2| \leq 3$. 
\end{proof}

\begin{lem} \label{transfer}
    Let $P_a \hs ax + a^q y + a^{q^2} z + a^{q^3}t = 0$, $Q_b \hs bx^2 + b^q y^2 + b^{q^2} z^2 + b^{q^3}t^2 = 0$, $R_c \hs cx^4 + c^q y^4 + c^{q^2} z^4 + c^{q^3}t^4 = 0$. Let  
        $\Psi_a \hs \left ( a\sqrt{x} + a^q \sqrt{y} + a^{q^2} \sqrt{z} + a^{q^3} \sqrt{t}          \right ) \left ( a\sqrt{x} - a^q \sqrt{y} + a^{q^2} \sqrt{z} - a^{q^3} \sqrt{t}          \right ) = 0$, $P_b \hs bx + b^q y + b^{q^2} z + b^{q^3}t = 0$, and $Q_c \hs cx^2 + c^q y^2 + c^{q^2} z^2 + c^{q^3}t^2 = 0$, 
    where $a = \alpha^{s}$, $b = \alpha^{u}$, and $c = \alpha^{v}$ for some  $s, u, v \in D \setminus \lbrace \frac{(q^2 +1)(q+1)}{2} \rbrace$. 
    Let $p' = \Omega(L(\alpha^{(l + p_l\frac{q^2 +1}{2})(q+1)})) \rbrace$,
     where $p_l \in \lbrace 0, 1 \rbrace$, and $p = \Omega(L(\alpha^{2l(q+1)}))$. Then the following hold: 

     \begin{enumerate}

         \item if $p' \in P_a$ then $p \in \Psi_a$; \label{it:1}

         \item if   $p' \in Q_b$ then $p \in P_b$; \label{it:2}

         \item  if $p' \in R_c$ then $p \in Q_c$. \label{it:3}
    \end{enumerate}
\end{lem}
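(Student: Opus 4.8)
The plan is to reduce all three claims to a single elementary fact: in $PG(3,q^4)$ the point $p$ is the coordinatewise square of $p'$. Put $m' = l + p_l\frac{q^2+1}{2}$, so that, using $\Omega(L(\alpha^j)) = (\alpha^j : \alpha^{jq} : \alpha^{jq^2} : \alpha^{jq^3})$ from the proof of \cref{tracelemma}, we may write $p' = (x_0 : x_1 : x_2 : x_3)$ with $x_j = \alpha^{m'(q+1)q^j}$. Since $2m' = 2l + p_l(q^2+1)$, squaring each coordinate gives $x_j^2 = \alpha^{2l(q+1)q^j}\cdot\big(\alpha^{(q^2+1)(q+1)}\big)^{p_l q^j}$. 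By \cref{subprim} the element $\beta := \alpha^{(q^2+1)(q+1)}$ lies in $\mathbb{F}_q$, hence $\beta^{q^j} = \beta$, so $(x_0^2 : x_1^2 : x_2^2 : x_3^2) = \beta^{p_l}(\alpha^{2l(q+1)} : \alpha^{2l(q+1)q} : \alpha^{2l(q+1)q^2} : \alpha^{2l(q+1)q^3}) = p$ as points of $PG(3,q^4)$. Thus $(x_0^2 : x_1^2 : x_2^2 : x_3^2)$ is a valid representative of $p$, and $(x_0, x_1, x_2, x_3)$ is then a compatible choice of square roots of its coordinates.

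Given this, the second and third claims are pure substitution. If $p' \in Q_b$ then $b x_0^2 + b^q x_1^2 + b^{q^2} x_2^2 + b^{q^3} x_3^2 = 0$, which is exactly the equation of $P_b$ evaluated at the representative $(x_0^2 : x_1^2 : x_2^2 : x_3^2)$ of $p$; hence $p \in P_b$. Similarly, if $p' \in R_c$ then $c x_0^4 + c^q x_1^4 + c^{q^2} x_2^4 + c^{q^3} x_3^4 = 0$, i.e. $c(x_0^2)^2 + c^q(x_1^2)^2 + c^{q^2}(x_2^2)^2 + c^{q^3}(x_3^2)^2 = 0$, which is the equation of $Q_c$ at $p$; hence $p \in Q_c$.

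For the first claim, evaluate $\Psi_a$ at $p$ using the square-root representatives $\sqrt{x} = x_0$, $\sqrt{y} = x_1$, $\sqrt{z} = x_2$, $\sqrt{t} = x_3$. Then $\Psi_a$ factors as $\big(a x_0 + a^q x_1 + a^{q^2} x_2 + a^{q^3} x_3\big)\big(a x_0 - a^q x_1 + a^{q^2} x_2 - a^{q^3} x_3\big)$, and its first factor is precisely the defining equation of $P_a$ at $p' = (x_0 : x_1 : x_2 : x_3)$, which vanishes by hypothesis. Hence the product vanishes and $p$ lies on $\Psi_a$.

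The only point needing real care is conventional rather than computational: one must be explicit about the meaning of membership of $p$ in $\Psi_a$, since the coordinates of $p$ have two ovoid square roots, $\Omega(L(\alpha^{l(q+1)}))$ and $\Omega(L(\alpha^{(l+\frac{q^2+1}{2})(q+1)}))$, corresponding to the two linear factors of $\Psi_a$. The argument simply uses the square roots coming from $p'$ itself --- one of these two ovoid points, selected by the value of $p_l$ --- so that the matching factor of $\Psi_a$ vanishes. Once this is fixed, no nontrivial computation remains: everything follows from the squaring identity $p = (x_0^2 : x_1^2 : x_2^2 : x_3^2)$ together with the fact that substituting $x\mapsto x^2$ (and likewise in $y,z,t$) into the defining polynomial turns $R_c$ into $Q_c$, $Q_b$ into $P_b$, and $P_a$ into one linear factor of $\Psi_a$.
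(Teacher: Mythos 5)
Your proof is correct and follows essentially the same route as the paper's: both rest on the explicit coordinates $\Omega(L(\alpha^j)) = (\alpha^j:\alpha^{jq}:\alpha^{jq^2}:\alpha^{jq^3})$, with your squaring identity $p=(x_0^2:x_1^2:x_2^2:x_3^2)$ being the same computation the paper phrases as a sign-flip of the coordinates of $\Omega(L(\alpha^{l(q+1)}))$. Your version is marginally cleaner in that taking the square roots directly from $p'$ lets one factor of $\Psi_a$ equal $P_a(p')$ verbatim, avoiding the paper's case split on $p_l$, and it spells out items (2) and (3), which the paper dismisses as clear.
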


\begin{proof}

We first prove~\cref{it:1}. Note that for any $0 \leq l < \frac{q^2 +1}{2}$, we have $\Omega(L(\alpha^{(l + \frac{q^2 +1}{2})(q+1)})) = (\alpha^{l(q+1)}: -\alpha^{lq(q+1)} : \alpha^{lq^2(q+1)} : -\alpha^{lq^3(q+1)})$. So, $p' \in P_a$ implies $a \alpha^{l(q+1)} + a^q \alpha^{lq(q+1)} + a^{q^2} \alpha^{lq^2(q+1)} + a^{q^3} \alpha^{lq^3(q+1)} = 0$ if $p_l = 0$, or $a \alpha^{l(q+1)} - a^q \alpha^{lq(q+1)} + a^{q^2} \alpha^{lq^2(q+1)} - a^{q^3} \alpha^{lq^3(q+1)} = 0$ if $p_l = 1$. In either case, $p \in \Psi_a$. \cref{it:2} and \cref{it:3} are clear using the corresponding equations.
\end{proof}

\begin{thm} \label{fullovoid}
    Let $q$ be an odd prime power and $\alpha$ be a primitive element in $\mathbb{F}_{q^4}$. Let $O \hs xz-yt = 0 $, Let $P_a \hs ax + a^q y + a^{q^2} z + a^{q^3}t = 0$, $Q_b \hs bx^2 + b^q y^2 + b^{q^2} z^2 + b^{q^3}t^2 = 0$, $R_c \hs cx^4 + c^q y^4 + c^{q^2} z^4 + c^{q^3}t^4 = 0$ in $\PG(3,q^4)$, where $a = \alpha^{s}$, $b = \alpha^{u}$, and $c = \alpha^{v}$ for some  $s, u, v \in D \setminus \lbrace \frac{(q^2 +1)(q+1)}{2} \rbrace$.  Let $\Vec{p} = (p_0, p_1, \ldots, p_i, \ldots, p_{\frac{q^2 -1}{2}}) \in \lbrace 0, 1 \rbrace^{\frac{q^2 +1}{2}}$, and $O^{(\Vec{p})} = \lbrace \Omega(L(\alpha^{(i + p_i\frac{q^2 +1}{2})(q+1)})) \in O : 0 \leq i < \frac{q^2 +1}{2} \rbrace$. Then $ | O^{(\Vec{p})} \cap P_a \cap Q_b \cap R_c | \leq 3$. 
\end{thm}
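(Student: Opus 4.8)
The plan is to argue by contradiction, reducing everything to \cref{max3} via the ``doubling'' transfer supplied by \cref{transfer}. Suppose $O^{(\Vec{p})} \cap P_a \cap Q_b \cap R_c$ contained four distinct points. By the definition of $O^{(\Vec{p})}$, each of them has the form $p'_\ell = \Omega(L(\alpha^{(i_\ell + p_{i_\ell}\frac{q^2+1}{2})(q+1)}))$ for some index $i_\ell$ with $0 \le i_\ell < \frac{q^2+1}{2}$; since $O^{(\Vec{p})}$ contains exactly one point for each value of the index $i$, four distinct points force the indices $i_1, i_2, i_3, i_4$ to be pairwise distinct.

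Next I would pass to a companion family of points on the ovoid. For each $\ell$ set $p_\ell = \Omega(L(\alpha^{2 i_\ell(q+1)}))$; this is a point of the ovoid $O$ (after the transformation $\Omega$, $O \hs xz - yt = 0$), and since $0 \le i_\ell < \frac{q^2+1}{2}$ it lies in $O^{(e)} = \{\Omega(L(\alpha^{2i(q+1)})) : 0 \le i < \frac{q^2+1}{2}\}$. Because the exponents of ovoid points are taken modulo $q^2+1$ and $q$ is odd (so $q^2+1 = 2\cdot\frac{q^2+1}{2}$), the doubling map $i \mapsto 2i \bmod (q^2+1)$ is injective on $\{0,\dots,\frac{q^2+1}{2}-1\}$, and hence $p_1,\dots,p_4$ are pairwise distinct. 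Now apply the three implications of \cref{transfer} with $l = i_\ell$ and $p_l = p_{i_\ell}$: from $p'_\ell \in P_a$ we obtain $p_\ell \in \Psi_a$; from $p'_\ell \in Q_b$ we obtain $p_\ell \in P_b$; and from $p'_\ell \in R_c$ we obtain $p_\ell \in Q_c$. Therefore $p_1,\dots,p_4$ are four distinct points of $O^{(e)} \cap P_b \cap \Psi_a \cap Q_c$.

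Finally I would invoke \cref{max3}. Writing $a = \alpha^s$, $b = \alpha^u$, $c = \alpha^v$ with $s,u,v \in D \setminus \{\frac{(q^2+1)(q+1)}{2}\}$, the surfaces $P_b$, $\Psi_a$, $Q_c$ are precisely the surfaces $P$, $\Psi$, $Q$ of \cref{max3}, with parameter $u$ in the linear surface, $s$ in the pair-of-planes $\Psi$, and $v$ in the quadric, all lying in the permitted set $D \setminus \{\frac{(q^2+1)(q+1)}{2}\}$. Hence \cref{max3} gives $|O^{(e)} \cap P_b \cap \Psi_a \cap Q_c| \le 3$, contradicting the existence of the four distinct points $p_1,\dots,p_4$. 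This proves $|O^{(\Vec{p})} \cap P_a \cap Q_b \cap R_c| \le 3$.

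I expect the only care needed is bookkeeping: correctly matching each exponent $s,u,v$ to the surface it indexes under \cref{transfer} (the quartic $R_c$ collapses to the quadric $Q_c$, the quadric $Q_b$ collapses to the plane $P_b$, and the plane $P_a$ collapses to the pair-of-planes $\Psi_a$), and verifying the injectivity of the doubling map so the four transferred points stay distinct. All of the genuinely geometric content — the pencil-of-conics argument showing $\Phi_c$ is not in the pencil of $O_c$ and $Q_c$, together with the determinant computation that the relevant $3\times 3$ minor is nonzero — is already packaged inside \cref{max3} (which in turn uses \cref{max2psi}), so no new curve-theoretic work is required here; the statement is essentially a corollary obtained by a single application of \cref{transfer}.
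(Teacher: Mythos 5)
Your proposal is correct and follows essentially the same route as the paper: assume four points, pass to the doubled points in $O^{(e)}$ via \cref{transfer} (plane $\mapsto$ $\Psi_a$, quadric $\mapsto$ plane, quartic $\mapsto$ quadric), and contradict \cref{max3}. Your explicit check that the doubling map $i \mapsto 2i \bmod (q^2+1)$ is injective on $\{0,\dots,\frac{q^2+1}{2}-1\}$, so the four transferred points remain distinct, is a detail the paper asserts without justification, and is a worthwhile addition.
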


\begin{proof}
    By contradiction, assume $ | O^{(\Vec{p})} \cap P_a \cap Q_b \cap R_c |  \geq 4$. Then, there exists 

\begin{equation*}
    F = \left  \lbrace \Omega(L(\alpha^{(l + p_l\frac{q^2 +1}{2})(q+1)})) : l \in \lbrace 0, i, j, k  \rbrace \right \rbrace \subseteq O^{(\Vec{p})} \cap P_a \cap Q_b \cap R_c,
\end{equation*}

such that $l = 0, i, j, k$ $(\text{mod }  \frac{q^2 +1}{2})$ are distinct. 
Then,
\begin{equation*}
    F^{(2)} = \left  \lbrace \Omega(L(\alpha^{2l(q+1)})) : l \in \lbrace 0, i, j, k  \rbrace \right \rbrace 
\end{equation*}
has four distinct points. 
Then, by \cref{transfer}, $F^{(2)} \subseteq O^{(e)} \cap P_b \cap Q_c \cap \Psi_a$, where 
\begin{center}
    $\Psi_a \hs \left ( a\sqrt{x} + a^q \sqrt{y} + a^{q^2} \sqrt{z} + a^{q^3} \sqrt{t}          \right ) \left ( a\sqrt{x} - a^q \sqrt{y} + a^{q^2} \sqrt{z} - a^{q^3} \sqrt{t}          \right ) = 0$, 
\end{center}
$P_b \hs bx + b^q y + b^{q^2} z + b^{q^3}t = 0$, and $Q_c \hs cx^2 + c^q y^2 + c^{q^2} z^2 + c^{q^3}t^2 = 0$. By \cref{max3}, $|O^{(e)} \cap P_b \cap Q_c \cap \Psi_a| \leq 3$, which is a contradiction with $|F^{(2)}| = 4$.  
\end{proof}

\begin{rem} \label{pshift}
Let $f$ be the natural bijection between $\left \{ 0, 1 \right \}^{\frac{q^2 +1}{2}}$ and the set 
$S = \{ B \subseteq \mathbb{Z}_{q^2 +1} : |B| = \frac{q^2 +1}{2} \text{ and for all } 0 \leq i < \frac{q^2 +1}{2}, \text{either }i \in B \text{ or } i + \frac{q^2 +1}{2} \in B \}$; more specifically, let  $f(\Vec{p}) = \{ i + p_i(\frac{q^2 +1}{2}) : 0 \leq i < \frac{q^2 +1}{2} \}$. The set $S$ is closed under cyclic shifts, that is, if $B \in S$ then $B + l \in S$, since the fact that any $i, j \in B$ has $i-j \neq \frac{q^2 +1}{2}$ implies the same fact for $B + l$. Therefore, if $B = f(\Vec{p})$, then there exists a $\vec{p'}$ such that $B + l = f(\Vec{p'})$. 
\end{rem}
\begin{nota}
    Let $\vec{p} = (p_0, p_1, \ldots, p_{\frac{q^2 -1}{2}}) \in \left \{ 0, 1 \right \}^{\frac{q^2 +1}{2}}$. We denote the $ 4 \times \frac{q^2 +1}{2}$ submatrix of $G^{l(q+1)}_{q^2 +1}$ with column indices $\{ i + p_{i}(\frac{q^2 +1}{2}) : 0 \leq i < \frac{q^2 + 1}{2} \}$ by $\left [ G^{l(q+1)}_{q^2 +1} \right ]_{\Vec{p}}$. 
\end{nota}

\begin{con} \label{mopvec}
    Let $\vec{p} = (p_0, p_1, \ldots, p_{\frac{q^2 -1}{2}}) \in \left \{ 0, 1 \right \}^{\frac{q^2 +1}{2}}$ and $(\mathcal{M}, \mathcal{C})$ be the M{\"o}bius plane given in \cref{3design}. 
    We construct three truncated M{\"o}bius planes $M^{(\vec{p})}_1$, $M^{(\vec{p})}_2$, and $M^{(\vec{p})}_4$, with the same point set $\mathcal{M}^{(\vec{p})} = \left \{ i + p_i(\frac{q^2 +1}{2}) : 0 \leq i < \frac{q^2 +1}{2} \right \}$, where each corresponds to $\left [ G^{q+1}_{q^2 +1} \right ]_{\Vec{p}}$, $\left [ G^{2(q+1)}_{q^2 +1} \right ]_{\Vec{p}}$, and $\left [ G^{4(q+1)}_{q^2 +1} \right ]_{\Vec{p}}$, respectively. More specifically, 
    \begin{enumerate}
        \item $M_1^{\vec{p}} = (\mathcal{M}^{\Vec{p}}, \{ C \cap \mathcal{M}^{\Vec{p}} : C \in \mathcal{C} \})$. 
        \item $M_2^{\vec{p}} = (\mathcal{M}^{\Vec{p}}, \{ \{ j \in \mathcal{M}^{\Vec{p}} : 2j \in C \} : C \in \mathcal{C} \})$. 
        \item  $M_4^{\vec{p}} = (\mathcal{M}^{\Vec{p}}, \{ \{ j \in \mathcal{M}^{\Vec{p}} : 4j \in C \} : C \in \mathcal{C} \})$.
    \end{enumerate}
    By \cref{3design}, $G_{q^2 +1}^{q+1}$ is in correspondence to $(\mathcal{M}, \mathcal{C})$. Since each $\left [ G^{q+1}_{q^2 +1} \right ]_{\Vec{p}}$, $\left [ G^{2(q+1)}_{q^2 +1} \right ]_{\Vec{p}}$, and $\left [ G^{4(q+1)}_{q^2 +1} \right ]_{\Vec{p}}$ is a subarray of $G_{q^2 +1}^{q+1}$, having half of its columns, then $M^{(\Vec{p})}_{1}$, $M^{(\Vec{p})}_{2}$, $M^{(\Vec{p})}_{4}$ are each a truncated M{\"o}bius plane. 
\end{con}

\begin{thm} \label{orthogovoid} 
     Let $q$ be an odd prime power. $M_1^{(\vec{p})}$, $M_2^{(\vec{p})}$, and $M_4^{(\vec{p})}$ in \cref{mopvec} give an $\ACTM(3,q)$. 
\end{thm}

\begin{proof}

Let $B_{1}$, $B_2$, and $B_4$ be circles of $M_{1}^{\Vec{p}}$, $M_{2}^{\Vec{p}}$, and $M_{4}^{\Vec{p}}$, respectively. Let $I = B_1 \cap B_2 \cap B_4$. We must prove $|I| \leq 3$. Let $d \in I$ and $I_0 = \{ i - d : i \in I \}$; note that $0 \in I_0$. Assume without loss of generality that $|I_0| > 1$. By \cref{shift}, for $i \in B_{l}$, $l = 1, 2, 4$, there exists $x \in D \setminus \{ \frac{(q^2 +1)(q+1)}{2} \}$ such that $li \in C_x$.
Hence, there exist $C_{s}$, $C_{u}$, and $C_{v}$, for $s, u, v \in D \setminus \{ \frac{(q^2 +1)(q+1)}{2} \}$ such that $i \in C_s$, $2i \in C_u$, and $4i \in C_v$ for any $i \in I_0$. Let $I_L = \{ \Omega(L(\alpha^{i(q+1))})):i \in I_0 \}$. From \cref{pshift}, we note that after the shift by $d$, we have changed $\Vec{p}$ to $\Vec{p'}$. 
Thus, by \cref{tracelemma}, $I_L \subseteq O^{(\Vec{p'})} \cap P_a \cap Q_b \cap R_c$, where $P_a \hs ax + a^q y + a^{q^2} z + a^{q^3}t = 0$, $Q_b \hs bx^2 + b^q y^2 + b^{q^2} z^2 + b^{q^3}t^2 = 0$, $R_c \hs cx^4 + c^q y^4 + c^{q^2} z^4 + c^{q^3}t^4 = 0$, and $a = \alpha^s$, $b = \alpha^u$, $c = \alpha^v$. \cref{fullovoid} implies $|I_L| \leq 3$, therefore $|I| =|B_{1} \cap B_2 \cap B_4| \leq 3$. 
\end{proof}

\section{Construction of strength-$4$ covering arrays} \label{sec:4ca3layer}
In this section, we prove the existence of a $\CA(3q^4 -2; 4, \frac{q^2 +1}{2}, q)$ in
\cref{4ca3layer}, which can be considered as an extension of \cref{rap} to higher dimension. 

\begin{thm} \label{4ca3layer}
    Let $q$ be an odd prime power. Let $\vec{p} = (p_0, p_1, \ldots, p_{\frac{q^2 -1}{2}}) \in \left \{ 0, 1 \right \}^{\frac{q^2 +1}{2}}$. Let $\alpha$ be a primitive element in $\mathbb{F}_{q^4}$ and let $A_{1} = A \left (\left [ G^{q+1}_{q^2 +1} \right ]_{\Vec{p}} \right )$, $A_2 = A\left (\left [ G^{2(q+1)}_{q^2 +1} \right ]_{\Vec{p}} \right )$, and $A_4 = A\left (\left [ G^{4(q+1)}_{q^2 +1} \right ]_{\Vec{p}} \right )$ be arrays obtained from $\alpha$ in \cref{Gengen}. The vertical concatenation of $A_{1}$, $A_2$, and $A_4$ with two copies of the all-zero rows removed is a $\CA(3q^4 -2; 4, \frac{q^2 + 1}{2}, q)$. 
\end{thm}

\begin{proof}
    Let $M$ be the vertical concatenation of $A_1$, $A_2$, and $A_4$. Consider any four distinct columns $i, j, k, l$ of $M$. Note that $i, j, k, l$ are elements of the truncated M{\"o}bius planes $M_{1}^{\Vec{p}}$, $M_{2}^{\Vec{p}}$, and $M_{4}^{\Vec{p}}$, corresponding to $A_1$, $A_2$, and $A_4$, respectively. By \cref{orthogovoid}, there exists no circles containing $i, j, k, l$ in at least one of these three M{\"o}bius planes, say $M_x^{\Vec{p}}$ for $x \in \{1, 2, 4 \}$. By \cref{3design},  every row of the corresponding $q^4 \times 4$ submatrix of $A_x$ indexed by column indices $i, j, k, l$ has at most three zeros. By \cref{rank}, all distinct $4$-tuples corresponding to column indices  $i, j, k, l$ are covered in $A_x$ and therefore in $M$.  
\end{proof}

In \cref{table:4ca3layer}, we display the size $N_s$ of the $\CA(3q^4 -2; 4, \frac{q^2 +1}{2}, q)$ obtained by \cref{4ca3layer} and the size $N_c$ of the best-known covering arrays with $k = \frac{q^2 +1}{2}$ obtained from \cite{ctable}, for any odd prime power $q \leq 25$. The column indicated by ``Method'' shows the method by which each of the best-known arrays were obtained. The reference for each method is mentioned. The column indicated by ``$N_s - N_c$'' shows the difference between the size of the array obtained by \cref{4ca3layer} and the size of the best-known covering array in \cite{ctable}. We can see that for $q \geq 11$, we have an improvement of $21 - 25$ percent in the size of covering arrays obtained in \cref{4ca3layer}. 

\begin{table}[h]
\centering
    \scalebox{0.9}{
        $\begin{array}{||c||c|c|c|c|c|c||}
        \hline \hline 
            q & k = \frac{q^2 +1}{2}&\text{CAs from \cref{4ca3layer}} & \multicolumn{2}{|c|}{\text{The previously best-known CAs \cite{ctable}} }  &  & \\
            \hline 
             & &N_s = 3q^4 -2 &  N_c & \text{Method}& N_s - N_c &  (N_s - N_c)/N_c\\
            \hline \hline 
            3 & 5&241 & 81 & \text{Derive from strength 5} & 160 & 1.975\\
            \hline
            5 & 13&1873 &  1225 & \text{2-Restricted SCPHF RE (CL)} \cite{Colbourn2020Lanus} &  648 & 0.528\\
            \hline
            7 & 25 &7201 & 6853 &\text{3-Restricted SCPHF RE (CL)} \cite{Colbourn2020Lanus} & 348 & 0.05\\
            \hline
            9 & 41&19681  & 19593 & \text{2-Restricted SCPHF RE (CL)} \cite{Colbourn2020Lanus} & 88 & 0.004\\
            \hline
            11 & 61&\textbf{43921} & 55891& \text{3,3-Restricted SCPHF RE (CL)} \cite{Colbourn2020Lanus} & -11970 & -0.214\\
            \hline
            13 &  85 &\textbf{85681} & 109837 & \text{3,3-Restricted SCPHF RE (CL)} \cite{Colbourn2020Lanus}& -24156 & -0.219\\
            \hline
            17 &  145&\textbf{250561} & 329137& \text{3-Restricted SCPHF RE (CL) }\cite{Colbourn2020Lanus} & -78576 & -0.238\\
            \hline
            19 & 181 & \textbf{390961}& 520543 & \text{2,2-Restricted SCPHF RE (CL)} \cite{Colbourn2020Lanus}    &  -129582 & -0.248\\
            \hline
            23 & 265 &\textbf{839521} & 1119361 & \text{CPHF IPO 4 (WCS)} \cite{Wagner2022}  & -279840 & -0.249\\
            \hline
            25 & 313&\textbf{1171873} & 1562497 & \text{CPHF IPO 4 (WCS)} \cite{Wagner2022} & -390606 & -0.249\\
            \hline \hline 

        \end{array}$}
        \caption{Covering arrays of strength 4 obtained by \cref{4ca3layer} compared with the previously best-known CAs of strength 4 in \cite{ctable} for odd prime power $q \leq 25$. Improvements in size are shown in bold.}
    \label{table:4ca3layer}
    \end{table}
\begin{cor}
    For an odd prime power $q$, a $\CPHF(3; \frac{q^2+1}{2}, q, 4)$ exists where each row is a $\CPHF(1; \frac{q^2+1}{2}, q, 3)$. 
\end{cor}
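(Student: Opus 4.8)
The plan is to read the covering perfect hash family off the three generator matrices already used in \cref{4ca3layer}. First I would let $\mathcal{H}$ be the $3\times\frac{q^2+1}{2}$ array whose three rows are the columns of $G^{q+1}_{\frac{q^2+1}{2}}$, $G^{2(q+1)}_{\frac{q^2+1}{2}}$, and $G^{4(q+1)}_{\frac{q^2+1}{2}}$, with the columns labelled by the even integers $0,2,\dots,q^2-1$ exactly as in \cref{truncated}. Every entry of $\mathcal{H}$ is a column $L(\alpha^{j})\in\mathbb{F}_q^4\setminus\{0\}$, i.e.\ a point of $PG(3,q)$, so $\mathcal{H}$ has the correct shape for a CPHF with $t=4$.

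Next I would establish the strength-$4$ property. Fix any four columns $T=\{i_1<i_2<i_3<i_4\}$ of $\mathcal{H}$. The argument in the proof of \cref{4ca3layer} shows that if all three $4\times 4$ submatrices of $G^{q+1}_{\frac{q^2+1}{2}}$, $G^{2(q+1)}_{\frac{q^2+1}{2}}$, $G^{4(q+1)}_{\frac{q^2+1}{2}}$ on the columns of $T$ had rank $3$, then via \cref{tracelemma} the four corresponding points would lie in $O^{(e)}\cap H\cap P\cap Q$ for the hypersurfaces $H$, $P$, $Q$ attached to these generators, contradicting $|O^{(e)}\cap H\cap P\cap Q|\le 3$ from \cref{max3H}. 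Hence in at least one row of $\mathcal{H}$ the entries indexed by $T$ are linearly independent, which is precisely the defining condition of a CPHF$(3;\frac{q^2+1}{2},q,4)$; \cref{cphftoCA} then returns the CA$(3q^4-2;4,\frac{q^2+1}{2},q)$ of \cref{4ca3layer}.

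Finally I would check the per-row claim. For each $l\in\{q+1,2(q+1),4(q+1)\}$ the corresponding row of $\mathcal{H}$ is $\{L(\alpha^{li}):0\le i<\frac{q^2+1}{2}\}$, a set of distinct points whose exponents lie in $\{(q+1)y:0\le y<q^2+1\}$, so it is a subset of the ovoid $O$ of \cref{genovoid}. An ovoid is a cap, so no three of its points are collinear; therefore, within a single row, every three entries are linearly independent, i.e.\ each row is on its own a CPHF$(1;\frac{q^2+1}{2},q,3)$.

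I do not expect a genuine obstacle: the corollary merely repackages \cref{4ca3layer} (for the strength-$4$ separation) and \cref{genovoid} (for the strength-$3$ property of a single row) into the language of covering perfect hash families, so the only real content was already proved. The one point needing a line of care is confirming that the three index sets $\{\,li\bmod (q^2+1):0\le i<\frac{q^2+1}{2}\,\}$ each consist of distinct residues contained in the exponent set of $O$, which is immediate once one notes that $q$ odd forces $\gcd(4,q^2+1)=2$ and $\frac{q^2+1}{2}$ odd.
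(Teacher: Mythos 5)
Your proposal is correct and takes essentially the same route as the paper, which simply declares the corollary ``immediate by \cref{4ca3layer}'': the strength-$4$ separation is exactly the rank argument via \cref{tracelemma} and \cref{max3H} from that theorem's proof, and the per-row strength-$3$ property is the cap property of the ovoid from \cref{genovoid}. You merely make explicit the details (including the check that each exponent set $\{li \bmod (q^2+1)\}$ consists of distinct ovoid indices) that the paper leaves implicit.
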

\begin{proof}
    The proof is immediate by \cref{4ca3layer}. 
\end{proof}

The generator matrices $G^{q+1}_{\frac{q^2 +1}{2}}$, $G^{2(q+1)}_{\frac{q^2 +1}{2}}$, and $G^{4(q+1)}_{\frac{q^2 +1}{2}}$ with respect to the primitive polynomial $f(x) = x^4 + 5x^2 + 4x + 3$ over $\mathbb{F}_7$ is given in \cref{fig:G1G2G4}. Each of $G^{q+1}_{\frac{q^2 +1}{2}}$, $G^{2(q+1)}_{\frac{q^2 +1}{2}}$, and $G^{4(q+1)}_{\frac{q^2 +1}{2}}$ corresponds to a row of a $\CPHF(3; 25, 7, 4)$, and each column vector of these arrays, correspond to an entry in the corresponding column of the $\CPHF(3; 25, 7, 4)$. 
\begin{figure}[h]
    \centering
    \scalebox{0.7}{
    \begin{tabular}{cc}
    $G^{q+1}_{\frac{q^2 +1}{2}} =$ & $\begin{bmatrix}
    \begin{array}{ccccccccccccccccccccccccc}
0 & 5 & 1 & 4 & 2 & 1 & 0 & 5 & 0 & 6 & 0 & 1 & 6 & 1 & 6 & 6 & 0 & 4 & 6 & 3 & 1 & 0 & 6 & 2 & 3 \\
0 & 5 & 6 & 6 & 4 & 5 & 1 & 6 & 2 & 0 & 6 & 1 & 4 & 0 & 5 & 3 & 5 & 5 & 5 & 5 & 1 & 6 & 0 & 6 & 3 \\
0 & 1 & 1 & 1 & 5 & 5 & 3 & 1 & 0 & 1 & 3 & 6 & 3 & 1 & 2 & 0 & 5 & 0 & 4 & 6 & 5 & 2 & 3 & 3 & 0 \\
1 & 4 & 6 & 5 & 0 & 2 & 0 & 3 & 1 & 2 & 5 & 4 & 4 & 0 & 1 & 2 & 3 & 2 & 4 & 5 & 1 & 0 & 6 & 5 & 1
\end{array}
\end{bmatrix}$ \\
    \vspace{0.3cm}
    \\
    $G^{2(q+1)}_{\frac{q^2 +1}{2}} =$ & $\begin{bmatrix}
    \begin{array}{ccccccccccccccccccccccccc}
0 & 1 & 2 & 0 & 0 & 0 & 6 & 6 & 0 & 6 & 1 & 6 & 3 & 3 & 1 & 1 & 1 & 4 & 1 & 2 & 5 & 3 & 5 & 4 & 6 \\
0 & 6 & 4 & 1 & 2 & 6 & 4 & 5 & 5 & 5 & 1 & 0 & 3 & 6 & 6 & 6 & 0 & 3 & 5 & 6 & 0 & 6 & 5 & 3 & 3 \\
0 & 1 & 5 & 3 & 0 & 3 & 3 & 2 & 5 & 4 & 5 & 3 & 0 & 5 & 5 & 2 & 0 & 2 & 0 & 4 & 1 & 3 & 3 & 0 & 6 \\
1 & 6 & 0 & 0 & 1 & 5 & 4 & 1 & 3 & 4 & 1 & 6 & 1 & 2 & 2 & 6 & 4 & 1 & 1 & 5 & 4 & 4 & 6 & 4 & 4
\end{array}
\end{bmatrix}$ \\
    
    \vspace{0.3cm}
    \\
  $G^{4(q+1)}_{\frac{q^2 +1}{2}} =$ & $\begin{bmatrix}
   \begin{array}{ccccccccccccccccccccccccc}
0 & 2 & 0 & 6 & 0 & 1 & 3 & 1 & 1 & 1 & 5 & 5 & 6 & 3 & 0 & 0 & 4 & 4 & 4 & 2 & 3 & 5 & 6 & 2 & 5 \\
0 & 4 & 2 & 4 & 5 & 1 & 3 & 6 & 0 & 5 & 0 & 5 & 3 & 4 & 3 & 4 & 1 & 1 & 0 & 4 & 4 & 2 & 4 & 4 & 2 \\
0 & 5 & 0 & 3 & 5 & 5 & 0 & 5 & 0 & 0 & 1 & 3 & 6 & 3 & 2 & 2 & 6 & 5 & 2 & 1 & 6 & 6 & 5 & 2 & 0 \\
1 & 0 & 1 & 4 & 3 & 1 & 1 & 2 & 4 & 1 & 4 & 6 & 4 & 4 & 0 & 1 & 3 & 5 & 4 & 6 & 4 & 3 & 1 & 5 & 5
\end{array}
\end{bmatrix}$ \\
    
    \end{tabular}}
    \caption{The generator matrices $G^{q+1}_{\frac{q^2 +1}{2}}$, $G^{2(q+1)}_{\frac{q^2 +1}{2}}$, and $G^{4(q+1)}_{\frac{q^2 +1}{2}}$ with respect to the primitive polynomial $f(x) = x^4 + 5x^2 + 4x + 3$ over $\mathbb{F}_7$.}
    \label{fig:G1G2G4}
\end{figure}

\section{A recursive construction of strength-$4$ covering arrays using all points of the ovoid} \label{sec:rec4-ca}

In this section, we construct a strength-$4$ covering array using a recursive method. The main ingredients are the $\CA(q^4; 3, q^2 +1, q)$ in \cref{3-ca} and the $\CA(3q^4-1; 4, \frac{q^2 +1}{2}, 1)$ in \cref{4ca3layer} for an odd prime power $q$.

\begin{nota} \label{added}
    Let $A = (a_{ij}) \in \mathbb{F}_q^{m \times n}$ and let $x \in \mathbb{F}_q$. Denote by $A + x \in \mathbb{F}_q^{m \times n}$ the matrix $B = (b_{ij})$ with $b_{ij} = a_{ij} + x$.
\end{nota}

\begin{thm} \label{4ca-rec}
	Let $q$ be an odd prime power. Suppose there exists a $\CA(N; 3, \frac{q^2+1}{2}, q)$. Then a $\CA(3q^4 + N(q-2); 4, q^2 +1, q)$ exists.  
\end{thm}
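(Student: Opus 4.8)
The plan is to exhibit the array as a vertical concatenation of a \emph{base} block of $3q^4$ rows and a \emph{patch} block of $N(q-2)$ rows. Throughout I fix the partition of the $q^2+1$ column indices into the even class $E=\{0,2,\dots,q^2-1\}$ — on which \cref{4ca3layer} already yields strength $4$ — and the odd class $O=\{1,3,\dots,q^2\}$, together with the obvious bijections $E\leftrightarrow\{0,\dots,\tfrac{q^2-1}{2}\}\leftrightarrow O$ given by $2i\mapsto i$ and $2i+1\mapsto i$.

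For the base block I would stack $A(G^{q+1}_{q^2+1})$, $A(G^{2(q+1)}_{q^2+1})$ and $A(G^{4(q+1)}_{q^2+1})$ on the full index set; this has $3q^4$ rows and, by \cref{3-ca}, each layer is an $\mathrm{OA}_q(q^4;3,q^2+1,q)$, so the stack is already strength $3$ on all $q^2+1$ columns. The first step is to upgrade this to strength $4$ on every $4$-set of columns whose indices lie entirely in $E$ or entirely in $O$. Given such a $4$-set, replace its generator columns by $L(\alpha^0)$ and the $L(\alpha^{(c_j-c_1)(q+1)})$ — the ``divide by the first column'' reduction used repeatedly in \cref{sec:orthogovoid,sec:4ca3layer}, which preserves linear (in)dependence and hence coverage in every layer at once — and note that the shifted indices $0,c_2-c_1,c_3-c_1,c_4-c_1$ all lie in $\mathcal M^{(e)}$ because the $c_j$ share a parity. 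If the $4$-set were uncovered in all three layers, then through \cref{3design,truncated} the corresponding four ovoid points would be cocircular simultaneously in $M_1$, $M_2$ and $M_{1/2}$, contradicting \cref{orthogovoid} (equivalently \cref{max3H}); hence the base block covers every ``monochromatic'' $4$-set of columns, every triple of columns, and (as I expect will be needed) every $4$-set that is \emph{not} simultaneously cocircular in the three layers.

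For the patch block I would take a $\mathrm{CA}(N;3,\tfrac{q^2+1}{2},q)$ with rows $\mathbf r_1,\dots,\mathbf r_N$, and for each row $\mathbf r_i$ and each scalar $s$ in a carefully chosen $(q-2)$-element subset $S\subseteq\mathbb F_q$ form a row of width $q^2+1$ equal to $\mathbf r_i$ on $E$ and to $\mathbf r_i+s$ on $O$ (in the sense of \cref{added}); this is $N(q-2)$ rows, for a total of $3q^4+N(q-2)$. It then remains to check that the two blocks together cover every $4$-set of columns meeting both $E$ and $O$. I would organise this by the split type ($3$--$1$, $2$--$2$, $1$--$3$) and by whether the $4$-set contains a ``cross pair'' $\{2i,2i+1\}$ (mapping to a single column of the patch ingredient). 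In the configurations with four distinct patch-columns, a target tuple is produced by a single patch row: its $E$-coordinates come from strength $3$ of the $\mathrm{CA}(N;3,\cdot)$ on its at most three $E$-columns, and the common shift $s\in S$ then realises the $O$-coordinates because the constraint it must satisfy there is invariant under translating all of $O$; the two scalars left out of $S$ can be chosen to be exactly the two values for which that $O$-constraint is instead met already inside the base block (the ``$s=0$'' diagonal value, plus one further value dictated by the cocircularity dependency), so nothing is lost. The cross-pair configurations — above all the ``double cross pair'' $\{2i,2j,2i+1,2j+1\}$, in which columns $2i$ and $2i+1$ (resp.\ $2j$ and $2j+1$) differ by exactly $s$ in every patch row, so that a tuple whose two differences disagree can only come from the base block — would be settled by an incidence lemma, in the spirit of \cref{foldB,nodoubleCx,nodoublethree}, showing that no such $4$-set is simultaneously cocircular in all three layers and is therefore covered in full by the base.

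I expect the main obstacle to be precisely this last case analysis: reconciling what the shift mechanism of the patch delivers with what the base block delivers on the ``bad'' mixed $4$-sets, and in particular proving the supporting incidence lemmas that force those bad mixed $4$-sets to be restricted enough that exactly $q-2$ — rather than $q-1$ — shift values suffice. As everywhere in \cref{sec:orthogovoid}, these lemmas should reduce to exponent arithmetic over $\mathbb F_{q^4}$ of the type in \cref{dfs,traceprop}, with the parity of $\tfrac{q^2+1}{2}$ (which is odd) again playing a decisive role. Once all cases check out, the concatenated array has strength $4$ on all $\binom{q^2+1}{4}$ column quadruples, giving the claimed $\mathrm{CA}(3q^4+N(q-2);4,q^2+1,q)$.
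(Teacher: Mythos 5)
Your overall architecture --- a $3q^4$-row base plus $q-2$ shifted copies $[R \mid R+s]$ of a strength-$3$ ingredient, with the case analysis organised around cross pairs $\{i,\,i+\tfrac{q^2+1}{2}\}$ --- matches the paper's. But there are two concrete gaps. First, your base block is wrong. You stack $A(G^{2(q+1)}_{q^2+1})$ and $A(G^{4(q+1)}_{q^2+1})$ on the full index set and assert each is an OA$_q(q^4;3,q^2+1,q)$ by \cref{3-ca}; that is false. Since $\alpha^{(q+1)(q^2+1)}=\beta$ is a (primitive) element of $\mathbb{F}_q$, column $i+\tfrac{q^2+1}{2}$ of $G^{2(q+1)}_{q^2+1}$ equals $\beta$ times column $i$, so these two layers have only $\tfrac{q^2+1}{2}$ distinct projective points each repeated as a scalar multiple, and on a cross pair they realise only the pairs $(a,\beta a)$ and $(a,\beta^2 a)$. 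Consequently, on a $4$-set consisting of a cross pair plus two further columns whose four ovoid points are coplanar in the first layer, the tuples with $x_4-x_1\in\mathbb{F}_q\setminus S$ number $2q^3$, while your first layer contributes at most $2q^2$ of them and layers $2$ and $3$ at most $2q^2$ each; the construction is not a covering array for any odd $q\geq 3$. The paper instead takes the second and third row blocks to be $[A(G^{2(q+1)}_{\frac{q^2+1}{2}})\mid A(G^{2(q+1)}_{\frac{q^2+1}{2}})]$ and $[A(G^{4(q+1)}_{\frac{q^2+1}{2}})\mid A(G^{4(q+1)}_{\frac{q^2+1}{2}})+e^0]$: exact duplication and an additive shift by $1$, so the cross-pair differences $0$ and $1$ come for free from the base and only the remaining $q-2$ nonzero differences must be supplied by the $R$ blocks. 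This is exactly where the factor $q-2$ (rather than $q$) comes from, and it cannot be recovered from your multiplicatively scaled layers.

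Second, your routing of the mixed $4$-sets with four distinct underlying indices through the patch cannot work: a patch row restricted to such columns reads $(\mathbf r(i_1)+p_1 s,\dots,\mathbf r(i_4)+p_4 s)$ with $\mathbf r$ of strength only $3$ and a single free parameter $s$, so four independent target values cannot be prescribed. In the paper these $4$-sets are covered entirely by the base block: the point is that \cref{fullovoid} (not merely \cref{max3H}) bounds $|O^{(\vec p)}\cap P_a\cap Q_b\cap R_c|\leq 3$ for an \emph{arbitrary} sign vector $\vec p$, which is exactly what is needed because your ``divide by the first column'' reduction sends a mixed $4$-set to indices that do not lie in a single parity class. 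You would also need to note (as the paper does) that the additive shifts by $p_x e^0$ in the third row block do not disturb coverage because $\mathbb{F}_q^4$ is closed under translation. With those two repairs --- the duplicated-and-shifted half-ovoid arrays in the base, and \cref{fullovoid} for all $4$-sets with distinct underlying indices --- your outline, together with \cref{foldB} for the double-cross-pair case, becomes the paper's proof.
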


\begin{proof}
	Let $A (G^{q+1}_{q^2 +1})$ be the $\CA_{q}(q^4; 3, q^2 +1, q)$ in \cref{3-ca}, and $A(G^{2(q+1)}_{\frac{q^2 +1}{2}})$ and $A(G^{4(q+1)}_{\frac{q^2 +1}{2}})$ be two $\CA_{q}(q^4; 3, \frac{q^2 +1}{2}, q)$, which are subarrays of $A (G^{q+1}_{q^2 +1})$. Note that $ \left [A(G_{q^2 +1}^{2(q+1)})\right ]  =\left [ A(G^{2(q+1)}_{\frac{q^2 +1}{2}}) | A(G^{2(q+1)}_{\frac{q^2 +1}{2}}) \right ]$ and $\left [A(G_{q^2 +1}^{4(q+1)})\right ] = \left [ A(G^{4(q+1)}_{\frac{q^2 +1}{2}}) | A(G^{4(q+1)}_{\frac{q^2 +1}{2}}) \right ]$. Let $R$ be a $\CA(N; 3, \frac{q^2 +1}{2}, q)$. Let $X$ be the $3q^4 + N(q-2) \times (q^2 +1)$ array described via its subarrays in \cref{tab:4ca-rec}, displayed using $q+1$ row blocks labeled from $0$ to $q$. We show $X$ is a $\CA(3q^4 + N(q-2); 4, q^2 +1, q)$.
	
	\begin{table}[h]
		\centering
		$ X = \begin{array}{|c|c|}
			\hline
			\multicolumn{2}{|c|}{A(G^{q+1}_{q^2+1})} \\
			\hline
			A(G^{2(q+1)}_{\frac{q^2 +1}{2}}) & A(G^{2(q+1)}_{\frac{q^2 +1}{2}}) \\
			\hline
			A(G^{4(q+1)}_{\frac{q^2 +1}{2}})& A(G^{4(q+1)}_{\frac{q^2 +1}{2}}) + e^{0} \\
			\hline
			R & R + e^{1} \\
			\hline 
			R & R + e^{2} \\
			\hline 
			
			\vdots & \vdots \\
			\hline
			R & R + e^{q-2} \\
			\hline 
		\end{array}$
		\caption{A $\CA(3q^4 + N(q-2); 4, q^2 +1, q)$ constructed in \cref{4ca-rec}.}
		\label{tab:4ca-rec}
	\end{table}
	
		Every column $c$ of $X$ can be represented as $(x_c, p_c)$ where $c = x_c + p_c(\frac{q^2 +1}{2})$ and $0 \leq x_c < \frac{q^2 +1}{2}$, $p_c \in \left \{ 0, 1\right \}$. Let $\lbrace c_1, c_2, c_3, c_4 \rbrace$ be a set of four distinct column indices of $X$ represented in this way by $\{(i, p_{c_1}), (j, p_{c_2}), (k, p_{c_3}), (l, p_{c_4}) \}$, respectively.  We analyze the three possible cases based on the multiset $\{i, j, k, l\}$, which by construction has elements of multiplicity at most $2$, since $p_{c_i} \in \{ 0, 1\}$; these cases are listed next:

	\begin{enumerate}
		\item $i, j, k, l$ all distinct; \label{case1}

		\item $l = i $; $i, j, k$ distinct; \label{case2}

		\item  $i = k, j = l$, $0 \leq i < j < \frac{q^2 +1}{2}$. \label{case3}
		
	\end{enumerate}

	\textbf{Case~\ref{case1}:} 
		In this case, we show that the coverage of $c_1, c_2, c_3, c_4$ is guaranteed by the first three row blocks of $X$.
		Since $i, j, k,l$ are all distinct, there exists a $\Vec{p} \in \lbrace 0, 1 \rbrace^{\frac{q^2 +1}{2}}$ such that $p_i = p_{c_1}$, $p_j = p_{c_2}$, $p_k = p_{c_3}$, and $p_l = p_{c_4}$.   
		By \cref{4ca3layer}, the $4 \times 4$ submatrix corresponding to these column indices has rank equal to $4$ in at least one of $ \left [ G^{q+1}_{q^2 +1} \right ]_{\Vec{p}}$, $ \left [ G^{2(q+1)}_{q^2 +1} \right ]_{\Vec{p}}$, and $\left [ G^{4(q+1)}_{q^2 +1} \right ]_{\Vec{p}}$. This means that the set of columns $S = \{ i, j, k, l \}$ is covered is in at least one of  $A \left (\left [ G^{q+1}_{q^2 +1} \right ]_{\Vec{p}} \right )$, $A \left (\left [ G^{2(q+1)}_{q^2 +1} \right ]_{\Vec{p}} \right )$, and $A \left (\left [ G^{4(q+1)}_{q^2 +1} \right ]_{\Vec{p}} \right )$. If $S$ is covered in one of the first two arrays, then so is $\{c_1, c_2, c_3, c_4 \}$ in $X$. In the case that $S$ is only covered in $A \left (\left [ G^{4(q+1)}_{q^2 +1} \right ]_{\Vec{p}} \right )$, we need to verify $\{c_1, c_2, c_3, c_4 \}$ is covered in the third row block of $X$, where in the second half, $e^0 = 1$ has been added to every element. Indeed, since the set of covered tuples in the submatrix of $\left [ A(G^{4(q+1)}_{\frac{q^2 +1}{2}}) | A(G^{4(q+1)}_{\frac{q^2 +1}{2}}) \right ]$ indexed by $\{c_1, c_2, c_3, c_4 \}$ is $\mathbb{F}_{q}^4$, then the set of tuples in the submatrix of $\left [ A(G^{4(q+1)}_{\frac{q^2 +1}{2}}) | A(G^{4(q+1)}_{\frac{q^2 +1}{2}}) + e^0 \right ]$ is
		\[
		\mathbb{F}_q^4 + (p_i, p_j, p_k, p_l) = \{ (a, b, c, d) + (p_i, p_j, p_k, p_l) : (a, b, c, d) \in \mathbb{F}_q^4 \} = \mathbb{F}_q^4.
		\]
		This means $\{c_1, c_2, c_3, c_4 \}$ is covered in $X$. Therefore, one of the first three row blocks of $X$ guarantees coverage of all distinct four-tuples for $c_1$, $c_2$, $c_3$, $c_4$ in $X$.   
		
		\textbf{Case~\ref{case2}:}
		Let $\mathbb{F}_{q} = \{ x_1, x_2, \ldots, x_q \}$ where $x_1 = 0$, $x_r = e^{r-2}$ for $2 \leq r \leq q$. The set $S_r = \lbrace (a, b + p_{c_2}x_r , c +p_{c_3}x_r, a + x_r): a, b, c \in \mathbb{F}_q, p_{c_2}, p_{c_3} \in \lbrace 0, 1 \rbrace \rbrace$ is the set of four-tuples covered in row blocks $1 \leq r \leq q$ in columns $c_i, c_j, c_k, c_l$.  First note that $|S_r| = q^3$, for all $1 \leq r \leq q$. We will show $S_r \cap S_{r'} = \emptyset$, whenever $r \neq r'$. 
		Let $1 \leq r , r^{'} \leq q$ and $(a, b, c, d) \in S_r \cap S_{r'}$. Then $(a, b, c, a + x_r) = (a, b, c, a + x_{r'})$ which implies $r = r'$. Hence, $S_{r} \cap S_{r'} = \emptyset$, whenever $r \neq r'$. Let $S = \bigcup_{1 \leq r \leq q} S_r$. Then $|S| = q^4$, which implies $S = \mathbb{F}_q^4$. Thus, all distinct four-tuples are covered for $c_1$, $c_2$, $c_3$, $c_4$ in $X$. 
		
		\textbf{Case~\ref{case3}:}
		We claim that the $4 \times 4$ submatrix of $G^{q+1}_{q^2 +1}$ in the row block zero and corresponding to column indices $i, j, i + \frac{q^2 +1}{2}, j + \frac{q^2 +1}{2}$ has rank equal to $4$. 
		Indeed, \cref{foldB} shows that no blocks contain $i, j, i + \frac{q^2 +1}{2}, j + \frac{q^2 +1}{2}$ in the M{\"o}bius plane from \cref{3design}. Therefore, the corresponding points in the ovoid are not co-planar, and the rank of the $4 \times 4$ submatrix is four. Thus, all distinct four-tuples are covered in the row block zero. 
\end{proof}

\begin{rem}
    The first three row blocks of the array $X$ in \cref{4ca-rec} give the coverage for all column indices $i, j, k, l$ except when exactly three of them are distinct. This shows that the array obtained by the first three row blocks is a strength-$3$ covering array and ``almost'' a strength-$4$ covering array. 
\end{rem}

\begin{rem} \label{tor}
    There exists a $\CA(2q^3-q; 3, \frac{q^2 +1}{2}, q)$ for any prime power $q$. This has the best-known size for $\frac{q^2 +1}{2}$ in \cite{ctable}. This covering array can be obtained by using the first $\frac{q^2 +1}{2}$ columns of the $\CA(2q^3-q; 3, q^2-q +3, q)$ from \cref{rap-tor}. 
\end{rem}

\begin{cor} \label{4ca-recpro}
    For an odd prime power $q$, there exists a $\CA(3q^4 + (q-2)(2q^3-q); 4, q^2 +1, q)$. 
\end{cor}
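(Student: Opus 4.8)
The plan is to instantiate \cref{4ca-rec} with the smallest currently available strength-$3$ ingredient on $\frac{q^2+1}{2}$ columns. By \cref{4ca-rec}, for any odd prime power $q$, a CA$(N; 3, \frac{q^2+1}{2}, q)$ yields a CA$(3q^4 + N(q-2); 4, q^2+1, q)$, so it suffices to supply a concrete $N$.

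First we invoke \cref{tor}, which provides a CA$(2q^3 - q; 3, \frac{q^2+1}{2}, q)$ for every prime power $q$: this array is obtained by restricting the CA$(2q^3-q; 3, q^2-q+3, q)$ of \cref{rap-tor} to its first $\frac{q^2+1}{2}$ columns, which is legitimate because $q^2 - q + 3 - \frac{q^2+1}{2} = \frac{(q-1)^2+4}{2} > 0$. Taking $N = 2q^3 - q$ and substituting into \cref{4ca-rec} gives a CA$(3q^4 + (q-2)(2q^3-q); 4, q^2+1, q)$, as claimed.

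There is no genuine obstacle here; the statement is simply the best current instantiation of \cref{4ca-rec}. Any future improvement to the strength-$3$ covering array number for $k = \frac{q^2+1}{2}$ columns would immediately sharpen the size in \cref{4ca-recpro} via the same substitution.
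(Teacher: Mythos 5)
Your proof is correct and follows exactly the paper's own route: take the CA$(2q^3-q;\,3,\,\frac{q^2+1}{2},\,q)$ from \cref{tor} and substitute $N=2q^3-q$ into \cref{4ca-rec}. The extra check that $q^2-q+3 \geq \frac{q^2+1}{2}$ (so the column restriction in \cref{tor} is valid) is a small, correct addition that the paper leaves implicit.
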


\begin{proof}
    The result is obtained by using the $\CA(2q^3 -q; 3, \frac{q^2 +1}{2}, q)$ from \cref{tor} in \cref{4ca-rec}. 
\end{proof}

In \cref{tab:4ca-rec-impr}, we compare the size $N_s$ of the strength-$4$ covering array obtained in \cref{4ca-rec}, namely $N_s = 3q^4 + (2q^3 -q)(q-2)$, with $N_c$, the size of the best-known strength-$4$ covering array with $k = q^2 +1$ columns, and $q$ symbols found in \cite{ctable} as shown in \cref{4ca-recpro}. For odd prime powers $q \geq 11$, \cref{4ca-recpro} improves bounds.  

\begin{table}
    \scalebox{0.78}{
        $\begin{array}{||c||c||c||c|c||c|c||}
        \hline \hline 
            q & k = q^2 +1& \text{Obtained by \cref{4ca-recpro}} & \multicolumn{2}{|c|}{\text{The previously best-known CAs \cite{ctable}} }  &  & \\
            \hline 
             & &N_s = 3q^4 + (2q^3-q)(q-2)  &  N_c & \text{Method}& N_s - N_c &  (N_s - N_c)/N_c\\
            \hline \hline 
            3 & 10 & 294 & 159 & \text{CPHF 3-stage (TJ-IM)} \cite{Torres-Jimenez2018cphf3stage} & 135 &0.849 \\
            \hline
            5 & 26 & 2610 & 1865 &	\text{Restricted CPHF Sim Annealing (TJ-IM)}  \cite{ctable} & 745 &0.399  \\
            \hline
            7 & 50& 10598  & 9247	& \text{3-Restricted SCPHF RE (CL)} \cite{Colbourn2020Lanus}& 1351& 0.146 \\
            \hline
            9 & 62& 29826 & 26241 &	\text{CPHF IPO 4 (WCS)} \cite{Wagner2022} &  3585 & 0.136\\
            \hline
            11 &122& \textbf{67782} & 70521 &	\text{3,3-Restricted SCPHF RE (CL)} \cite{Colbourn2020Lanus} & -2739 & -0.038 \\
            \hline
            13 &170& \textbf{133874}& 138385 &	\text{3,3-Restricted SCPHF RE (CL)} \cite{Colbourn2020Lanus} &  -4511 & -0.032 \\
            \hline
            17 &290& \textbf{397698}  & 412369 &	\text{3,2-Restricted SCPHF RE (CL)} \cite{Colbourn2020Lanus} &  -14671 & -0.035 \\
            \hline
            19 &362& \textbf{623846}  & 644347&	\text{3,2-Restricted SCPHF RE (CL)} \cite{Colbourn2020Lanus}  &   -20501 & -0.031 \\
            \hline
            23 &530&\textbf{1350054} & 1398101 &	\text{2,2-Restricted SCPHF RE (CL)} \cite{Colbourn2020Lanus}&  -48047 & -0.034 \\
            \hline
            25 &626&\textbf{1890050} & 1951825 &	\text{2,2-Restricted SCPHF RE (CL)} \cite{Colbourn2020Lanus} &  -61775 & -0.031 \\
            \hline \hline 

        \end{array}$}
        \caption{Covering arrays of strength 4 obtained by \cref{4ca-recpro} compared with the previously best-known CAs of strength 4 in \cite{ctable} for odd prime power $q \leq 25$. Improvements in rows are shown in bold.}
    \label{tab:4ca-rec-impr}
    \end{table}

\section{Conclusion and future work} \label{sec:future}
In this paper, we proved the existence of three truncated M{\"o}bius planes for any odd prime power $q$, where for any choice of blocks from each plane, the common intersection size is at most three. This result is a generalization of the existence of orthogoval planes studied in  \cite{Baker1994,Bruck1973,Colbourn2022Brett,Glynn1978,Hall1947,Dieter1984,Panario2020,Raaphorst2014,Veblen}, and played a key role to build a $\CA(3q^4-2; 4, \frac{q^2 +1}{2}, q)$ in \cref{4ca3layer}. The $\CA(2q^3-1; 3, q^2+q+1, q)$ constructed by Raaphorst et al. \cite{Raaphorst2014}, which is obtained from orthogoval projective planes, improved the size of the best-known arrays by almost 33 percent and are still the best-known covering arrays in \cite{ctable} for their number of columns. Similarly, for odd prime power $q \geq 11$, we obtain an improvement by almost 25 percent in \cref{4ca3layer} on the size of the previously best-known strength-$4$ covering arrays with the same parameters. 
This shows that constructive methods using structures from geometry can make a significant improvement in the size of covering arrays. These two constructions for $t = 3, 4$ strongly suggest continuing this approach on the strength-$5$ covering arrays. There are two examples that are obtained by vertical concatenation of two strength-$4$ covering arrays, which suggests this structure; the first is a $\CA(485; 5, 11, 3)$ \cite{Tzanakis2016}, the second one is a $\CA(6249; 5, 11, 5)$, which we found using experiments. These suggest the construction of  strength-$5$ covering arrays by the vertical concatenation of four strength-$4$ covering arrays for any prime power $q$. Understanding the geometry in these two examples and finding such a construction is our plan for future work. 

We used the $\CA(3q^4 -2; 4, \frac{q^2 + 1}{2}, q)$ in a recursive construction as the main ingredient to double the number of columns and less than double the number of rows, constructing a $\CA(3q^4 + (q-2)(3q^3 -q), 4, q^2 +1, q)$ for an odd prime power $q$ (\cref{4ca-recpro}). For $q \geq 11$, these covering arrays improve the size of the best-known arrays with the same parameters by around $3$ percent. New families of strength-$3$ covering arrays were constructed in \cite{SM2024} by horizontal concatenation of $x$ copies of $\CA(2q^3-1; 3, q^2 + q + 1, 3)$ where $ x \in \lbrace 2, q, q^2, q^2-q+1 \rbrace$ for a prime power $q$. The next step is to leverage our geometric construction with recursive methods using more than two copies of $\CA(3q^4-2; 4, \frac{q^2+1}{2}, q)$. Investigating the possibility of effective approaches to construct strength-$4$ covering arrays using recursive methods is another of our plans for the future. 

The first clue leading us to construct the $\CA(3q^4-2; 4, \frac{q^2+1}{2}, q)$ in \cref{4ca3layer} for an odd prime power $q$ was the existence of the $\CA(511; 4, 17, 4)$ obtained by Tzanakis et al. \cite{Tzanakis2016} where $q = 4$. Using experimental approaches, we obtained a $\CA(262141; 4, 257, 16)$ by the vertical concatenation of four strength-$3$ covering arrays employing all $16^2 +1$ points of the ovoid. This observation strengthens the idea of finding a similar construction for even prime powers, which is also one of our future work plans.
\bibliography{Ref}

\end{document}